\documentclass[reqno]{amsart}

\usepackage{amssymb}
\usepackage{graphicx}
\usepackage{color}
\usepackage{subfigure}
\usepackage[all]{xy}
\usepackage{calc}
\usepackage{extarrows}
\usepackage{a4wide}
\usepackage{hyperref}
\hypersetup{colorlinks=true,linkcolor=blue}

\usepackage{mathtools}
\usepackage{soul}
\usepackage{marginnote}

\pdfsuppresswarningpagegroup=1
\pdfminorversion=4

\definecolor{gray}{gray}{0.5}

\theoremstyle{plain}
\newtheorem{thm}{Theorem}[section]
\newtheorem{lem}[thm]{Lemma}
\newtheorem{prop}[thm]{Proposition}
\newtheorem{cor}[thm]{Corollary}

\theoremstyle{definition}
\newtheorem{defn}[thm]{Definition}
\newtheorem{ex}[thm]{Example}

\theoremstyle{remark}
\newtheorem{rmk}[thm]{Remark}

\newcommand{\R}{\mathbb{R}}

\newcommand{\G}{\mathcal{G}}
\newcommand{\B}{\mathcal{B}}
\newcommand{\T}{\mathcal{T}}
\renewcommand{\L}{\mathcal{L}}
\newcommand{\K}{\mathcal{K}}

\newcommand{\SG}{\mathcal{SG}}
\newcommand{\SB}{\mathcal{SB}}
\newcommand{\ST}{\mathcal{ST}}
\newcommand{\SL}{\mathcal{SL}}
\newcommand{\SK}{\mathcal{SK}}
\newcommand{\SC}{\mathcal{SC}}

\renewcommand{\tilde}{\widetilde}
\renewcommand{\hat}{\widehat}
\renewcommand{\bar}{\overline}

\begin{document}
\title{Grid diagram for singular links}
\author[B.~H.~An]{Byung Hee An}
\address{Center for Geometry and Physics, Institute for Basic Science (IBS), Pohang 37673, Republic of Korea}
\email{anbyhee@ibs.re.kr}
\thanks{The first author was supported by IBS-R003-D1.}

\author[H.~J.~Lee]{Hwa Jeong Lee}
\address{School of Undergraduate Studies, College of Transdisciplinary Studies, DGIST, Daegu 42988, Republic of Korea}
\email{hjwith@dgist.ac.kr}
\thanks{The second author was supported by the National Research Foundation of Korea(NRF) grant funded by the Korea government(MSIP : Ministry of Science, ICT $\&$ Future Planning) (No. 2015R1C1A2A01054607)}

\subjclass[2010]{57M25} 
\keywords{singular grid diagram, singular links, singular braids, singular Legendrian links, singular transverse links}

\date{\today}

\begin{abstract}
In this paper, we define the set of singular grid diagrams $\SG$
which provides a unified description for singular links, singular Legendrian links, singular transverse links, and singular braids.
We also classify the complete set of all equivalence relations on $\SG$ which induce the bijection onto each singular object. 
This is an extension of the known result of Ng-Thurston \cite{NT} for non-singular links and braids.
\end{abstract}
\maketitle
\tableofcontents

\section{Introduction}
\subsection{Grid diagrams and a unified description}
A {\em grid diagram} of size $n$ is an oriented link diagram which consists only of $n$ vertical and $n$ horizontal line segments in such a way that at each crossing the vertical line segment crosses over the horizontal line segment and no two line segments are colinear. 
In short, a grid diagram of size $n$ is an $n\times n$ matrix of $8$ kinds of the following symbols, called {\em grid tiles}, representing a link such that no more than two corners exist in any vertical and horizontal line.

 \begin{figure}[ht]
  \begin{center}
    \includegraphics[scale=1]{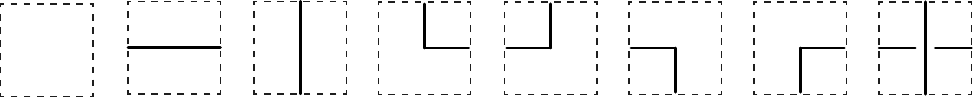}
  \end{center}
  \vspace{-3mm}
  \caption{Grid tiles of a grid diagram}\label{fig:local_grid}
 \end{figure}

Due to Cromwell~\cite{C}, a grid diagram is noted as an arc presentation of a link, which is defined as an embedding of the link in finitely many pages of the open-book decomposition so that the link meets each page in a single simple arc. In the paper, he described combinatorial transformations of grid diagrams which do not change topological knot type. These are {\em translations}, {\em commutations}, and {\em (de)stabilizations} and are called {\em elementary moves}. See Figures~\ref{fig:trans}, \ref{fig:commu} and \ref{fig:stabil}.
In~\cite{D}, Dynnikov proved that the decomposition problem of arc presentation is solvable by monotonic simplification using elementary moves. This  is a remarkable result in knot theory of which the most important problem is the classification of knots and links.
Grid diagrams became more popular in recent years due to a connection with Legendrian links~\cite{NT, OST} and giving a combinatorial description of knot Floer homology that is a knot invariant defined in terms of Heegaard Floer homology~\cite{MOS,MOST}.

\begin{figure}[ht]
\[
\xymatrix{
\vcenter{\hbox{\includegraphics{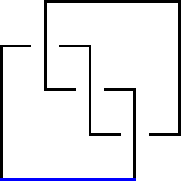}}} \ar@{<->}@[blue][r]^-{(a)}\quad&\quad
\vcenter{\hbox{\includegraphics{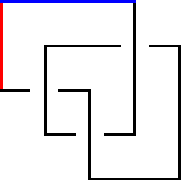}}} \ar@{<->}@[red][r]^-{(b)}\quad&\quad
\vcenter{\hbox{\includegraphics{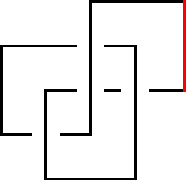}}}
}
\]
\caption{Translations mean cyclic permutations of $(a)$ horizontal or $(b)$ vertical edges, respectively}\label{fig:trans}
\end{figure}

\begin{figure}[ht]
\[
\xymatrix{
\vcenter{\hbox{\includegraphics{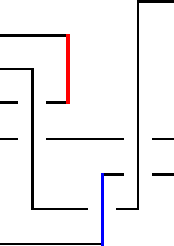}}} \ar@{<->}[r]\quad&\quad
\vcenter{\hbox{\includegraphics{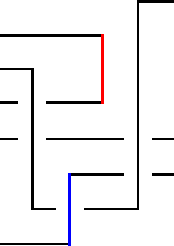}}} &
\vcenter{\hbox{\includegraphics{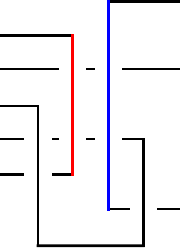}}} \ar@{<->}[r]\quad&\quad
\vcenter{\hbox{\includegraphics{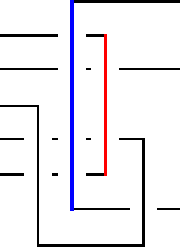}}}
}
\]
\caption{Commutation is interchanging adjacent two non-interleaved vertical edges or horizontal edges, respectively.}\label{fig:commu}
\end{figure}

\begin{figure}[ht]
\begin{center}
\[
\xymatrix@C=5pc@R=.5pc{
\vcenter{\hbox{\includegraphics{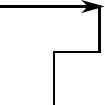}}} \quad
\ar@<-.5ex>@{<-}[rd]_*[@]!/^3pt/{\labelstyle(NW)}
\ar@<.5ex>[rd]^*[@]!/_7pt/{\labelstyle(NW)^{-1}}
& & \quad\vcenter{\hbox{\includegraphics{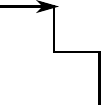}}}\\
 & \quad\vcenter{\hbox{\includegraphics{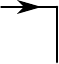}}}\quad
\ar@<.5ex>[ru]^*[@]!/_3pt/{\labelstyle(NE)}
\ar@<-.5ex>@{<-}[ru]_*[@]!/^7pt/{\labelstyle(NE)^{-1}}
\ar@<-.5ex>[rd]_*[@]!/^3pt/{\labelstyle(SE)}
\ar@<.5ex>@{<-}[rd]^*[@]!/_7pt/{\labelstyle(SE)^{-1}}
 & \\
\vcenter{\hbox{\includegraphics{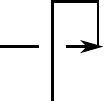}}} \quad
\ar@<.5ex>@{<-}[ru]^*[@]!/_3pt/{\labelstyle(SW)}
\ar@<-.5ex>[ru]_*[@]!/^7pt/{\labelstyle(SW)^{-1}}
& & \quad \vcenter{\hbox{\includegraphics{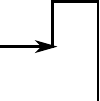}}}
}
\]
\end{center}
\caption{(De)stabilization is a local move which changes the grid size by one. }\label{fig:stabil}
\end{figure}

\medskip

We denote $\G, \B, \L, \T$, and $\K$ as follows:

\begin{itemize}
\item[-] $\G$ : the set of all grid diagrams.
\item[-] $\B$ : the set of all equivalent classes of braids on $D^2$ modulo conjugation and exchange move.
\item[-] $\L$ : the set of all equivalent classes of Legendrian links in $(\R^3, \xi_0)$, where $\xi_0=\ker(dz-ydx)$ is the {\em standard contact structure}.
\item[-] $\T$ : the set of all equivalent classes of {\em positively oriented} transverse links in $(\R^3, \xi_0)$.
\item[-] $\K$ : the set of all equivalent classes of smooth links in $\R^3$.
\end{itemize}

\medskip

This work is motivated by the construction in Ozsv\'ath-Szab\'o-Thurston~\cite{OST} and Ng-Thurston~\cite{NT} of maps between $\G, \B, \L, \T$, and $\K$. 

Figure~\ref{fig:gridsquareexample} shows how a grid diagram leads to a braid, a Legendrian link and a transverse link. In the figure, the braid $(a)$ is obtained by flipping horizontal segments of the grid diagram going from right to left. The front projection of Legendrian link $(b)$ is naturally transformed from the grid diagram by rotating the diagram of 45 degrees counterclockwise and then smoothing up and down corners and turning right and left corners into cusps.  Transverse links can be obtained by positive push-off of Legendrian links using the rule of Figure~\ref{fig:pushoff}. 
Using rotationally symmetric contact structure $\ker(dz -ydx + xdy)$ \cite{Ben}, closed braids can be transformed into transverse links. Then the unit circle in the $xy$-plane is the transverse knot. 

\begin{figure}[ht]
\[
\xymatrix{
\qquad\vcenter{\hbox{\includegraphics{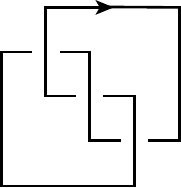}}} \ar[r]\ar[d] \qquad&\quad (b)\quad\vcenter{\hbox{\includegraphics{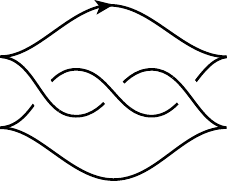}}}\quad \ar@<2.5ex>[d]\\
(a)\quad\vcenter{\hbox{\includegraphics{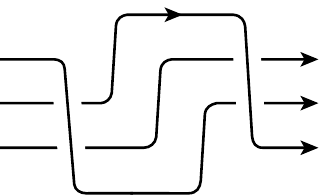}}} \ar[r] \quad&\quad (c)\quad\vcenter{\hbox{\includegraphics{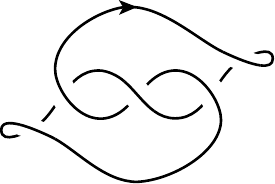}}}
}
\]
\caption{Grid diagrams are very closely related to $(a)$braids, $(b)$the front projection of Legendrian links and $(c)$transverse links}\label{fig:gridsquareexample}
\end{figure}

Khandhawit and Ng~\cite{KN} proved the commutativity of the maps as in Figure~\ref{fig:maps} and
Ozsv\'ath-Szab\'o-Thurston~\cite{OST} and Ng-Thurston~\cite{NT} showed that those maps induce bijections. See Proposition~\ref{prop:COSTNT}. It means that the maps can be understood in grid diagrams.
In the papers, they distinguished (de)stabilizations into four types, $(NW), (SW), (SE)$ and $(NE)$, which are exemplified in Figure~\ref{fig:stabil}. 

\begin{prop}\cite{C,NT,OST}\label{prop:COSTNT}
The grid diagram $\G$ gives a {\em unified description} for $\B, \L, \T$ and $\K$ in the following sense: 
There are bijections induced by the canonical maps
\begin{align*}
    \B & \longleftrightarrow \tilde{\G} / \{(NE),(SE)\}, \\
    \L & \longleftrightarrow \tilde{\G} / \{(NE),(SW)\}, \\
    \T & \longleftrightarrow \tilde{\G} / \{(NE),(SW),(SE)\}, \\
    \K & \longleftrightarrow \tilde{\G} / \{(NE),(SW),(SE),(NW)\},
\end{align*}
where $\tilde{\G}$ is the quotient of $\G$ by translations and commutations. 
\end{prop}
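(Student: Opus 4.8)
The plan is to build, for each of $\B$, $\L$, $\T$ and $\K$, the canonical map out of $\G$ given by the geometric recipes of Figure~\ref{fig:gridsquareexample} (together with the transverse push-off rule of Figure~\ref{fig:pushoff}), to check by a finite tile-by-tile computation that it is constant on the moves being quotiented out, and then to identify the induced map on the appropriate quotient of $\tilde\G$ with a known classification theorem; compatibility of these maps, which makes ``the canonical map'' unambiguous, is the commuting square of Khandhawit--Ng \cite{KN}. The case of $\K$ is the quickest: a grid diagram is precisely the data of an arc presentation of its underlying link (Cromwell \cite{C}), so the canonical map $\G\to\K$ records the link type, every link admits an arc presentation, and Cromwell's theorem --- refined to a monotonic-simplification statement by Dynnikov \cite{D} --- says that two arc presentations carry the same link if and only if one is obtained from the other by translations, commutations and (de)stabilizations of all four kinds. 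Since $\tilde\G$ has already been quotiented by translations and commutations, this is exactly $\K\leftrightarrow\tilde\G/\{(NW),(NE),(SW),(SE)\}$.

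For $\L$, rotating a grid diagram by $45^\circ$, smoothing the corners that point up or down and turning the corners that point left or right into cusps produces a Legendrian front, which defines $\G\to\L$. A local check on grid tiles shows each commutation is a composition of Legendrian planar isotopies and Legendrian Reidemeister moves, and that the small square added by an $(NE)$ or $(SW)$ stabilization sits at a corner which becomes smooth in the front --- so these two are Legendrian isotopies --- while an $(NW)$ or $(SE)$ stabilization inserts a cusp pair and hence performs a Legendrian stabilization, shifting $(tb,r)$ by $(-1,\pm1)$. Thus the map descends to $\tilde\G/\{(NE),(SW)\}$. Surjectivity uses that every Legendrian link is Legendrian isotopic to one with a ``rectangular'' front, i.e.\ in grid position, and injectivity is the theorem of Ozsv\'ath--Szab\'o--Thurston \cite{OST} and Ng--Thurston \cite{NT} that any Legendrian isotopy between two grid fronts is realizable by a finite sequence of commutations and $(NE)/(SW)$ (de)stabilizations. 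Granting this, quotienting further by the two Legendrian stabilizations $(NW)$ and $(SE)$ collapses $\L$ onto $\K$, by the Fuchs--Tabachnikov theorem that any two Legendrian representatives of a fixed topological link become Legendrian isotopic after enough stabilizations; this both re-proves the $\K$ line and pins down $(NW)$, $(SE)$ as the stabilizing pair.

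For $\T$, post-compose the front construction with the positive transverse push-off (Figure~\ref{fig:pushoff}). The positive push-off is insensitive to negative Legendrian stabilization, and transverse links are precisely Legendrian links modulo negative stabilization (Epstein--Fuchs--Gray, Fuchs--Tabachnikov), so by the $\L$-bijection the transverse map factors through $\tilde\G/\{(NE),(SW)\}$ and then through the one extra move given by the negative Legendrian stabilization; inspecting the push-off rule shows that move to be $(SE)$ rather than $(NW)$ --- as it must be, since adjoining $(NW)$ instead has to return us to $\K$ --- giving $\T\leftrightarrow\tilde\G/\{(NE),(SW),(SE)\}$. For $\B$, flipping each right-to-left horizontal edge of a grid diagram so that it runs left-to-right exhibits the diagram as the closure of a braid; surjectivity is immediate, and one checks that vertical commutations and translations realize braid relations and conjugation, horizontal ones realize isotopies of the closure, the $(NE)$ and $(SE)$ (de)stabilizations realize conjugation together with the Birman--Menasco exchange move, and the remaining two are Markov (de)stabilizations, so the braid map descends to $\tilde\G/\{(NE),(SE)\}$ and injectivity is the translation into grid language, carried out in \cite{NT}, of the Birman--Menasco exchange-move theorem. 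As a consistency check, adjoining $(SW)$, i.e.\ positive Markov stabilization, turns braids modulo conjugation and exchange into transverse links by the transverse Markov theorem of Wrinkle and of Orevkov--Shevchishin, recovering the $\T$ line, and adjoining $(NW)$ as well recovers $\K$ by Markov's theorem.

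The whole argument funnels into a single hard input, the injectivity in the Legendrian case: one must rewrite an arbitrary Legendrian isotopy of grid fronts as a product of grid commutations and $(NE)/(SW)$ (de)stabilizations. The plan for that step, following \cite{OST,NT}, is to put both fronts in grid position, to cut the isotopy into Legendrian Reidemeister moves, to realize each such move by an explicit short word in grid moves, and then to bookkeep the stabilization types that occur and verify that only $(NE)$ and $(SW)$ are needed. Everything else is either one of the finite local tile checks above or a citation of the classical arc-presentation calculus (Cromwell, Dynnikov) and braid theory (Markov, Birman--Menasco, Orevkov--Shevchishin, Wrinkle).
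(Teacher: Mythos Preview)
The paper does not supply its own proof of this proposition: it is stated as a citation of results from Cromwell~\cite{C}, Ozsv\'ath--Szab\'o--Thurston~\cite{OST}, and Ng--Thurston~\cite{NT}, and is used as the nonsingular baseline that the rest of the paper extends to the singular setting. Your proposal is a correct and well-organized outline of the arguments in those references, with the right identifications of which (de)stabilization types correspond to which geometric moves (cf.\ Proposition~\ref{prop:stabilizations} later in the paper), so there is nothing to compare against here beyond noting that your sketch faithfully reconstructs the cited proofs.
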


\subsection{Singular links and extended grid diagrams}
We extend scope of study of the maps in Figure~\ref{fig:maps} in terms of singular links as follows.
Let $\SK$ be the set of all equivalent classes of singular links in $S^3$.  We use naturally the following notations:
\begin{itemize}
\item[-] $\SB$ : the set of all equivalent classes of singular braids on $D^2$ modulo conjugation and exchange move.
\item[-] $\SL$ : the set of all equivalent classes of Legendrian singular links in $(S^3, \xi_{std})$.
\item[-] $\ST$ : the set of all equivalent classes of {\em positively oriented} transverse singular links in $(S^3, \xi_{std})$.
\end{itemize}
Then there are maps between $\SB, \SL, \ST$ and $\SK$, which correspond to solid lines in Figure~\ref{fig:e_maps} and extend maps between non-singular sets $\B, \L, \T$ and $\K$.
\begin{figure}[ht]
\subfigure[\label{fig:maps}]{\makebox[0.4\textwidth]{$
\xymatrix{
\G \ar[r]\ar[d]\ar[rd]\ar@/^4pc/[ddrr] & \L \ar[d]\ar[ddr]\\
\B\ar[r]\ar[rrd] & \T\ar[dr]\\
& & \K
}
$}}
\subfigure[\label{fig:e_maps}]{\makebox[0.4\textwidth]{$
\xymatrix{
\fbox{$\SG$} \ar@{-->}[r]\ar@{-->}[d]\ar@{-->}@/^4pc/[ddrr] \ar@{-->}[rd]
& \SL \ar[d]\ar[ddr]\\
\SB\ar[r]\ar[rrd] & \ST\ar[dr]\\
& & \SK
}
$}}
\caption{Commutative diagram (a) of $\G, \B, \L, \T$, and $\K$ and its extension (b) of $\SB, \SL, \ST$ and $\SK$}
\end{figure}

One of the important properties of the above maps  between singular sets $\SB$, $\SL$, $\ST$ and $\SK$ is that they commute with {\em resolutions}, which are the ways to resolve singular points.

In this paper, we want to extend grid diagrams $\G$ to {\em extended} grid diagrams $\SG$, which give us a {\em unified description} for $\SB$, $\SL$, $\ST$ and $\SK$ in the sense that not only the whole diagram in Figure~\ref{fig:e_maps} is commutative but also all maps in the figure commute with resolutions. 

Grid diagrams for singular links $\SK$ were already defined independentely by Audoux~\cite{Au}, Welji~\cite{Ws} and Harvey-O'Donnel~\cite{HO} in order to generalize link Floer homology to singular links or graphs. 
Their definitions are based on the {\em `$OX$' system} used in many literatures including \cite{MOS, MOST} and \cite{NT} to give a combinatorial description of link Floer homology.
Roughly speaking, the `$OX$' system on a rectangular board is a way indicating {\em corners} of the given link, whose orientation comes from the following rules: $O\to X$ in each row, and $X\to O$ in each column.

Audoux allows that a row or column may have two $O$'s and two $X$'s whose configuration is the same as $O\cdots O\cdots X\cdots X$, up to cyclic permutations, and interprets this configuration as two transversely intersecting arcs.
It is remarkable that a grid diagram of Audoux is a rectangle but not necessarily a square.

On the other hand, Welji considered singular points as special corners denoted by ${}^{X}{\!}_X$, which represents a singular point having two incoming horizontal arcs and two outgoing vertical arcs.
Harvey and O'Donnol use the {\em dual} convention, which is essentially the same as but opposite to Welji's. In other words, they introduced a special corner denoted by $O^*$ which represents a singular point with incoming vertical arcs and outgoing horizontal arcs. However, they generalized even more so that $O^*$ may have $k$ incoming vertical arcs and $\ell$ outgoing horizontal edges for any $k$ and $\ell\ge1$, and so $O^*$ becomes a vertex of spatial graph.

See Figure~\ref{fig:known models} for the comparison of the above models. One can see the similarity among these models, and regard Audoux's model as the one {\em between} other two.
Even though these models have their own meaning and are useful especially for the computation of the new invariant, there are still no canonical recipes for other variants of singular links, such as, singular Legendrian and transverse links and singular braids that we concern.

\begin{figure}[ht]
\subfigure[A pinched $3_1$]{\makebox[0.17\textwidth]{\includegraphics{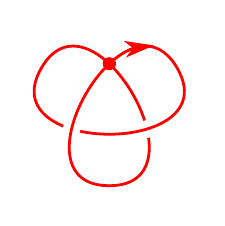}}}
\subfigure[Welji's model]{\makebox[0.24\textwidth]{\includegraphics{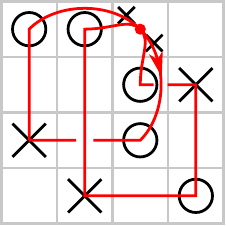}}}
\subfigure[Audoux's model]{\makebox[0.27\textwidth]{\includegraphics{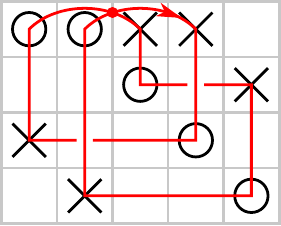}}}
\subfigure[Harvey-O'Donnol's model]{\makebox[0.26\textwidth]{\includegraphics{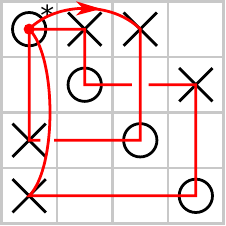}}}
\caption{Known models of grid diagrams for singular links in $\SK$}
\label{fig:known models}
\end{figure}

\subsection{Results}
We define grid diagrams for singular links using different shapes of tiles rather than symbols like $O$ or $X$. In addition to the tiles in Figure~\ref{fig:local_grid}, we need appropriate tiles representing singular points. Depending on the projection of a singular point we can naturally consider the following two tiles $t_\times$ and $t_\bullet$ with transverse intersection and non-transverse intersection near the singular point, respectively:

\[
t_\times=\vcenter{\hbox{\includegraphics[scale=0.7]{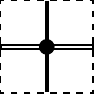}}}\quad\text{ and }\quad
t_\bullet=\vcenter{\hbox{\includegraphics[scale=0.7]{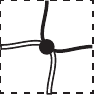}}}.
\]

Then we prove the following theorems:

\begin{thm}\label{thm:unified}
Let $\SG_\bullet$ be the set of grid diagrams of singular links extended by $t_\bullet$. Then $\SG_\bullet$ gives a unified description for $\SB,\SL,\ST$ and $\SK$.
\end{thm}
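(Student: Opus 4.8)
The plan is to imitate the proof of Proposition~\ref{prop:COSTNT} due to Cromwell, Ng-Thurston and Ozsv\'ath-Szab\'o-Thurston, isolating all of the new behaviour in a neighbourhood of a single $t_\bullet$-tile. First I would write down the four canonical maps out of $\SG_\bullet$: the map to $\SB$ flips the horizontal segments of the diagram from right to left, as in Figure~\ref{fig:gridsquareexample}(a), and sends each $t_\bullet$-tile to a singular crossing of the braid; the map to $\SL$ rotates the diagram by $45^\circ$, smooths the $NW$ and $SE$ corners, turns the $NE$ and $SW$ corners into cusps, and sends $t_\bullet$ to the standard Legendrian singular point; the map to $\ST$ is the positive push-off of the Legendrian picture (the rule of Figure~\ref{fig:pushoff}); and the map to $\SK$ is the evident forgetful map. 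Since these recipes agree with the non-singular ones of Figure~\ref{fig:gridsquareexample} away from the $t_\bullet$-tiles, commutativity of the whole diagram in Figure~\ref{fig:e_maps} --- both the maps out of $\SG_\bullet$ and the maps among $\SB,\SL,\ST,\SK$ --- reduces to commutativity of Figure~\ref{fig:maps} together with finitely many local checks at the singular tiles.

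Next I would fix the elementary moves on $\SG_\bullet$. Translations and commutations are as in the non-singular case, now also permitted to move past a $t_\bullet$-tile; I would enumerate the finitely many new instances of commutation involving $t_\bullet$ and check pictorially that each descends to an isotopy (respectively a Legendrian isotopy, a transverse isotopy, a braid conjugation or exchange move) of the corresponding singular object, so that the induced maps factor through $\tilde{\SG_\bullet}$, the quotient of $\SG_\bullet$ by translations and commutations. Similarly I would list the (de)stabilizations, split as before into the four types $(NW),(SW),(SE),(NE)$, verify that a stabilization on an edge adjacent to a $t_\bullet$-tile still descends correctly, and record which combinations of the four types are needed for each target; the plan is to show they are exactly the combinations of Proposition~\ref{prop:COSTNT}, the singular tile introducing no move not already accounted for on the two edges meeting it.

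For surjectivity of each induced map, I would start from a singular braid, Legendrian link, transverse link or smooth link, put its singular points into disjoint standard local models each realized by a $t_\bullet$-tile, and apply the non-singular unified description of Proposition~\ref{prop:COSTNT} to the complementary collection of arcs; splicing the pieces together produces a diagram in $\SG_\bullet$ mapping to the given object. For injectivity I would invoke a Reidemeister/Markov-type theorem in each of the four categories --- two singular objects are equivalent if and only if they differ by finitely many moves, each supported either away from the singular points (handled by the non-singular theory) or inside a small ball around a single singular point (a short explicit list of singular Reidemeister-type moves) --- and then lift every such move to a finite sequence of translations, commutations and the allowed stabilizations on $\SG_\bullet$. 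Finally, compatibility with resolutions is immediate from the construction: resolving a singular point amounts to replacing a $t_\bullet$-tile by one of the tiles of Figure~\ref{fig:local_grid}, and this local substitution visibly commutes with each of the four canonical maps.

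The step I expect to be the main obstacle is injectivity, specifically the \emph{completeness} of the chosen move set: one must be certain that the listed commutations and stabilizations interacting with $t_\bullet$ suffice to realize \emph{every} move of the ambient singular category, including those that slide a strand across a singular point or rearrange the strands in its immediate vicinity. This is the singular counterpart of the Cromwell-Dynnikov monotonic-simplification analysis, and the delicate point is to normalize such an ambient move so that it uses only the prescribed local moves, with no stabilization type outside the prescribed list ever being forced. Once completeness is established in one category --- say $\SK$ --- the remaining three should follow by the same bookkeeping of stabilization types used in the non-singular setting.
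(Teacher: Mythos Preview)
You have conflated Theorem~\ref{thm:unified} with the Main Theorem~\ref{thm:maintheorem}. In this paper ``unified description'' is a technical notion fixed in Definition~\ref{defn:unified}: it asks only that (1) the singular tile have images as a singular braid, Legendrian tangle and transverse tangle, (2) these induce maps $\|\cdot\|_{\SC}:\SG_\bullet\to\SC$ extending the nonsingular maps $\|\cdot\|_{\mathcal{C}}$, (3) resolutions commute with each map, and (4) the diagram in Figure~\ref{fig:e_maps} commutes. No bijections are required. Your second, third and fourth paragraphs --- moves on $\SG_\bullet$, surjectivity, injectivity, completeness of the move set --- all address Theorem~\ref{thm:maintheorem}, and the paper defers that material entirely to Section~\ref{sec:proof_mainthm}.

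What the theorem does require is closer to your first paragraph, but your description of $\|\cdot\|_{\SB}$ hides the main difficulty. One cannot simply ``flip the horizontal segments and send each $t_\bullet$-tile to a singular crossing'': a reversed horizontal segment may now \emph{end} at a singular tile rather than at a corner, so the flip must be extended to the new segment types (the paper's types~II--IV), and the four oriented tiles $t_\bullet^N,t_\bullet^E,t_\bullet^W,t_\bullet^S$ then land on four \emph{different} local braid pictures (Figure~\ref{fig:flipsingulartile}). Likewise the resolutions $\mathcal{R}_\eta(t_\bullet^*)$ are $2\times 2$ pseudo grid diagrams (Figure~\ref{fig:singularresolutions}), not single tiles from Figure~\ref{fig:local_grid}, and the check that $\|\cdot\|_{\SB}$ commutes with them (Lemma~\ref{lem:map_sb}) is a genuine case analysis. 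With those definitions in place, the only substantive step in the commutativity of the diagram is the identity $\hat{(\|G\|_{\SB})}_{\T}=(\|G\|_{\SL})^+$ of Lemma~\ref{lem:map_st}, established by a tile-by-tile comparison of the two routes $\SG_\bullet\to\ST$ at each oriented corner and each oriented singular tile.
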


\begin{thm}\label{thm:notunified}
Let $\SG_\times$ be the set of grid diagrams of singular links extended by $t_\times$. 
Then $\SG_\times$ does not give a unified description whatever the maps onto $\SB$,$\SL$,$\ST$ and $\SK$ are defined.
\end{thm}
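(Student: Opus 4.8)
The strategy is a proof by contradiction that uses only the defining properties of a unified description, so that it does not depend on the particular maps chosen. Suppose $\SG_\times$ gave a unified description for $\SB,\SL,\ST,\SK$. Then in particular there is a map $\Phi_\SL\colon\SG_\times\to\SL$ which (i) extends the non-singular map $\G\to\L$ of Proposition~\ref{prop:COSTNT}, (ii) commutes with resolutions, and (iii) descends to a bijection from $\widetilde{\SG_\times}$ modulo an appropriate set of stabilizations. The point is that (i) and (ii) already pin $\Phi_\SL$ down on singular diagrams: if $D\in\SG_\times$ carries a single $t_\times$ tile, then $\Phi_\SL(D)$ must be a singular Legendrian link whose two resolutions are the Legendrian links obtained by applying $\G\to\L$ to the two resolutions $D^{+},D^{-}\in\G$ of $D$. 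Hence it is enough to produce one $D$ for which the pair $(D^{+},D^{-})$ cannot arise as the resolution pair of any singular Legendrian link.

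First I would analyse what ``the two resolutions of $t_\times$'' means inside the grid calculus. In contrast with $t_\bullet$, whose two branches are tangent, $t_\times$ is a genuine transverse double point and so carries no preferred over-strand; resolving it means turning it into an honest crossing. But a grid diagram permits only vertical-over-horizontal crossings, so the two resolutions of a $t_\times$ tile are \emph{not} symmetric: one of them, $D^{+}$, is again a grid diagram, whereas the other, $D^{-}$, is the forbidden crossing and can be represented in $\G$ only after a local ``crossing-turning'' detour --- a fixed finite word in commutations, translations and (de)stabilizations. I would make this detour completely explicit and, crucially, record which of the four stabilization types $(NW),(SW),(SE),(NE)$ of Figure~\ref{fig:stabil} it must use, and how it changes the grid size and the writhe.

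The contradiction then comes from comparing this with the local model of a resolution of a singular \emph{Legendrian} point. There the two branches are tangent in the front, so resolving inserts a positive, respectively negative, crossing with \emph{no change of cusps}: the two resolutions have equal rotation number and Thurston--Bennequin invariants differing by exactly $2$. Computing $tb$ and $r$ of $\G\to\L$ applied to $D^{+}$ and $D^{-}$ from the standard grid formulas --- using the explicit detour of the previous step for $D^{-}$ --- I expect the difference $tb(D^{+})-tb(D^{-})$ (or the pair of rotation numbers) to come out \emph{wrong}: the crossing-turning detour is forced to use a stabilization of $(NW)$ or $(SE)$ type, which shifts $tb$ or $r$ by an amount that cannot be cancelled, so the two resolutions of $\Phi_\SL(D)$ fail the relation that the two resolutions of any singular Legendrian link must satisfy. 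Equivalently, and without invariant bookkeeping, $D^{+}$ and $D^{-}$ differ by a grid diagram of the wrong size parity to be a Legendrian singular resolution; and the same computation run against $\ST$, or against the commutativity of the whole diagram of Figure~\ref{fig:e_maps}, yields the same obstruction.

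The main obstacle is the middle step: pinning down the crossing-turning detour precisely enough to see which stabilization types it is \emph{forced} to use, and checking that its effect on $(tb,r)$ --- equivalently, on the placement of $D^{\pm}$ in the four stabilization quotients --- genuinely cannot be absorbed by translations, commutations, or the stabilizations that are allowed for $\SL$ (respectively $\ST,\SB,\SK$). One has to rule this out for all four targets simultaneously, since a unified description requires a single $\widetilde{\SG_\times}$ supporting all four quotients compatibly; the cleanest way is probably to show that the detour forces \emph{different} stabilization types depending on which resolution one takes, so that no single choice of the allowed set can make $\Phi_\SL$ well defined and resolution-compatible at the same time.
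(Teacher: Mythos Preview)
Your proposal has a genuine gap. You argue as if the grid resolutions $\mathcal{R}_\eta(t_\times^*)$ were forced to be the naive ``flip the crossing'' pictures, and then track which (de)stabilizations the bad one needs. But in Definition~\ref{defn:unified} the pseudo grid diagrams $\mathcal{R}_\eta(t_\times^*)$ are \emph{part of the data} of a unified description: they may be any pseudo grid diagrams whatsoever, of any size, subject only to the constraint that their images in $\mathcal{C}$ agree with the resolutions of $\|t_\times^*\|_{\mathcal{SC}}$. So there is no ``forced detour'' to analyse. In particular, your local $tb$/rotation bookkeeping cannot give an obstruction: for any prescribed pair of Legendrian tangles (say with $tb$-contribution differing by $2$) one can cook up pseudo grid diagrams realising them, exactly as is done for $t_\bullet$ in Figure~\ref{fig:singularresolutions}. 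The sentence ``(i) and (ii) already pin $\Phi_\SL$ down on singular diagrams'' is therefore not correct as stated: condition~(ii) pins $\Phi_\SL(D)$ down only \emph{modulo} the free choice of $\mathcal{R}_\eta(t_\times^*)$.

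The paper's proof avoids this issue entirely by never touching grid-level resolutions. It works only with the hypothetical Legendrian tangle $L_\times:=\|t_\times^E\|_{\SL}$ and uses two global ingredients you do not invoke: (a) the maximal Thurston--Bennequin bound for the left-handed trefoil, which via Lemma~\ref{lem:resolutionandtb} forces $tb(\hat{L_\times})\le -4$; and (b) the structural result Theorem~\ref{thm:deg2unit} from \cite{ABK}, which says any one-singular-point tangle summand of $L_{\bigcirc\!\!\!\bigcirc}$ has closure equal to $L_{\bigcirc\!\!\!\bigcirc}$ itself. Since $L_\times$ must sit inside any grid presentation of $L_{\bigcirc\!\!\!\bigcirc}$, these combine to give $-2=tb(L_{\bigcirc\!\!\!\bigcirc})=tb(\hat{L_\times})\le -4$. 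This argument is independent of how $\mathcal{R}_\eta(t_\times^*)$ might have been chosen, which is exactly what ``whatever the maps are defined'' demands.
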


Therefore we may regard $\SG_\bullet$ as $\SG$ equipped with a unified description.
In $\SG$, we consider generalized elementary moves, $(\pm)$-rotations $(Rot_\pm)$, swirl $(Swirl)$ and flype $(Flype)$ which are described in Section~\ref{sec:proof_mainthm}. Then we prove that the diagram of Figure~\ref{fig:e_maps} induces the following bijections.

\begin{thm}[Main Theorem]\label{thm:maintheorem}
Let \,$\widetilde{\SG}$ denote the quotient set of $\SG$ by translations and commutations. The maps $\SG\to\SK$,\,$\SG \rightarrow \SB, \,\SG \rightarrow \SL$ and \,$\SG \rightarrow \ST$ induce bijections as follows:
 \begin{align*}
    \SB & \longleftrightarrow \tilde{\SG} /  \{(NE), (SE), (Flype), (Swirl), (Rot_\pm^*)\}, \\
    \SL & \longleftrightarrow \tilde{\SG} /  \{(NE), (SW), (Rot_\pm)\}, \\
    \ST & \longleftrightarrow \tilde{\SG} /  \{(NE), (SW), (SE), (Flype), (Rot_\pm)\}, \\
    \SK & \longleftrightarrow \tilde{\SG} /  \{(NE), (SW), (SE), (NW), (Flype), (Rot_\pm)\}. 
 \end{align*}
\end{thm}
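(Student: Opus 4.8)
The plan is to mimic the strategy of Ng--Thurston~\cite{NT} and Ozsv\'ath--Szab\'o--Thurston~\cite{OST} as encoded in Proposition~\ref{prop:COSTNT}, treating each of the four target sets separately but building the singular cases on the non-singular skeleton. For a fixed target, say $\SL$, the argument splits into the usual two halves: \emph{surjectivity} of the canonical map $\widetilde{\SG}\to\SL$ (every singular Legendrian link comes from a grid diagram), and the statement that two grid diagrams map to the same element of $\SL$ \emph{if and only if} they are connected by the listed moves. Surjectivity should be essentially immediate from the explicit geometric dictionary sketched in Figures~\ref{fig:gridsquareexample} and \ref{fig:pushoff} together with the $t_\bullet$ tile: a singular front (resp.\ singular braid, transverse link, smooth link) can be isotoped into ``grid position,'' with each singular point pushed into a $t_\bullet$ tile, exactly as in the non-singular theory. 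The substance is the ``only if'' direction, i.e.\ showing that the listed moves generate the full equivalence.

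First I would establish the non-singular backbone. Restricting to grid diagrams with no $t_\bullet$ tiles, Proposition~\ref{prop:COSTNT} already gives that $(NE),(SW)$ (plus translations/commutations) generate Legendrian equivalence, and similarly for the other three rows; so the new content is entirely \emph{local around the singular tiles}. The key reduction is a ``normal form near a singularity'' lemma: up to translations, commutations, and the already-available (de)stabilizations of the relevant types, any grid diagram can be brought to one in which each $t_\bullet$ tile sits in a standardized neighborhood (a small sub-grid containing only that tile and the four turning tiles feeding it), and in which the rest of the diagram is a non-singular grid for the ``rest'' of the link. Then two grids for the same singular object differ by (i) a non-singular equivalence away from the singular tiles, handled by Proposition~\ref{prop:COSTNT}, and (ii) a sequence of local modifications of the standardized singular neighborhoods. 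Step (ii) is where $(Rot_\pm)$, $(Swirl)$, $(Flype)$ enter: each must be shown to be a consequence of the geometric move it models (rotating the two strands through a singular point; the transverse/braid-specific rearrangements), and conversely any geometric isotopy or Reidemeister-type move involving a singular vertex must be decomposed into a finite sequence of these local moves together with non-singular ones. For $\SB$ the extra $(Rot_\pm^*)$, $(Swirl)$, $(Flype)$ correspond to the braid-closure ambiguities (conjugation, exchange move, and reorganizing a singular crossing within the braid), and one checks the braid picture of Figure~\ref{fig:gridsquareexample}(a) respects exactly these; for $\ST$ the presence of $(SE)$ and $(Flype)$ but not the starred rotation matches the transverse push-off rule.

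The technical heart — and the step I expect to be the main obstacle — is verifying the \emph{completeness} of the move list: that no further relation is needed. Concretely, one must analyze how a generic isotopy of a singular link (or Legendrian front, etc.) interacts with grid position. In the non-singular case Ng--Thurston do this by a careful ``ladder'' argument tracking how a grid diagram evolves under an ambient isotopy; here the same machinery must be run with a marked singular vertex present, checking that every elementary event — a strand passing the vertex, the two vertex-strands swapping roles, a cusp or Reidemeister move colliding with the vertex — is realized by $(Rot_\pm)$, $(Swirl)$, or $(Flype)$ composed with non-singular moves. This is a finite but delicate case analysis, and it is also where Theorem~\ref{thm:notunified} is implicitly used as a sanity check: it explains \emph{why} $t_\bullet$ rather than $t_\times$ is the right tile, since with $t_\times$ some such elementary event would fail to be expressible, breaking commutativity with resolutions. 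A secondary obstacle is bookkeeping the four cases uniformly: I would prove a single ``relative'' statement — grids agreeing outside a fixed disk containing all singular tiles are related by local moves iff the enclosed tangles are equivalent in the appropriate category — and then read off the four bijections by plugging in the four non-singular results of Proposition~\ref{prop:COSTNT} for the complement, noting which destabilization types survive (all four for $\SK$, $(NE),(SW)$ for $\SL$, etc.) and which local singular moves survive ($(Rot_\pm)$ always, $(Rot_\pm^*)/(Swirl)$ only in the braid setting, $(Flype)$ in all but the Legendrian setting).
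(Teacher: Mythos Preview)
Your proposal is coherent in outline but takes a substantially different route from the paper, and in doing so misses the simplifications that make the paper's proof short.

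The paper does \emph{not} reduce to the non-singular result of Proposition~\ref{prop:COSTNT} plus a local analysis near singular tiles, nor does it analyze generic isotopies interacting with grid position. Instead, for $\SL$ it uses the complete list of singular Legendrian Reidemeister moves $(LRM1)$--$(LRM6)$ already established in Proposition~\ref{prop:frontmove}, and simply checks that each one is realized by a specific grid move: $(LRM1)$ by $(NE)$ or $(SW)$, $(LRM2),(LRM3),(LRM5)$ by commutations, and $(LRM4),(LRM6)$ by $(Rot_\pm)$. That is the entire injectivity argument for the $\SL$ row (Proposition~\ref{prop:SGSL}). For $\SB$ the argument (Proposition~\ref{prop:SGSB}) is likewise direct: first retract $\SG$ onto the subset $\SG^E$ containing only the tile $t_\bullet^E$ using $(Swirl)$ and $(Rot_\pm^*)$, then build an explicit inverse $\beta\mapsto G(\beta)$ via rectilinear closure and check that the defining relations $(Br0)$--$(BrF)$, $(BrC)$, $(BrE)$ of $\SB$ correspond to translations, commutations, and $(Flype)$.

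The most important divergence concerns $\ST$ and $\SK$: the paper never analyzes transverse or smooth isotopies against grids at all. Having proved the $\SL$ bijection, it simply invokes Theorem~\ref{thm:SLST} and Proposition~\ref{prop:SLSK} to write $\ST=\SL/\{(LS_-),(LF)\}$ and $\SK=\SL/\{(LS_\pm),(LF)\}$, and translates $(LS_\pm)$ and $(LF)$ to $(NW),(SE)$ and $(Flype)$ via Proposition~\ref{prop:stabilizations}. The $\ST$ and $\SK$ rows then fall out in two lines each. Your ``normal form near a singularity'' lemma, the ``relative'' tangle statement, and the case analysis of elementary isotopy events are all unnecessary once the Reidemeister classifications of Section~\ref{sec:singular links} are in hand; conversely, your approach would require making those classifications from scratch inside the isotopy analysis, which is where the vagueness in your completeness step would bite. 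Theorem~\ref{thm:notunified} plays no role in the proof.
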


The rest of the paper consists as follows. In Section~\ref{sec:singular links}, we briefly review the definition and equivalence relations on each singular variant, namely, $\SK, \SB, \SL$ and $\ST$. In Section~\ref{sec:extended_grid}, we define extended grid diagrams and prove Theorem~\ref{thm:unified} and Theorem~\ref{thm:notunified}. Finally, in Section~\ref{sec:proof_mainthm}, we give a proof of Theorem~\ref{thm:maintheorem}.

\section{Singular links and relatives}
\label{sec:singular links}
\subsection{Singular links}
A {\em singular link} $K$ of $n$ components is an immersion of a disjoint union $n S^1$ of $n$ oriented circles into $\R^3$ having only transverse double point singularities, called {\em singular points}. Then two singular links $K_1$ and $K_2$ are {\em equivalent} if and only if they are homotopic and the inverse images of double points vary continuously during the homotopy.
We denote the sets of all equivalent classes of nonsingular and singular links by $\K$ and $\SK$, respectively.

Let $\pi:\R^3\to\R^2_{xz}$ be a projection onto $xz$-plane. For a singular link $K\in\SK$, the projection $\pi(K)$ is {\em regular} if 
the composition $\pi\circ K$ is a smooth immersion in $\R^2$ without triple points.
Then it is easy to see that for any singular link $K\in\SK$, there exists $K'$ equivalent to $K$ such that $\pi(K')$ is regular.

A {\em regular diagram $D(K)$} is a regular projection $\pi(K)$ equipped with a crossing information at each double point. In other words, at each double point $c\in\R^2_{xz}$, one can determine which arc is lying over or two arcs make a singular point according to the $y$-coordinate.

Two singular links $K_1$ and $K_2$ having regular diagrams $D(K_1)$ and $D(K_2)$ are equivalent if and only if their diagrams differ by a finite sequence of the planar isotopy, and the classical Reidemeister moves $(RM1)\sim(RM3)$ \cite{R} and the singular Reidemeister moves $(RM4)$ and $(RM5)$ \cite{K} depicted in the Figure~\ref{fig:S_Rmoves} including their reflections about $x$ and $z$-axes.

\begin{figure}[ht]
\begin{align*}
\vcenter{\hbox{\includegraphics[scale=1]{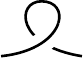}}}\quad\xleftrightarrow{(RM1)}\quad
\vcenter{\hbox{\includegraphics[scale=1]{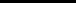}}}\quad\xleftrightarrow{(RM1)}\quad
\vcenter{\hbox{\includegraphics[scale=1]{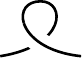}}}
\end{align*}
\begin{align*}
\vcenter{\hbox{\includegraphics[scale=1]{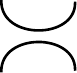}}}\quad&\xleftrightarrow{(RM2)}\quad
\vcenter{\hbox{\includegraphics[scale=1]{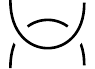}}}& 
\vcenter{\hbox{\includegraphics[scale=1]{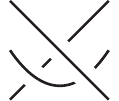}}}\quad&\xleftrightarrow{(RM3)}\quad
\vcenter{\hbox{\includegraphics[scale=1]{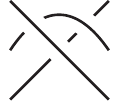}}}\\
\vcenter{\hbox{\includegraphics[scale=1]{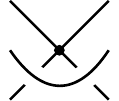}}}\quad&\xleftrightarrow{(RM4)}\quad
\vcenter{\hbox{\includegraphics[scale=1]{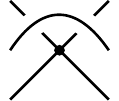}}}& 
\vcenter{\hbox{\includegraphics[scale=1]{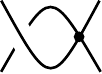}}}\quad&\xleftrightarrow{(RM5)}\quad
\vcenter{\hbox{\includegraphics[scale=1]{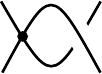}}}
\end{align*}
\caption{Classical and singular Reidemeister moves}\label{fig:S_Rmoves}
\end{figure}

\begin{defn}[Resolutions on $\SK$]
Let $K\in\SK$ and $p$ be a singular point of $K$. Then for each $\eta\in\{+,-,0\}$, we define the {\em $\eta$-resolution} $\mathcal{R}_\eta(K,p)$ at $p$ by the diagram replacement as follows.
\begin{align*}
\mathcal{R}_+\left(\vcenter{\hbox{\includegraphics[scale=0.7]{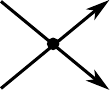}}},p\right)&:=
\vcenter{\hbox{\includegraphics[scale=0.7]{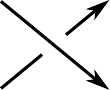}}}&
\mathcal{R}_-\left(\vcenter{\hbox{\includegraphics[scale=0.7]{K_singularcrossing_RR.pdf}}},p\right)&:=
\vcenter{\hbox{\includegraphics[scale=0.7]{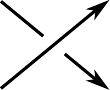}}}&
\mathcal{R}_0\left(\vcenter{\hbox{\includegraphics[scale=0.7]{K_singularcrossing_RR.pdf}}},p\right)&:=
\vcenter{\hbox{\includegraphics[scale=0.7]{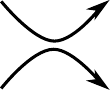}}}
\end{align*}
\end{defn}
We will omit the singular point $p$ when it is obvious. 

\subsection{Singular braids}\label{sec:braid}
For $n\ge 1$, we define $\mathbf{SB}_n$ by a monoid generated by $\sigma_1^{\pm1},\dots,\sigma_{n-1}^{\pm1}$, $\tau_1,\dots,\tau_{n-1}$ which satisfy the following relations.
\begin{align*}
\tag{$Br0$}\sigma_i\sigma_i^{-1}=\sigma_i^{-1}\sigma_i=e &\qquad i=1,\dots, n-1\\
\tag{$Br1$}\sigma_i\sigma_j=\sigma_j\sigma_i,\quad \sigma_i\tau_j=\tau_j\sigma_i,\quad \tau_i\tau_j=\tau_j\tau_i&\qquad|i-j|>1\\
\tag{$Br2$}\sigma_i\sigma_j\sigma_i=\sigma_j\sigma_i\sigma_j,\quad \sigma_i\sigma_j\tau_i=\tau_j\sigma_i\sigma_j&\qquad |i-j|=1\\
\tag{$BrF$}\sigma_i\tau_i = \tau_i\sigma_i&\qquad i=1,\dots,n-1
\end{align*}

Let $\xi_i:=\sigma_i\tau_i$ for $1\le i<n$ be auxiliary generators.
We may use $\xi_i$ instead of $\tau_i$ for generators, then due to an automorphism on $\mathbf{SB}_n$ defined as 
\[
\mathbf{SB}_n\to\mathbf{SB}_n=\begin{cases}
\sigma_i\mapsto \sigma_i,\\
\tau_i\mapsto \xi_i=\sigma_i\tau_i,
\end{cases}
\]
$\mathbf{SB}_n$ admits exactly the same monoid presentation as above except for using $\xi_i$ instead of $\tau_i$.

The generators can be viewed geometrically as depicted in Figure~\ref{fig:sbn}. We use a non-transverse intersection point for $\xi_i$ to distinguish with $\tau_i$.
Therefore for any $\beta\in\mathbf{SB}_n$, we can associate a map 
\[
\tilde\beta:(\sqcup_nI, \sqcup \partial I)\to(D^2\times I, D^2\times\partial I)
\]
by concatenating the pieces drawn in the figure properly according to the word $\beta$. We call $\tilde\beta$ a {\em geometric braid}.
It is obvious that the submonoid generated by $\sigma_i^{\pm1}$'s becomes the classical braid group $\mathbf{B}_n$.
We denote the unions $\sqcup_{n=1}^{\infty} \mathbf{B}_n$ and $\sqcup_{n=1}^{\infty} \mathbf{SB}_n$ by $\mathbf{B}$ and $\mathbf{SB}$.

\begin{figure}[ht]
\begin{align*}
\tilde\sigma_i&=\vcenter{\hbox{\includegraphics{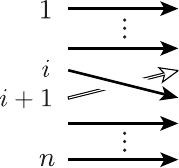}}}&
\tilde\sigma_i^{-1}&=\vcenter{\hbox{\includegraphics{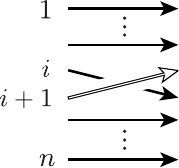}}}&
\tilde\tau_i&=\vcenter{\hbox{\includegraphics{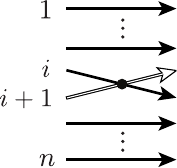}}}&
\tilde\xi_i&=\vcenter{\hbox{\includegraphics{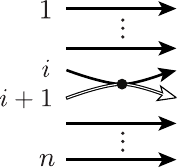}}}
\end{align*}
\caption{Geometric braids for generators of $\mathbf{SB}_n$}
\label{fig:sbn}
\end{figure}

\begin{defn}[Singular braid conjugation]Let $\beta_1, \beta_2\in\mathbf{SB}_n$. A {\em conjugation} $(BrC)$ of $\beta_1\beta_2$ by $\beta_1$ is defined by $\beta_2\beta_1$.
\[
\xymatrix@C=3pc@R=1pc{
\vcenter{\hbox{
\begingroup%
  \makeatletter%
  \providecommand\color[2][]{%
    \errmessage{(Inkscape) Color is used for the text in Inkscape, but the package 'color.sty' is not loaded}%
    \renewcommand\color[2][]{}%
  }%
  \providecommand\transparent[1]{%
    \errmessage{(Inkscape) Transparency is used (non-zero) for the text in Inkscape, but the package 'transparent.sty' is not loaded}%
    \renewcommand\transparent[1]{}%
  }%
  \providecommand\rotatebox[2]{#2}%
  \ifx\svgwidth\undefined%
    \setlength{\unitlength}{90.102bp}%
    \ifx\svgscale\undefined%
      \relax%
    \else%
      \setlength{\unitlength}{\unitlength * \real{\svgscale}}%
    \fi%
  \else%
    \setlength{\unitlength}{\svgwidth}%
  \fi%
  \global\let\svgwidth\undefined%
  \global\let\svgscale\undefined%
  \makeatother%
  \begin{picture}(1,0.35610752)%
    \put(0,0){\includegraphics[width=\unitlength,page=1]{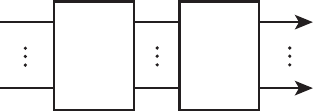}}%
    \put(0.247851,0.15761533){\color[rgb]{0,0,0}\makebox(0,0)[lb]{\smash{$\beta_1$}}}%
    \put(0.6504298,0.15761533){\color[rgb]{0,0,0}\makebox(0,0)[lb]{\smash{$\beta_2$}}}%
  \end{picture}%
\endgroup%
}}\quad
\ar@{<->}[r]^-{(BrC)}&
\quad\vcenter{\hbox{
\begingroup%
  \makeatletter%
  \providecommand\color[2][]{%
    \errmessage{(Inkscape) Color is used for the text in Inkscape, but the package 'color.sty' is not loaded}%
    \renewcommand\color[2][]{}%
  }%
  \providecommand\transparent[1]{%
    \errmessage{(Inkscape) Transparency is used (non-zero) for the text in Inkscape, but the package 'transparent.sty' is not loaded}%
    \renewcommand\transparent[1]{}%
  }%
  \providecommand\rotatebox[2]{#2}%
  \ifx\svgwidth\undefined%
    \setlength{\unitlength}{90.102bp}%
    \ifx\svgscale\undefined%
      \relax%
    \else%
      \setlength{\unitlength}{\unitlength * \real{\svgscale}}%
    \fi%
  \else%
    \setlength{\unitlength}{\svgwidth}%
  \fi%
  \global\let\svgwidth\undefined%
  \global\let\svgscale\undefined%
  \makeatother%
  \begin{picture}(1,0.35610752)%
    \put(0,0){\includegraphics[width=\unitlength,page=1]{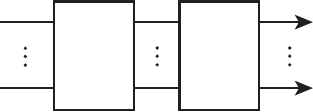}}%
    \put(0.247851,0.15761533){\color[rgb]{0,0,0}\makebox(0,0)[lb]{\smash{$\beta_2$}}}%
    \put(0.6504298,0.15761533){\color[rgb]{0,0,0}\makebox(0,0)[lb]{\smash{$\beta_1$}}}%
  \end{picture}%
\endgroup%
}}
}
\]
\end{defn}
\begin{defn}[Singular braid (de)stabilizations]Let $\beta\in\mathbf{SB}_n$. The positive and negative stabilizations $(BrS_\pm)$ of $\beta$ are defined by the $(n+1)$-braid $\beta\sigma_n^{\pm1}\in\mathbf{SB}_{n+1}$, respectively, and their inverse operations are called the {\em positive and negative destabilizations}, denoted by $(BrS_\pm)^{-1}$, respectively.
\[
\xymatrix@C=3pc@R=1pc{
\vcenter{\hbox{
\begingroup%
  \makeatletter%
  \providecommand\color[2][]{%
    \errmessage{(Inkscape) Color is used for the text in Inkscape, but the package 'color.sty' is not loaded}%
    \renewcommand\color[2][]{}%
  }%
  \providecommand\transparent[1]{%
    \errmessage{(Inkscape) Transparency is used (non-zero) for the text in Inkscape, but the package 'transparent.sty' is not loaded}%
    \renewcommand\transparent[1]{}%
  }%
  \providecommand\rotatebox[2]{#2}%
  \ifx\svgwidth\undefined%
    \setlength{\unitlength}{86.332bp}%
    \ifx\svgscale\undefined%
      \relax%
    \else%
      \setlength{\unitlength}{\unitlength * \real{\svgscale}}%
    \fi%
  \else%
    \setlength{\unitlength}{\svgwidth}%
  \fi%
  \global\let\svgwidth\undefined%
  \global\let\svgscale\undefined%
  \makeatother%
  \begin{picture}(1,0.47310125)%
    \put(0,0){\includegraphics[width=\unitlength,page=1]{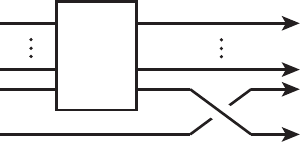}}%
    \put(0.2926837,0.23847346){\color[rgb]{0,0,0}\makebox(0,0)[lb]{\smash{$\beta$}}}%
  \end{picture}%
\endgroup%
}}\ 
\ar@<-.5ex>[r]_-{(BrS_+)^{-1}}&\ar@<-.5ex>[l]_-{(BrS_+)}
\ \vcenter{\hbox{
\begingroup%
  \makeatletter%
  \providecommand\color[2][]{%
    \errmessage{(Inkscape) Color is used for the text in Inkscape, but the package 'color.sty' is not loaded}%
    \renewcommand\color[2][]{}%
  }%
  \providecommand\transparent[1]{%
    \errmessage{(Inkscape) Transparency is used (non-zero) for the text in Inkscape, but the package 'transparent.sty' is not loaded}%
    \renewcommand\transparent[1]{}%
  }%
  \providecommand\rotatebox[2]{#2}%
  \ifx\svgwidth\undefined%
    \setlength{\unitlength}{63.31499634bp}%
    \ifx\svgscale\undefined%
      \relax%
    \else%
      \setlength{\unitlength}{\unitlength * \real{\svgscale}}%
    \fi%
  \else%
    \setlength{\unitlength}{\svgwidth}%
  \fi%
  \global\let\svgwidth\undefined%
  \global\let\svgscale\undefined%
  \makeatother%
  \begin{picture}(1,0.50673618)%
    \put(0,0){\includegraphics[width=\unitlength,page=1]{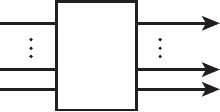}}%
    \put(0.39233038,0.17699114){\color[rgb]{0,0,0}\makebox(0,0)[lb]{\smash{$\beta$}}}%
  \end{picture}%
\endgroup%
}}\ 
\ar@<.5ex>[r]^-{(BrS_-)}&\ar@<.5ex>[l]^-{(BrS_-)^{-1}}
\ \vcenter{\hbox{
\begingroup%
  \makeatletter%
  \providecommand\color[2][]{%
    \errmessage{(Inkscape) Color is used for the text in Inkscape, but the package 'color.sty' is not loaded}%
    \renewcommand\color[2][]{}%
  }%
  \providecommand\transparent[1]{%
    \errmessage{(Inkscape) Transparency is used (non-zero) for the text in Inkscape, but the package 'transparent.sty' is not loaded}%
    \renewcommand\transparent[1]{}%
  }%
  \providecommand\rotatebox[2]{#2}%
  \ifx\svgwidth\undefined%
    \setlength{\unitlength}{86.332bp}%
    \ifx\svgscale\undefined%
      \relax%
    \else%
      \setlength{\unitlength}{\unitlength * \real{\svgscale}}%
    \fi%
  \else%
    \setlength{\unitlength}{\svgwidth}%
  \fi%
  \global\let\svgwidth\undefined%
  \global\let\svgscale\undefined%
  \makeatother%
  \begin{picture}(1,0.47310116)%
    \put(0,0){\includegraphics[width=\unitlength,page=1]{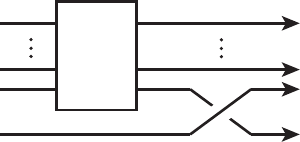}}%
    \put(0.2926837,0.23847346){\color[rgb]{0,0,0}\makebox(0,0)[lb]{\smash{$\beta$}}}%
  \end{picture}%
\endgroup%
}}
}
\]
\end{defn}
Here we identify $\mathbf{SB}_n$ with a submonoid of $\mathbf{SB}_{n+1}$ via the natural inclusion $\mathbf{SB}_n\to\mathbf{SB}_{n+1}$ sending $\sigma_i$ and $\tau_i$ to themselves, and call these moves, conjugations and (de)stabilizations, {\em Markov moves}.

We introduce one more important move as follows.
\begin{defn}[Exchange move]
Let $\beta_1,\beta_2\in\mathbf{SB}_n$. Then an {\em exchange move} $(BrE)$ changes $\beta_1\sigma_n\beta_2\sigma_n^{-1}$ into $\beta_1\sigma_n^{-1}\beta_2\sigma_n$ or {\it vice versa}.
\[
\xymatrix@C=3pc@R=1pc{
\vcenter{\hbox{
\begingroup%
  \makeatletter%
  \providecommand\color[2][]{%
    \errmessage{(Inkscape) Color is used for the text in Inkscape, but the package 'color.sty' is not loaded}%
    \renewcommand\color[2][]{}%
  }%
  \providecommand\transparent[1]{%
    \errmessage{(Inkscape) Transparency is used (non-zero) for the text in Inkscape, but the package 'transparent.sty' is not loaded}%
    \renewcommand\transparent[1]{}%
  }%
  \providecommand\rotatebox[2]{#2}%
  \ifx\svgwidth\undefined%
    \setlength{\unitlength}{140.539bp}%
    \ifx\svgscale\undefined%
      \relax%
    \else%
      \setlength{\unitlength}{\unitlength * \real{\svgscale}}%
    \fi%
  \else%
    \setlength{\unitlength}{\svgwidth}%
  \fi%
  \global\let\svgwidth\undefined%
  \global\let\svgscale\undefined%
  \makeatother%
  \begin{picture}(1,0.29062231)%
    \put(0,0){\includegraphics[width=\unitlength,page=1]{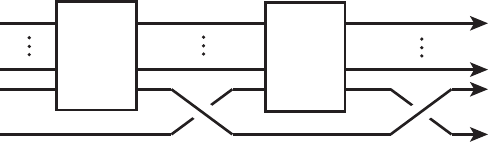}}%
    \put(0.16840855,0.14902169){\color[rgb]{0,0,0}\makebox(0,0)[lb]{\smash{$\beta_1$}}}%
    \put(0.5953363,0.14902169){\color[rgb]{0,0,0}\makebox(0,0)[lb]{\smash{$\beta_2$}}}%
  \end{picture}%
\endgroup%
}}
\ar@{<->}[r]^{(BrE)}&
\vcenter{\hbox{
\begingroup%
  \makeatletter%
  \providecommand\color[2][]{%
    \errmessage{(Inkscape) Color is used for the text in Inkscape, but the package 'color.sty' is not loaded}%
    \renewcommand\color[2][]{}%
  }%
  \providecommand\transparent[1]{%
    \errmessage{(Inkscape) Transparency is used (non-zero) for the text in Inkscape, but the package 'transparent.sty' is not loaded}%
    \renewcommand\transparent[1]{}%
  }%
  \providecommand\rotatebox[2]{#2}%
  \ifx\svgwidth\undefined%
    \setlength{\unitlength}{140.539bp}%
    \ifx\svgscale\undefined%
      \relax%
    \else%
      \setlength{\unitlength}{\unitlength * \real{\svgscale}}%
    \fi%
  \else%
    \setlength{\unitlength}{\svgwidth}%
  \fi%
  \global\let\svgwidth\undefined%
  \global\let\svgscale\undefined%
  \makeatother%
  \begin{picture}(1,0.29062231)%
    \put(0,0){\includegraphics[width=\unitlength,page=1]{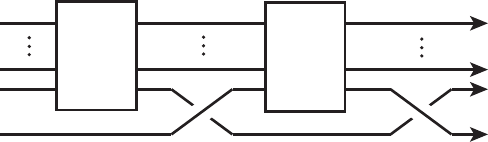}}%
    \put(0.16840854,0.14902169){\color[rgb]{0,0,0}\makebox(0,0)[lb]{\smash{$\beta_2$}}}%
    \put(0.59533623,0.14902169){\color[rgb]{0,0,0}\makebox(0,0)[lb]{\smash{$\beta_1$}}}%
  \end{picture}%
\endgroup%
}}
}
\]
\end{defn}

Then it is well-known that the exchange move $(BrE)$
can be generated by conjugations $(BrC)$ and $(+)$-(de)stabilizations $(BrS_+)^{\pm1}$.
Therefore it induces the equivalence in $\mathbf{SB}/\{(BrC), (BrS_+)\}$, and in $\SK$ as well via the closure map.
We denote the singular braid monoid $\mathbf{SB}$ modulo conjugation $(BrC)$ and exchange move $(BrE)$ by  $\SB$.
\[
\B := \mathbf{B}/\{(BrC), (BrE)\}\subset \SB:=\mathbf{SB}/\{(BrC),(BrE)\}.
\]

\begin{defn}[Resolutions on $\SB$]
For $\beta=\beta_1p\beta_2\in\SB$ with $p=\tau_i$ or $\xi_i$, the $\eta$-resolution $\mathcal{R}_\eta(\beta,p)$ of $\beta$ at $p$ is defined as
\[
\mathcal{R}_\eta(\beta,p):=\beta_1\mathcal{R}_\eta(p)\beta_2,
\]
where for each $\eta\in\{+,-,0\}$, $\mathcal{R}_\eta(p)$ is defined as 
\begin{align*}
\mathcal{R}_+(\tau_i)&:=\sigma_i, & 
\mathcal{R}_-(\tau_i)&:=\sigma_i^{-1}, &
\mathcal{R}_0(\tau_i)&:=e,\\
\mathcal{R}_+(\xi_i)&:=\sigma_i^2, & 
\mathcal{R}_-(\xi_i)&:=e, &
\mathcal{R}_0(\xi_i)&:=\sigma_i.
\end{align*}
\end{defn}

\subsubsection{Closures}
For any $\beta\in\mathbf{SB}$, a {\em closure $\hat\beta$ of $\beta$} is a link obtained by connecting the corresponding ends of a geometric braid $\tilde\beta$.
Then any singular link has a singular braid representation as follows.
\begin{thm}\cite{A, B}
For any singular link $K\in\SK$, there exists $\beta\in\mathbf{SB}$ such that the closure $\widehat{\beta}$ is equivalent to $K$.
\end{thm}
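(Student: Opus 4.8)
The plan is to prove this Alexander-type theorem for singular links by adapting Alexander's classical braiding procedure; the essential point is that singular points are isolated in a regular diagram, so they pose no real obstruction.

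First I would fix a regular diagram $D = D(K)$ of $K$ in $\R^2_{xz}$ and choose a point $*$ in its complement to play the role of the braid axis, after a small planar isotopy arranging that $*$ lies on no edge of $D$ and that no crossing or singular point sits on any of the finitely many radial rays I will use. Orienting the plane around $*$, I would call an edge of $D$ \emph{good} if its angular coordinate around $*$ increases weakly as the edge is traversed along the orientation, and \emph{bad} otherwise. The observation to record is that if every edge is good then $D$ is already a closed braid about $*$: a generic ray from $*$ meets $D$ in a fixed number $n$ of points, all crossings and singular points occur between adjacent strands, and reading off $\sigma_i^{\pm1}$ at an ordinary crossing and $\tau_i$ at a (transverse double point) singular point yields a word $\beta\in\mathbf{SB}_n\subset\mathbf{SB}$ whose closure $\widehat\beta$ is isotopic to $D$, hence equivalent to $K$ in $\SK$.

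The heart of the argument is then the braiding move that eliminates bad edges while staying inside the equivalence class of $K$. Given a bad arc, I would subdivide it into finitely many subarcs each of which is embedded and contains no crossing and no singular point — possible since crossings and singular points are finite in number — and then push each subarc radially across $*$, replacing it by an arc that winds the other way around $*$ and runs entirely over (or entirely under) every strand it meets. Each such replacement trades one bad subarc for two good ones and is realized by planar isotopy together with $(RM2)$ finger moves. A bookkeeping argument — for instance, counting the bad subarcs met by a fixed generic ray — shows the process terminates, after which the previous paragraph produces the desired $\beta$.

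The step I expect to be the main obstacle, and really the only departure from the classical proof, is checking that the braiding move is compatible with singular points: that a subarc carried entirely on one side $\{y>0\}$ or $\{y<0\}$ of the diagram can be slid past a singular point, so that the move neither creates, destroys, nor illegally passes through a singular point. I would handle this with $(RM2)$ (together with $(RM4)$ and $(RM5)$ in the case where a pushed strand must be carried across the neighborhood of a singular point), exploiting that the subarcs were chosen to avoid the singular points themselves. As an alternative route I would mention running the Yamada--Vogel algorithm on the state obtained by taking the oriented (Seifert-type) resolution at every crossing \emph{and} every singular point: both kinds of resolution produce coherently oriented circles, Vogel's reduction moves act on the region structure away from the marked singular points, and un-resolving at the end restores the $\tau_i$'s in braid position.
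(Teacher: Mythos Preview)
The paper does not supply its own proof of this statement: it is quoted as a known result, with the nonsingular case attributed to Alexander \cite{A} and the singular extension to Birman \cite{B}. Your sketch is a correct outline of precisely the argument in those references --- Alexander's braiding procedure, with singular points treated as additional isolated marked crossings that one simply avoids when subdividing bad arcs and then slides past using $(RM4)$ (and $(RM5)$ if needed). Both the direct braiding argument and the Yamada--Vogel alternative you mention are standard and either would be accepted; there is nothing further to compare against in the present paper.
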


However, a singular braid representation is not unique in general. Indeed, two singular braids which differs by the following moves have the equivalent closures.

\begin{thm}\cite{B}\label{thm:SBtoSK}
If $\alpha,\beta\in\mathbf{SB}$ are two singular braids with $\widehat\alpha=\widehat\beta$ in $\SK$, then they differs by a sequence of Markov moves, that is, braid conjugations $(BrC)$ and $(\pm)$-(de)stabilizations $(BrS_\pm)^{\pm1}$.

In other words, the closure $\widehat{(\cdot)}$ induces bijections
\begin{align*}
\widehat{(\cdot)}&:\mathbf{B}/\{(BrC), (BrS_\pm)\}=\B/\{(BrS_\pm)\}\to\K,\\
\widehat{(\cdot)}&:\mathbf{SB}/\{(BrC), (BrS_\pm)\}=\SB/\{(BrS_\pm)\}\to\SK.
\end{align*}
%
\end{thm}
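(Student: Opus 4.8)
The plan is to prove Theorem~\ref{thm:SBtoSK} by adapting a standard proof of the classical Markov theorem --- the braiding algorithm of Yamada and Vogel combined with Morton's analysis of Reidemeister moves --- to the singular setting, the key point being that the defining relations $(Br1)$, $(Br2)$, $(BrF)$ of the monoid $\mathbf{SB}_n$ are precisely calibrated to absorb the singular Reidemeister moves $(RM4)$ and $(RM5)$. The non-singular statement, that $\B/\{(BrS_\pm)\}\to\K$ via $\widehat{(\cdot)}$ is a bijection, is the classical Markov theorem; and the two reformulations in the statement are interchangeable because, as recalled above, the exchange move $(BrE)$ is generated by $(BrC)$ and $(BrS_+)^{\pm1}$, so $\SB/\{(BrS_\pm)\}=\mathbf{SB}/\{(BrC),(BrS_\pm)\}$. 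Note that a naive reduction to the non-singular case --- replacing each singular point by a fixed rigid tangle --- does not work, since equivalence in $\SK$ permits the two strands through a double point to move freely as long as the transverse double point persists; the singular structure must be tracked throughout.

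\emph{Braiding and independence.} Given a regular diagram $D$ of a singular link, I would run Seifert's algorithm, treating each singular point as a coherently split $4$-valent vertex (the orientation convention at a transverse double point is unambiguous), to obtain a system of Seifert circles, and then apply Vogel's moves --- which are $(RM2)$-type modifications performed away from the vertices and the crossings --- to make all Seifert circles coherently oriented. The result exhibits $D$ as the closure of a singular braid $\beta(D)\in\mathbf{SB}$, whose word records a box $\sigma_i^{\pm1}$ at each crossing and a box $\tau_i$ or $\xi_i$ at each singular point; this re-proves the existence statement of \cite{A,B} and gives surjectivity of $\widehat{(\cdot)}$. The second step is to show that $\beta(D)$ is well defined up to $(BrC)$ and $(BrS_\pm)^{\pm1}$: changing the order of Vogel's moves, the braid axis, or the basepoint on the closed braid alters the word only by conjugation and (de)stabilization, exactly as in the classical argument, the only new feature being that sliding a singular-point box along the closure or past adjacent strands is governed by $(Br1)$ and $(Br2)$, which already hold identically in $\mathbf{SB}$.

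\emph{Reidemeister invariance.} By the diagrammatic definition of $\SK$ it remains to show that if $D'$ is obtained from $D$ by a planar isotopy or one of $(RM1)$--$(RM5)$ (including reflections), then $\beta(D')$ is Markov-equivalent to $\beta(D)$. For planar isotopy and $(RM1)$--$(RM3)$ this is verbatim the classical analysis. For $(RM4)$, which slides a strand across a singular point, the braided local word changes by the relation $\sigma_i\tau_i=\tau_i\sigma_i$ of $(BrF)$ (the $\sigma_i^{-1}$-variants following via $(Br0)$ and $(BrF)$), up to conjugations and stabilizations introduced by re-braiding the altered disc. For $(RM5)$, the triangle move interchanging a crossing and a singular point across a third strand, the braided local word changes by the mixed relation $\sigma_i\sigma_j\tau_i=\tau_j\sigma_i\sigma_j$ of $(Br2)$ and its mirror. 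In each case $\beta(D')$ and $\beta(D)$ become equal in $\mathbf{SB}$ after the local identification, modulo the moves allowed in the previous step. Concatenating a finite sequence of such moves shows that $\alpha$ and $\beta$ are Markov-equivalent whenever $\widehat\alpha=\widehat\beta$ in $\SK$; together with independence this yields injectivity of $\widehat{(\cdot)}$, and hence the asserted bijection.

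The essential difficulty is localizing $(RM4)$ and $(RM5)$ correctly: one must arrange the braiding algorithm so that the effect of these moves near a singular point is expressible purely through the monoid relations $(Br1)$, $(Br2)$, $(BrF)$ plus conjugation and stabilization, which requires care with the orientations at the $4$-valent vertices so that Seifert's algorithm and Vogel's move behave predictably there. Once this bookkeeping is in place the remainder of the argument is a transcription of the classical Markov theorem. An alternative, more technical route would replace the whole argument by the Birman--Menasco braid-foliation machinery applied to singular links, but the Yamada--Vogel--Morton approach keeps the combinatorics closest to the monoid presentation of $\mathbf{SB}_n$, which is exactly what makes the singular moves transparent.
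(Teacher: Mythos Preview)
The paper does not supply its own proof of this theorem: it is stated with the citation \cite{B} (Birman, \emph{New points of view in knot theory}) and no argument follows. So there is nothing in the paper to compare your proposal against; the result is simply quoted as known background.

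That said, your outline is essentially the strategy of the cited source and of Gemein \cite{G}: extend Alexander's braiding via the Yamada--Vogel algorithm, then verify that the singular Reidemeister moves $(RM4)$ and $(RM5)$ are absorbed by the defining relations of $\mathbf{SB}_n$ modulo Markov moves. Your identification of the relevant relations is correct in spirit, though you should double-check the labelling: in this paper $(RM4)$ is the move sliding a strand over or under a singular crossing (so it matches the mixed $(Br2)$ relation $\sigma_i\sigma_j\tau_i=\tau_j\sigma_i\sigma_j$ after braiding), while $(RM5)$ is the ``flype'' rotating the singular crossing (matching $(BrF)$); different sources swap these labels, and the paper's figure captions do not disambiguate by formula. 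This is only a naming issue, not a mathematical gap. The one place where your sketch is genuinely thin is the claim that re-braiding after a local Reidemeister move introduces only conjugations and stabilizations: in the classical case this is the substantial part of Morton's or Traczyk's argument, and the singular version requires the same care, since a Vogel move performed near a singular vertex must be shown not to interact with the $\tau$-box in an uncontrolled way. You acknowledge this as ``the essential difficulty'' but do not indicate how it is resolved; in a full proof this is where most of the work lies.
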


\subsubsection{Singular rectilinear braid diagrams}\label{sec:rectilinear}
We consider a singular version of a rectilinear braid diagram as follows. 
\begin{defn}\cite{MM,NT}
A {\em singular rectilinear braid diagram} is a singular braid diagram consisting of horizontal and vertical oriented line segments satisfying the following.
\begin{enumerate}
\item All horizontal segments are oriented from left to right, and
\item at each double point, either a vertical segment passes over a horizontal segment, or intersects with a horizontal segment and make a singular point. 
\end{enumerate}
We denote the set of all singular rectilinear braid diagrams by $\bar{\mathbf{SB}}$. 
\end{defn}
Then there is a canonical way to realize a singular rectilinear braid diagram $\bar\beta\in\bar{\mathbf{SB}}$ as a singular braid diagram of $\beta\in\mathbf{SB}$ by slanting all vertical segments slightly, where the map is denoted by $sl:\bar{\mathbf{SB}}\to\mathbf{SB}$.

\begin{figure}[ht]
\[
\xymatrix{
\vcenter{\hbox{\includegraphics[scale=0.7]{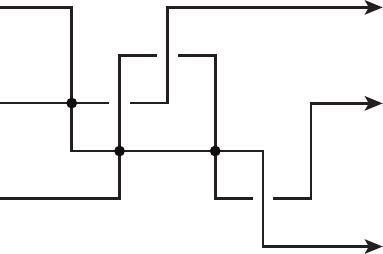}}}\quad\ar@{|->}[r]^{sl} & \quad\vcenter{\hbox{\includegraphics[scale=0.7]{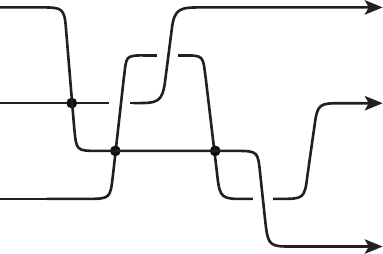}}}
}
\]
\caption{A singular rectilinear diagram $\bar\beta\in\bar{\mathbf{SB}}$ and corresponding singular braid $\beta\in\mathbf{SB}$}
\label{fig:rectilinearbraid}
\end{figure}

Indeed, this map is surjective as follows. Since for any braid $\beta$, one can construct a singular rectilinear braid diagram $\bar\beta$ with $sl(\bar\beta)=\beta$ by concatenating the diagrams below from the left according to the word representing $\beta$.
\begin{align*}
\bar\sigma_i&=\vcenter{\hbox{\includegraphics{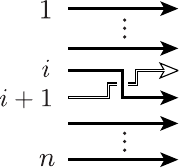}}}&
\bar\sigma_i^{-1}&=\vcenter{\hbox{\includegraphics{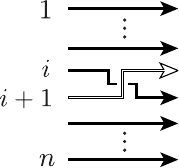}}}&
\bar\tau_i&=\vcenter{\hbox{\includegraphics{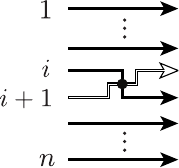}}}&
\bar\xi_i&=\vcenter{\hbox{\includegraphics{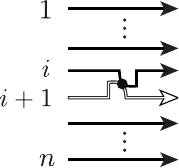}}}
\end{align*}
Here we interpret the non-transverse crossing in $\bar\xi_i$ as follows.
\[
\vcenter{\hbox{\includegraphics[scale=1.5]{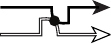}}}\qquad := \qquad
\vcenter{\hbox{\includegraphics[scale=1.5]{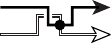}}}
\]

\subsection{Singular Legendrian links}

Let $\alpha_0=dz-ydx$ be a contact 1-form defined on $\R^3$ and $\xi_0=\ker \alpha_0$ be a plane distribution on $\R^3$, called the {\em standard contact structure}.
A {\em singular Legendrian link $L$} is a singular link which is {\em Legendrian} with respect to $\xi_0$. In other words, $L$ is tangent to the contact plane at each point of $L$. Two singular Legendrian links are said to be {\em equivalent} if and only if they are equivalent as singular links by preserving the Legendrian-ness during the homotopy.
We denote the sets of all equivalence classes of nonsingular and singular Legendrian links by $\L$ and $\SL$, respectively.

The projection $\pi_F:\R^3\to\R^2_{xz}$ is called the {\em front projection}, and for any $L\in\SL$, $\pi_F(L)$ consists of piecewise smooth closed curves in the $xz$-plane {\em without} a vertical tangency by the Legendrian-ness. Instead, it may have cusps. See Figure~\ref{fig:gridsquareexample}(b) for example.

Moreover at each double point in $\pi_F(L)$, the crossing information comes naturally since the $y$-coordinate, namely $L$ itself, can be recovered from $\pi_F(L)$ by using the Legendrian condition $dz-ydx=0$.
Hence at each nonsingular crossing in $\pi_F(L)$, the strand with a smaller slope is always lying over the strand with a larger slope.
Therefore we consider $\pi_F(L)$ as the front projection equipped with the crossing information.

The projection near each singular point looks like a picture $L_\bullet=\vcenter{\hbox{\includegraphics{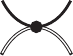}}}$ with non-transverse intersection because the same $y$-coordinates yield the same slopes $dz/dx$. For each singular point, we indicate a dot to avoid confusion with ordinary double points (crossings) in the front projections.
Then there are four cases of the projection $\pi_F(L_\bullet)$ according to the orientations on each arc as depicted in Figure~\ref{fig:singularpoints}.

\begin{figure}[ht]
\begin{align*}
\pi_F(L_{\bullet}^{N})&=\vcenter{\hbox{\includegraphics[scale=0.7]{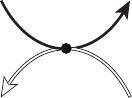}}}&
\pi_F(L_{\bullet}^{E})&=\vcenter{\hbox{\includegraphics[scale=0.7]{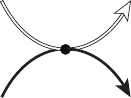}}}&
\pi_F(L_{\bullet}^{W})&=\vcenter{\hbox{\includegraphics[scale=0.7]{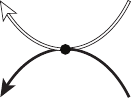}}}&
\pi_F(L_{\bullet}^{S})&=\vcenter{\hbox{\includegraphics[scale=0.7]{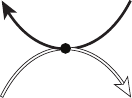}}}
\end{align*}
\caption{Four projections $\pi_F(L_\bullet)$ of $L_\bullet$ near a singular point}
\label{fig:singularpoints}
\end{figure}

We say that $\pi_F(L)$ is {\em regular} if 
\begin{enumerate}
\item it has no triple (or more) point, none of its double points is a cusp; and 
\item it is parameterized like one of front projections at every singular point depicted in Figure~\ref{fig:singularpoints}.
\end{enumerate}
Note that it is not hard to make the front projection $\pi_F(L)$ regular by perturbing $L$ slightly. Moreover, there is a combinatorial description for the equivalence between regular front projections in $\SL$ as follows.

\begin{prop}\label{prop:frontmove}\cite[Proposition~2.1]{ABK}
Let $L_1, L_2\in\SL$ be singular Legendrian links with $\pi_F(L_1)$ and $\pi_F(L_2)$ regular. Then $L_1$ and $L_2$ are equivalent in $\SL$ if and only if $\pi_F(L_1)$ and $\pi_F(L_2)$ are related by a planar isotopy and a finite sequence of moves $(LRM1)\sim (LRM6)$ including their reflections about the $x$ and $z$-axes, depicted in Figure~\ref{fig:frontmove}.
\end{prop}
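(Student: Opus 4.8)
This is the singular Legendrian analogue of the classical front--Reidemeister theorem, so the plan is to run the standard one--parameter general position argument, adapted to keep track of the dots.

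The ``if'' direction is a finite check: for each of $(LRM1)$--$(LRM6)$ and each of its reflections I would write down an explicit Legendrian isotopy realizing the move. Away from cusps and dots the $y$--coordinate is recovered from the front by $y=dz/dx$; at each dot it is the common slope of the two tangent branches; and at cusps it extends continuously. So interpolating between the two sides of a move in the $xz$--plane while keeping all slopes finite (no vertical tangency) and keeping the number and local type -- one of the four pictures of Figure~\ref{fig:singularpoints} -- of the dots fixed throughout yields the desired homotopy through singular Legendrian links. Only finitely many local models occur, so this direction is routine.

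For the ``only if'' direction, suppose $L_1$ and $L_2$ are equivalent in $\SL$ via a homotopy $L_t$, $t\in[0,1]$, through singular Legendrian links. First I would put $L_t$ into Legendrian general position: arrange that $\pi_F(L_t)$ is regular for all $t$ outside a finite set $0<t_1<\dots<t_k<1$, that on each complementary interval the front moves only by an ambient planar isotopy, and that at each $t_j$ the front undergoes exactly one codimension--one degeneration, localized at a single point of the $xz$--plane. The degenerations of a singular Legendrian front are: the three classical ones (birth/death of a pair of cusps; a Legendrian tangency of two strands, subject to the ``smaller slope over larger slope'' rule, creating or removing two crossings; a triple point), a strand sweeping across a cusp, and the events involving a dot -- a strand passing over or under a dot, a dot changing its compass type $N/E/W/S$ as its common slope crosses that of a neighbouring feature, a collision of two dots, and a collision of a dot with a cusp. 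Keeping the family inside the class of Legendrian fronts (finite slopes, admissible local model at every dot) throughout this perturbation is a transversality argument in the space of Legendrian fronts.

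The combinatorial heart, and the step I expect to be the main obstacle, is then to verify that each of the degenerations above changes the front by a finite sequence of $(LRM1)$--$(LRM6)$ and reflections, up to planar isotopy: the classical degenerations and the strand--cusp event recover $(LRM1)$--$(LRM3)$, the dot events recover $(LRM4)$--$(LRM6)$, and the composite events (strand through a cusp, dot--cusp and dot--dot collisions, dot compass change) are decomposed into the listed moves. Composing these over $t_1,\dots,t_k$ and splicing in the planar isotopies on the intervals between them produces the required finite sequence of moves from $\pi_F(L_1)$ to $\pi_F(L_2)$. The real risk is an incomplete enumeration of the codimension--one strata -- especially the mixed cusp/dot and dot/dot strata -- or the discovery that the six moves (with reflections) do not generate all of them; certifying completeness of the move list against the stratification is the substantive work. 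An alternative that trims part of this enumeration is to excise small balls about the dots, apply the relative version of the classical front theorem to the Legendrian tangle in the complement (giving $(LRM1)$--$(LRM3)$), and handle separately the finitely many times a strand or a second dot enters such a ball together with internal rotations of the ball's contents (giving $(LRM4)$--$(LRM6)$); the bookkeeping of these ball--interaction events then becomes the crux.
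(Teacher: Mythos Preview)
The paper does not supply its own proof of this proposition: it is quoted verbatim as \cite[Proposition~2.1]{ABK} and used as a black box, so there is nothing in the present paper to compare your argument against.

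That said, your sketch is the expected strategy for a Reidemeister-type result and is broadly sound. A few comments if you intend to flesh it out. First, your list of codimension-one events is over-generous: a collision of two dots, or of a dot with a cusp, is a codimension-two phenomenon (it imposes two independent conditions on the $1$-parameter family), so a generic Legendrian isotopy avoids it entirely and you should not need to decompose such events into the listed moves. Second, the ``dot changing its compass type $N/E/W/S$'' is not a degeneration of the front at all---the compass type depends only on the orientations of the two branches, which are fixed throughout the isotopy---so that item can be dropped. What remains is exactly: the three classical events giving $(LRM1)$--$(LRM3)$; a strand sweeping past a cusp (which, as the paper remarks after the proposition, is actually subsumed by $(LRM6)$ plus planar isotopy via $(LRM4)$); a strand sweeping past a dot, giving $(LRM5)$; and the local swap of the two branches at a dot, giving $(LRM6)$. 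With that trimmed list the enumeration is complete and matches the six moves, so the ``real risk'' you flag largely evaporates once the codimension count is done carefully.
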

\begin{rmk}
The move $(LRM4)$ is indeed a composition of $(LRM6)$ and a planar isotopy.
\end{rmk}

\begin{figure}[ht]
\begin{align*}
\vcenter{\hbox{\includegraphics{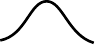}}}\quad&\xleftrightarrow{(LRM1)}\quad
\vcenter{\hbox{\includegraphics{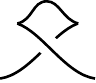}}}&
\vcenter{\hbox{\includegraphics{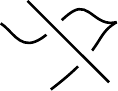}}}\quad&\xleftrightarrow{(LRM2)}\quad
\vcenter{\hbox{\includegraphics{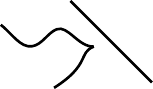}}}\\
\vcenter{\hbox{\includegraphics{KRM3_1.pdf}}}\quad&\xleftrightarrow{(LRM3)}\quad
\vcenter{\hbox{\includegraphics{KRM3_2.pdf}}}&
\vcenter{\hbox{\includegraphics{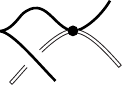}}}\quad&\xleftrightarrow{(LRM4)}\quad
\vcenter{\hbox{\includegraphics{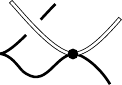}}}\\
\vcenter{\hbox{\includegraphics{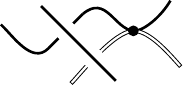}}}\quad&\xleftrightarrow{(LRM5)}\quad
\vcenter{\hbox{\includegraphics{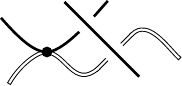}}}&
\vcenter{\hbox{\includegraphics{L_singularcrossing.pdf}}}\quad&\xleftrightarrow{(LRM6)}\quad
\vcenter{\hbox{\includegraphics{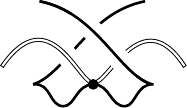}}}
\end{align*}
\caption{Reidemeister moves for $\SL$}
\label{fig:frontmove}
\end{figure}

\begin{cor}\label{cor:LegTr}
The following global moves in $\SL$ give equivalent pairs of singular Legendrian links.
\begin{align*}
\vcenter{\hbox{\includegraphics[scale=0.9]{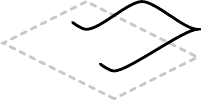}}}\quad&\xleftrightarrow{(LTr_H)}\quad
\vcenter{\hbox{\includegraphics[scale=0.9]{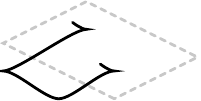}}}&
\vcenter{\hbox{\includegraphics[scale=0.9]{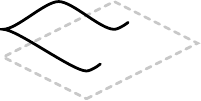}}}\quad&\xleftrightarrow{(LTr_V)}\quad
\vcenter{\hbox{\includegraphics[scale=0.9]{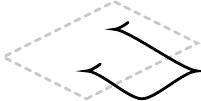}}}\\
\vcenter{\hbox{\includegraphics[scale=1]{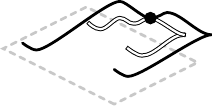}}}\quad&\xleftrightarrow{(LTr_H)}\quad
\vcenter{\hbox{\includegraphics[scale=1]{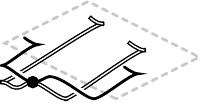}}}&
\vcenter{\hbox{\includegraphics[scale=1]{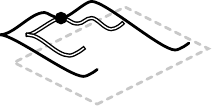}}}\quad&\xleftrightarrow{(LTr_V)}\quad
\vcenter{\hbox{\includegraphics[scale=1]{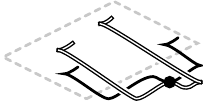}}}
\end{align*}
\end{cor}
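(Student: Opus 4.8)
The plan is to realize each of the four global moves in the statement---the nonsingular and the singular versions of $(LTr_H)$ and $(LTr_V)$---as a finite composition of the local front moves $(LRM1)$--$(LRM6)$ of Proposition~\ref{prop:frontmove}, their reflections about the coordinate axes, and planar isotopies. In every one of the pictures the two sides differ only by transporting a single arc of the front---or, in the singular pictures, a single arc together with the singular point it is incident to---across the portion of the diagram that lies in its way. First I would cut this transport into elementary steps, in each of which the moving arc is pushed past exactly one feature, namely an ordinary crossing, a cusp, or a singular point, and then identify each step with one of the listed moves.

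For the nonsingular $(LTr_H)$ and $(LTr_V)$ this is the Legendrian incarnation of Cromwell's grid translation, and the verification is essentially that of Ng--Thurston~\cite{NT}: cutting the transport into passages of the moving arc past individual features of the rest of the diagram, a passage across a cusp is a move $(LRM1)$ and a passage across a crossing is a move $(LRM2)$ or $(LRM3)$, in each case possibly after a reflection about the $x$- or $z$-axis, and after finitely many such steps the arc has reached the other side. At every intermediate stage the front is kept regular by an arbitrarily small perturbation, which is harmless since Proposition~\ref{prop:frontmove} is stated for regular fronts and small perturbations do not change the class in $\SL$.

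The singular versions proceed in the same fashion, with one extra ingredient needed when the transported arc is incident to a singular point: the transport must then carry the singular point along with it, which is modelled locally by the singular front moves $(LRM4)$, $(LRM5)$ and $(LRM6)$ of Figure~\ref{fig:frontmove}---bearing in mind that, by the remark following Proposition~\ref{prop:frontmove}, $(LRM4)$ is itself $(LRM6)$ composed with a planar isotopy---used alongside the ordinary moves that reposition the remaining, nonsingular strands. The step I expect to be the main obstacle is the case analysis at the singular point: there are the four front-types $\pi_F(L_{\bullet}^{N})$, $\pi_F(L_{\bullet}^{E})$, $\pi_F(L_{\bullet}^{W})$, $\pi_F(L_{\bullet}^{S})$ of Figure~\ref{fig:singularpoints}, and for each of them one must check that the passage required is legal---selecting, where necessary, the reflection of the relevant move permitted by Proposition~\ref{prop:frontmove}---and that the front stays regular on a neighbourhood of the singular point at every stage. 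Once all four cases are verified, concatenating the elementary steps yields the promised sequences of front moves relating the two sides of $(LTr_H)$ and $(LTr_V)$, both in the nonsingular and in the singular pictures, which is what the corollary asserts.
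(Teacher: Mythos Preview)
Your proposal is correct and is exactly the kind of argument the paper has in mind: the paper's own proof reads, in its entirety, ``The proof is not hard and left as an exercise for the reader.'' Your plan of decomposing each translation into elementary passages of the moving arc past one feature at a time, and matching each passage to one of $(LRM1)$--$(LRM6)$ (or a reflection thereof), is the natural way to carry out that exercise and is consistent with how the paper later uses this corollary in Proposition~\ref{prop:basicmoves}.
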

We call these two moves {\em Legendrian horizontal and vertical translations}, denoted by $(LTr_H)$ and $(LTr_V)$, respectively.
\begin{proof}
The proof is not hard and left as an exercise for the reader.
\end{proof}

\begin{defn}[Resolutions on $\SL$]\label{defn:LegResolutions}
Let $L\in\SL$ be a singular Legendrian link and $p$ be a singular point of $L$. Then for each $\eta\in\{+,-,0\}$, we define the {\em $\eta$-resolution} $\mathcal{R}_\eta(L,p)$ at $p$ by the diagram replacement as follows including their reflections about $z$-axis.
\begin{align*}
\mathcal{R}_+\left(\vcenter{\hbox{\includegraphics[scale=0.7]{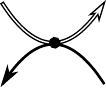}}},p\right)&:=
\vcenter{\hbox{\includegraphics[scale=0.7]{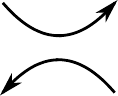}}} &
\mathcal{R}_-\left(\vcenter{\hbox{\includegraphics[scale=0.7]{L_singularcrossing_RL.pdf}}},p\right)&:=
\vcenter{\hbox{\includegraphics[scale=0.7]{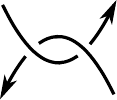}}}&
\mathcal{R}_0\left(\vcenter{\hbox{\includegraphics[scale=0.7]{L_singularcrossing_RL.pdf}}},p\right)&:=
\vcenter{\hbox{\includegraphics[scale=0.7]{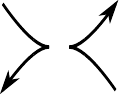}}}\\
\mathcal{R}_+\left(\vcenter{\hbox{\includegraphics[scale=0.7]{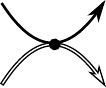}}},p\right)&:=
\vcenter{\hbox{\includegraphics[scale=0.7]{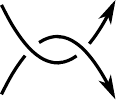}}} &
\mathcal{R}_-\left(\vcenter{\hbox{\includegraphics[scale=0.7]{L_singularcrossing_RR.pdf}}},p\right)&:=
\vcenter{\hbox{\includegraphics[scale=0.7]{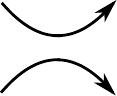}}}& 
\mathcal{R}_0\left(\vcenter{\hbox{\includegraphics[scale=0.7]{L_singularcrossing_RR.pdf}}},p\right)&:=
\vcenter{\hbox{\includegraphics[scale=0.7]{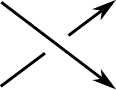}}}.
\end{align*}
\end{defn}

Recall the {\em Thurston-Bennequin number $tb(L)$} for a Legendrian link $L\in\L$, which is a classical invariant for $\L$ and measures how many times the contact planes rotate when we travel along $L$. More precisely, it is defined as
\begin{align*}
tb(L) =&   \#\left\{
\vcenter{\hbox{\includegraphics{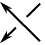}}},
\vcenter{\hbox{\includegraphics{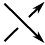}}}
\subset \pi_F(L)\right\}
- \#\left\{         
\vcenter{\hbox{\includegraphics{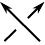}}},
\vcenter{\hbox{\includegraphics{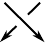}}},
\vcenter{\hbox{\includegraphics{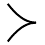}}}\subset \pi_F(L)
\right\}.
\end{align*}

In $\SL$, one can extend $tb$ as follows.
\begin{defn}\cite[\S2.4]{ABK}
Let $L\in\SL$. Then the Thurston-Bennequin number of $L$ is an integer defined as
\begin{align*}
tb(L) =&   \#\left\{
\vcenter{\hbox{\includegraphics{L_small_crossing_L.pdf}}},
\vcenter{\hbox{\includegraphics{L_small_crossing_R.pdf}}},
\vcenter{\hbox{\includegraphics{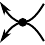}}}, 
\vcenter{\hbox{\includegraphics{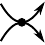}}} \subset \pi_F(L)
\right\}
- \#\left\{         
\vcenter{\hbox{\includegraphics{L_small_crossing_U.pdf}}},
\vcenter{\hbox{\includegraphics{L_small_crossing_D.pdf}}},
\vcenter{\hbox{\includegraphics{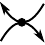}}}, 
\vcenter{\hbox{\includegraphics{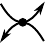}}},
\vcenter{\hbox{\includegraphics{L_small_cusp_R.pdf}}}\subset \pi_F(L)
\right\}.
\end{align*}
\end{defn}
It is not hard to see that $tb$ on $\SL$ is invariant under singular Legendrian Reidemeister moves, and therefore it is well-defined. 

\begin{rmk}
For $L\in\SK$, the invariant $tb(L)$ can be defined as the linking number between $L$ and its positive pushoff $L^+$ and so it does not depend on the front projection. See \cite[\S2.4]{ABK}.
\end{rmk}

Especially, on $\SL$, the Thurston-Bennequin number $tb$ behaves under resolutions as follows.

\begin{lem}\cite[\S2.4]{ABK}\label{lem:resolutionandtb} Let $L\in \SL$ and $p$ be a singular point of $L$. Then
\[
tb(\mathcal{R}_\pm(L,p)) = tb(L) \pm 1,\quad tb(\mathcal{R}_0(L,p))=tb(L).
\]
\end{lem}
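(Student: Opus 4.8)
The plan is to combine the \emph{locality} of the resolutions with the fact that $tb$ on $\SL$ is, by construction, a signed count of local features of a regular front: each crossing is weighted $\pm1$ according to its Legendrian‑front sign, each right cusp is weighted $-1$, and each singular point is weighted $+1$ if it is of type $LL$ or $RR$ and $-1$ if of type $LR$ or $RL$.

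First I would record that, by Definition~\ref{defn:LegResolutions}, $\mathcal{R}_\eta(L,p)$ is obtained from $L$ by a replacement supported in an arbitrarily small disk $D$ about $p$, and that the new front is again regular after, if necessary, a small perturbation that does not change $tb$ (by invariance under the moves of Figure~\ref{fig:frontmove}). Since $\pi_F(\mathcal{R}_\eta(L,p))$ agrees with $\pi_F(L)$ outside $D$, every term of the $tb$‑formula attached to a feature outside $D$ cancels in the difference, and
\[
tb(\mathcal{R}_\eta(L,p))-tb(L)=w_\eta-\varepsilon_p ,
\]
where $\varepsilon_p\in\{+1,-1\}$ is the weight of the singular point $p$ and $w_\eta$ is the combined weight of all crossings and right cusps of $\mathcal{R}_\eta(L,p)$ inside $D$; both depend only on the picture in $D$. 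Thus the lemma reduces to the assertions $w_0=\varepsilon_p$ and $w_{\pm}=\varepsilon_p\pm1$.

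These I would verify by a finite case analysis. Up to the reflections about the $x$‑ and $z$‑axes that are already built into Definition~\ref{defn:LegResolutions}, there are only the two model singular points — the ``$RR$'' and ``$RL$'' pictures — and Definition~\ref{defn:LegResolutions} gives $\mathcal{R}_+$, $\mathcal{R}_-$, $\mathcal{R}_0$ of each one explicitly. For each of the six resulting local pictures I would read off $w_\eta$ by listing the crossings it contains (with signs via the slope rule) and its right cusps. For example, one of the two $\pm$‑resolutions of the ``$RR$'' picture introduces a bigon, i.e.\ two crossings of the same sign, while the opposite resolution introduces none and the $0$‑resolution introduces exactly one; for the ``$RL$'' picture an oriented smoothing forces a left cusp and a right cusp to appear. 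In every case the three identities above are confirmed — the weights in the definition of $tb$ on $\SL$ are chosen precisely so that the weight of a singular point coincides with the weight of its own $0$‑resolution, and a $\pm$‑resolution shifts that weight by $\pm1$.

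The one genuinely delicate part is this bookkeeping: one must remember that a resolution can replace a single tangency by two crossings, or can create a cusp pair, and one must do the count consistently for every reflected copy of both models. As a conceptual cross‑check, and an alternative that sidesteps the cusp count, one can use $tb(L)=lk(L,L^{+})$ from the remark above: the positive pushoff commutes with resolutions, so $\mathcal{R}_\eta(L,p)^{+}=\mathcal{R}_\eta(L^{+},p^{+})$, and turning the tangency at $p$ into a $\pm$‑crossing changes the signed number of intersections between $L$ and $L^{+}$ near $p$ by exactly $\pm1$, while the $0$‑resolution leaves it unchanged. Justifying how the pushoff behaves at a singular point is then the main point to pin down in this second approach.
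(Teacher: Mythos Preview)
The paper does not give its own proof of this lemma; it is simply quoted from \cite[\S2.4]{ABK}. Your approach---observing that $tb$ is a signed count of local front features, that the resolution is supported in a small disk, and then doing the finite case check on the oriented models of Definition~\ref{defn:LegResolutions}---is correct and is exactly the natural argument; the alternative via $tb(L)=lk(L,L^+)$ is also valid once one checks how the pushoff is defined near a singular point, which is again a local matter. There is nothing to compare against here beyond noting that your direct combinatorial verification is presumably what \cite{ABK} does as well.
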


\subsubsection{A map onto $\SK$}
We consider a canonical map $\|\cdot\|:\SL\to\SK$ defined by taking the underlying singular links for a given singular Legendrian link. 
Diagramatically, it may be defined as depicted in Figure~\ref{fig:SLSK}.

\begin{figure}[ht]
\[
\xymatrix@C=1pc@R=1pc{
\SL\ar[d]_{\|\cdot\|}&&
\vcenter{\hbox{\includegraphics{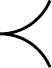}}}\ar@{|->}[d]&
\vcenter{\hbox{\includegraphics{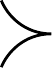}}}\ar@{|->}[d]&
\vcenter{\hbox{\includegraphics{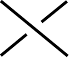}}}\ar@{|->}[d]&
\vcenter{\hbox{\includegraphics{L_singularcrossing.pdf}}}\ar@{|->}[d]\\
\SK& &
\vcenter{\hbox{\includegraphics{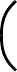}}}&
\vcenter{\hbox{\includegraphics{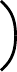}}}&
\vcenter{\hbox{\includegraphics{K_crossing.pdf}}}&
\vcenter{\hbox{\includegraphics{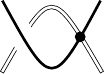}}}
}
\]
\caption{A pictorial definition of $\|\cdot\|:\SL\to\SK$}
\label{fig:SLSK}
\end{figure}

It is obvious that this map is (infinitely) many-to-one, and there are three moves which induce the equivalences not in $\SL$ but in $\SK$ via $\|\cdot\|$ as follows.

\begin{defn}[Legendrian (de)stabilizations]
Let $L\in\SL$. Then Legendrian $(\pm)$-stabilizations $(LS_\pm)$ on $L$ add $(\pm)$-{\em zigzags} as follows, and their inverse operations are called the {\em Legendrian $(\pm)$-destabilizations}, denoted by $(LS_\pm)^{-1}$.
\[
\xymatrix@C=3pc{
\vcenter{\hbox{\includegraphics{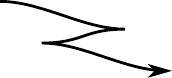}}}\quad
\ar@<-.5ex>[r]_-{(LS_+)^{-1}}&\ar@<-.5ex>[l]_-{(LS_+)}
\quad\vcenter{\hbox{\includegraphics{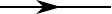}}}\quad
\ar@<.5ex>[r]^-{(LS_-)}&\ar@<.5ex>[l]^-{(LS_-)^{-1}}
\quad\vcenter{\hbox{\includegraphics{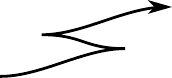}}}
}
\]
\end{defn}
\begin{defn}[Legendrian flype]A {\em Legendrian flype} $(LF)$ is a local move which changes the order of two contiguous Legendrian singular crossing and ordinary crossing in the front projection as follows.
\[
\xymatrix@C=3pc{
\vcenter{\hbox{\includegraphics{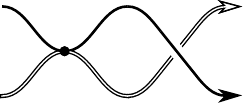}}}\quad
\ar@{<->}[r]^-{(LF)}&
\quad\vcenter{\hbox{\includegraphics{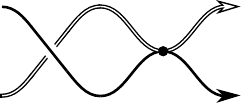}}}
}
\]
\end{defn}

Indeed, these three moves generate all singular Legendrian links in $\SL$ which are the same in $\SK$ as follows.

\begin{prop}\cite{ABK, FT}\label{prop:SLSK}
Let $L_1,L_2\in\SL$. Then $\|L_1\|$ and $\|L_2\|$ are equivalent in $\SK$ if and only if they differ by a sequence of $(\pm)$-(de)stabilizations $(LS_\pm)^{\pm1}$ and Legendrian flypes $(LF)$.
In other words, the map $\|\cdot\|$ induces bijections
\[
\|\cdot\|:\L/\{(LS_\pm)\}\to\K,\qquad
\|\cdot\|:\SL/\{(LS_\pm), (LF)\}\to\SK.
\] 
%
\end{prop}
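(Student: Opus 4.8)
The plan is to prove the ``only if'' direction by a finite inspection of local pictures, to reduce the bijection statements to injectivity of the induced maps by checking surjectivity of $\|\cdot\|$, and to prove injectivity (the ``if'' direction) by reducing to the nonsingular theorem of Fuchs--Tabachnikov \cite{FT} after resolving all singular points.

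First I would handle well-definedness and the easy direction together. One checks move by move that $\|\cdot\|$ --- which smooths cusps to bends and turns a dotted Legendrian singular crossing into an $(RM5)$-type singular crossing, as in Figure~\ref{fig:SLSK} --- carries each Legendrian Reidemeister move $(LRM1)$--$(LRM6)$ of Figure~\ref{fig:frontmove} to a planar isotopy or to one of $(RM1)$--$(RM5)$ and its reflections, carries a $(\pm)$-zigzag to an innocuous bend so that $(LS_\pm)^{\pm1}$ becomes a planar isotopy, and carries the flype $(LF)$ to a composition of a planar isotopy with $(RM4)$ and $(RM5)$. This shows that $\|\cdot\|$ descends to well-defined maps on $\L/\{(LS_\pm)\}$ and on $\SL/\{(LS_\pm),(LF)\}$, and that $L_1,L_2$ differing by the listed moves forces $\|L_1\|=\|L_2\|$ in $\SK$. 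Next, for surjectivity, given $K\in\SK$ with a regular diagram I would perturb it so that every vertical tangency becomes a cusp, so that at every ordinary crossing the overstrand has the smaller slope (inserting a zigzag, which does not change $\|\cdot\|$, wherever this fails), and so that every singular point is put into one of the four local forms of Figure~\ref{fig:singularpoints}; the result is the front of some $L\in\SL$ with $\|L\|=K$. Hence it suffices to prove the induced maps are injective, the nonsingular statement $\L/\{(LS_\pm)\}\to\K$ being exactly \cite{FT}.

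For the ``if'' direction, suppose $\|L_1\|$ and $\|L_2\|$ are equivalent in $\SK$. Since such an equivalence is a homotopy along which the preimages of the double points vary continuously, it matches the singular points of $L_1$ with those of $L_2$ bijectively. I would then apply the $(+)$-resolution of Definition~\ref{defn:LegResolutions} at every singular point to obtain nonsingular Legendrian links $L_1^+, L_2^+\in\L$; because $\|\cdot\|$ commutes with resolutions, $\|L_i^+\|$ is the corresponding full $(+)$-resolution of $\|L_i\|$, so $\|L_1^+\|$ and $\|L_2^+\|$ are equivalent in $\K$, and Fuchs--Tabachnikov \cite{FT} produces a finite sequence of Legendrian isotopies (planar isotopies and the moves $(LRM1)$--$(LRM3)$) and $(\pm)$-(de)stabilizations $(LS_\pm)^{\pm1}$ relating them. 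The remaining step is to lift this sequence back through the resolutions: keeping track of a small disk around each resolution site, any move carried out off all these disks lifts verbatim to the singular picture, while any move interacting with a resolution site is absorbed using a Legendrian flype $(LF)$ together with $(LRM6)$ --- the flype being exactly the move that commutes a singular crossing past an ordinary one, and hence, with $(LRM6)$, enough to slide a singular point freely along the link. Reinserting the singular points then exhibits $L_1$ and $L_2$ as differing by $(LS_\pm)^{\pm1}$ and $(LF)$ modulo Legendrian isotopy, which is the claim.

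The hard part will be precisely this last lifting step. A priori the Fuchs--Tabachnikov sequence might, for instance, cancel in a Reidemeister~II move an ordinary crossing that is a resolution site, and such a move cannot be lifted, since a singular point cannot be destroyed. To rule this out I would run the Fuchs--Tabachnikov argument \emph{relative to} marked neighborhoods of the resolution sites --- equivalently, for Legendrian tangles with fixed ends, after first using $(LF)$ and $(LRM6)$ to isotope all the singular points into a standard cluster --- so that every move in the resulting sequence is compatible with un-resolving, at the cost of bookkeeping the flypes that this relativization introduces. Establishing this relative version of Fuchs--Tabachnikov and carrying out the flype bookkeeping is where the real work lies; everything else is routine diagram manipulation, and the Thurston--Bennequin count of Lemma~\ref{lem:resolutionandtb} serves as a consistency check on the resolution steps.
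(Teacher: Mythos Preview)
The paper does not give its own proof of this proposition: it is stated with the citation \cite{ABK, FT} and used as a black box. So there is no proof in the paper to compare against; I will simply assess your argument on its own terms.

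Your treatment of well-definedness, the easy direction, and surjectivity is fine and standard. The gap is exactly where you locate it, and it is a real gap rather than a bookkeeping nuisance. Once you apply $\mathcal{R}_+$ at every singular point, the Fuchs--Tabachnikov sequence you obtain for $L_1^+$ and $L_2^+$ has no reason to respect the resolution sites: the crossing produced by a $(+)$-resolution is an ordinary crossing and may be cancelled, slid through cusps, or created and destroyed arbitrarily many times along the way. Your proposed remedy, a ``relative Fuchs--Tabachnikov'' for tangles with marked neighborhoods kept rigid, is not something you can simply invoke; it is a new theorem of roughly the same strength as what you are trying to prove, and you have not indicated how to establish it. In particular, first ``clustering'' the singular points using $(LF)$ and $(LRM6)$ already presupposes that you can make the two fronts agree near the singularities, which is part of the conclusion.

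The approach that actually works, and is the one taken in \cite{FT} for the nonsingular case and extended in \cite{ABK}, is the opposite of reduction-by-resolution: one shows directly that every topological Reidemeister move $(RM1)$--$(RM5)$, applied to a diagram that has been put in Legendrian front form, can be realized by Legendrian Reidemeister moves together with $(LS_\pm)^{\pm1}$ and, for the singular moves $(RM4)$ and $(RM5)$, the flype $(LF)$. This is a finite local check and avoids the lifting problem entirely, since one never leaves the singular category. If you want to salvage your write-up, replace the resolution argument by this move-by-move Legendrianization of $(RM4)$ and $(RM5)$; the flype $(LF)$ is precisely what is needed to handle the orientation cases of $(RM5)$ that $(LRM6)$ alone cannot reach.
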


\subsubsection{Sums of singular Legendrian tangles}
Let us consider singular Legendrian tangles. 
For example, four projections of $L_\bullet$ depicted in Figure~\ref{fig:singularpoints} can be considered as singular Legendrian tangles.

\begin{defn}
The {\em sum} $L_{T_1}\oplus L_{T_2}\in\SL$ of two singular Legendrian tangles $L_{T_1}$ and $L_{T_2}$ is a singular Legendrian link obtained by gluing two tangles as follows.
\[
L_{T_1}\oplus L_{T_2} = \vcenter{\hbox{
\begingroup%
  \makeatletter%
  \providecommand\color[2][]{%
    \errmessage{(Inkscape) Color is used for the text in Inkscape, but the package 'color.sty' is not loaded}%
    \renewcommand\color[2][]{}%
  }%
  \providecommand\transparent[1]{%
    \errmessage{(Inkscape) Transparency is used (non-zero) for the text in Inkscape, but the package 'transparent.sty' is not loaded}%
    \renewcommand\transparent[1]{}%
  }%
  \providecommand\rotatebox[2]{#2}%
  \ifx\svgwidth\undefined%
    \setlength{\unitlength}{112.69932583bp}%
    \ifx\svgscale\undefined%
      \relax%
    \else%
      \setlength{\unitlength}{\unitlength * \real{\svgscale}}%
    \fi%
  \else%
    \setlength{\unitlength}{\svgwidth}%
  \fi%
  \global\let\svgwidth\undefined%
  \global\let\svgscale\undefined%
  \makeatother%
  \begin{picture}(1,0.6460677)%
    \put(0,0){\includegraphics[width=\unitlength,page=1]{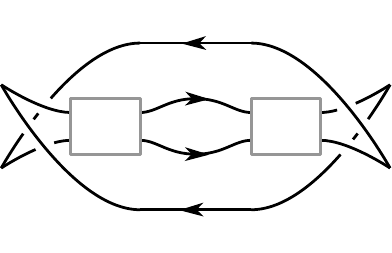}}%
    \put(0.20353363,0.29751633){\color[rgb]{0,0,0}\makebox(0,0)[lb]{\smash{$L_{T_1}$}}}%
    \put(0.66427523,0.29751633){\color[rgb]{0,0,0}\makebox(0,0)[lb]{\smash{$L_{T_2}$}}}%
  \end{picture}%
\endgroup%
}}\in\SL
\]

Especially, the sum $L_\bullet \oplus L_T$ of $L_\bullet$ and $L_T$ is  called the {\em Legendrian tangle closure of $L_T$} and denoted by $\hat{L_T}$.
\[
\widehat{\vcenter{\hbox{
\begingroup%
  \makeatletter%
  \providecommand\color[2][]{%
    \errmessage{(Inkscape) Color is used for the text in Inkscape, but the package 'color.sty' is not loaded}%
    \renewcommand\color[2][]{}%
  }%
  \providecommand\transparent[1]{%
    \errmessage{(Inkscape) Transparency is used (non-zero) for the text in Inkscape, but the package 'transparent.sty' is not loaded}%
    \renewcommand\transparent[1]{}%
  }%
  \providecommand\rotatebox[2]{#2}%
  \ifx\svgwidth\undefined%
    \setlength{\unitlength}{34.087636bp}%
    \ifx\svgscale\undefined%
      \relax%
    \else%
      \setlength{\unitlength}{\unitlength * \real{\svgscale}}%
    \fi%
  \else%
    \setlength{\unitlength}{\svgwidth}%
  \fi%
  \global\let\svgwidth\undefined%
  \global\let\svgscale\undefined%
  \makeatother%
  \begin{picture}(1,0.60964187)%
    \put(0,0){\includegraphics[width=\unitlength]{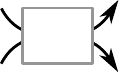}}%
    \put(0.33196332,0.23002214){\color[rgb]{0,0,0}\makebox(0,0)[lb]{\smash{$L_T$}}}%
  \end{picture}%
\endgroup%
}}}=\vcenter{\hbox{
\begingroup%
  \makeatletter%
  \providecommand\color[2][]{%
    \errmessage{(Inkscape) Color is used for the text in Inkscape, but the package 'color.sty' is not loaded}%
    \renewcommand\color[2][]{}%
  }%
  \providecommand\transparent[1]{%
    \errmessage{(Inkscape) Transparency is used (non-zero) for the text in Inkscape, but the package 'transparent.sty' is not loaded}%
    \renewcommand\transparent[1]{}%
  }%
  \providecommand\rotatebox[2]{#2}%
  \ifx\svgwidth\undefined%
    \setlength{\unitlength}{96.7bp}%
    \ifx\svgscale\undefined%
      \relax%
    \else%
      \setlength{\unitlength}{\unitlength * \real{\svgscale}}%
    \fi%
  \else%
    \setlength{\unitlength}{\svgwidth}%
  \fi%
  \global\let\svgwidth\undefined%
  \global\let\svgscale\undefined%
  \makeatother%
  \begin{picture}(1,0.75206825)%
    \put(0,0){\includegraphics[width=\unitlength]{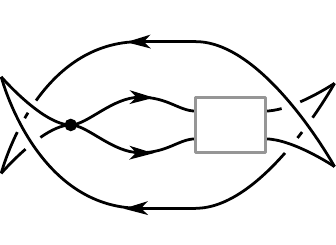}}%
    \put(0.62746539,0.32886105){\color[rgb]{0,0,0}\makebox(0,0)[lb]{\smash{$L_T$}}}%
  \end{picture}%
\endgroup%
}}\in\SL
\]
We also say that $L_T$ is a {\em tangle representative} for $L\in\SL$ if $L=\hat{L_T}$.
\end{defn}

\begin{rmk}
The sum $L_{T_1}\oplus L_{T_2}$ of two tangles is the same as the {\em singular connected sum} $\hat{L_{T_1}}\otimes \hat{L_{T_2}}$ of two Legendrian tangle closures, which is defined in \cite[Definition~4.1]{ABK}.
\end{rmk}

\begin{ex}\label{ex:L00}
Let $L_{\bigcirc\!\!\!\bigcirc}\in\SL$ be the closure of $L_\bullet$ as follows.
\[
L_{\bigcirc\!\!\!\bigcirc}=L_\bullet\oplus L_\bullet=
\vcenter{\hbox{\includegraphics{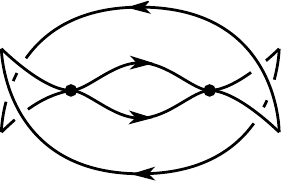}}}= 
\widehat{\vcenter{\hbox{\includegraphics[scale=1.3]{L_small_singularcrossing_RR.pdf}}}} = \hat{L_\bullet^{E}}\in\SL
\]
Then 
\begin{align*}
tb(L_{\bigcirc\!\!\!\bigcirc}) 
&= \#\left\{
\vcenter{\hbox{\includegraphics[scale=1]{L_small_crossing_L.pdf}}},
\vcenter{\hbox{\includegraphics[scale=1]{L_small_crossing_R.pdf}}},
\vcenter{\hbox{\includegraphics[scale=1]{L_small_singularcrossing_LL.pdf}}}, 
\vcenter{\hbox{\includegraphics[scale=1]{L_small_singularcrossing_RR.pdf}}} \subset \pi(L_{\bigcirc\!\!\!\bigcirc})
\right\} - \#\left\{         
\vcenter{\hbox{\includegraphics[scale=1]{L_small_crossing_U.pdf}}},
\vcenter{\hbox{\includegraphics[scale=1]{L_small_crossing_D.pdf}}},
\vcenter{\hbox{\includegraphics[scale=1]{L_small_singularcrossing_LR.pdf}}}, 
\vcenter{\hbox{\includegraphics[scale=1]{L_small_singularcrossing_RL.pdf}}},
\vcenter{\hbox{\includegraphics[scale=1]{L_small_cusp_R.pdf}}}\subset \pi(L_{\bigcirc\!\!\!\bigcirc})
\right\} \\
&=( 2 + 0 + 0 + 2 ) - ( 2 + 2 + 0 + 0 + 2 )  = -2.
\end{align*}

Moreover, its topological type is
\[
\|L_{\bigcirc\!\!\!\bigcirc}\| = \vcenter{\hbox{\includegraphics[scale=0.8]{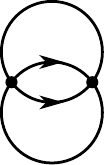}}}\in\SK,
\]
and whatever orientations are assigned on $L_{\bigcirc\!\!\!\bigcirc}$, they are all the same not only in $\SK$ but also in $\SL$ by the symmetry that $L_{\bigcirc\!\!\!\bigcirc}$ has.
\end{ex}

\begin{lem}\cite[Lemma~5.2]{ABK}\label{lem:tanglerep}
For any $L\in\SL$ with a singular point $p$, there exists a tangle representative $L_T$ for $L$ such that
$p$ corresponds to the singular point in $L_\bullet\subset L_\bullet\oplus L_T=\hat{L_T}$.
\end{lem}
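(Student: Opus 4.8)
The plan is to realize $L$ literally in the form $\widehat{L_T}$ by a Legendrian isotopy that carries the given singular point $p$ to the distinguished singular point of $\widehat{L_T}$, i.e.\ the one lying in the $L_\bullet$ summand of $L_\bullet\oplus L_T$. By Proposition~\ref{prop:frontmove} it suffices to produce, starting from $\pi_F(L)$ and applying only planar isotopy and the moves $(LRM1)$--$(LRM6)$ together with their reflections about the axes, a regular front projection that is drawn exactly as the displayed picture of $\widehat{L_T}$ with $p$ at the marked dot.

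First I would isolate $p$. Fix a small round disk $D$ centered at $p$, taken so small that $\pi_F(L)\cap D$ is one of the four standard local models $\pi_F(L_\bullet^N),\pi_F(L_\bullet^E),\pi_F(L_\bullet^W),\pi_F(L_\bullet^S)$ of Figure~\ref{fig:singularpoints}, while $\pi_F(L)\cap(\R^2\setminus\mathrm{int}\,D)$ is a regular Legendrian tangle $T_0$ meeting $\partial D$ transversely in four points. Thus $\pi_F(L)$ is the union of the tangle $T_0$ and a single standard singular cap inside $D$. Using $(LRM6)$ and its reflections about the $x$- and $z$-axes, together with short auxiliary strings of $(LRM1),(LRM2),(LRM3)$ near $\partial D$, I would then convert the cap in $D$ into the specific copy of $L_\bullet$ used in the definitions of the sum $\oplus$ and of $\widehat{(\cdot)}$ (namely $L_\bullet^E$, in the convention of Example~\ref{ex:L00}); any discrepancy between the orientation pattern at $p$ and that of $L_\bullet^E$ is absorbed into a small modification of $T_0$ just outside $D$. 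After this reduction $\pi_F(L)=L_\bullet^E\cup T_0$, glued along $\partial D$, with $p$ the singular point of the $L_\bullet^E$ piece and both of its strands exiting $D$ to the right.

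Finally, and this is the heart of the argument, I would put $T_0$ into \emph{box form}. Keeping the four endpoints fixed on $\partial D$ and keeping $T_0$ disjoint from a fixed neighbourhood of $D$ and of the arcs that will close it up, I would Legendrian-isotope $T_0$ so that it lies inside a rectangular box $B$ placed to the right of $D$, with its two "entering" endpoints joined to the two rightward strands of $L_\bullet^E$ and its two "returning" endpoints closed off by nested arcs exactly as in the picture of $\widehat{L_T}$; one then reads off $L_T$ as the contents of $B$. The subtlety is purely Legendrian: shrinking and translating a tangle into a prescribed box must not create vertical tangencies nor violate the slope-ordering at crossings, so one cannot simply invoke a smooth "shrink into a ball" isotopy. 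I would handle this by first routing the two outgoing strands of $L_\bullet^E$ monotonically rightward (possible by the previous step), and then compressing $T_0$ into $B$ by repeated use of the Legendrian horizontal and vertical translations $(LTr_H),(LTr_V)$ of Corollary~\ref{cor:LegTr} — precisely the moves designed to transport pieces of a front projection horizontally and vertically without introducing vertical tangencies — interspersed with $(LRM1)$--$(LRM3)$ to remove incidental kinks and the closing-up arcs, and to verify the glued picture is regular.

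Since every move used lies among $(LRM1)$--$(LRM6)$ and their reflections (or is a planar isotopy), Proposition~\ref{prop:frontmove} guarantees that the final diagram represents a link Legendrian-equivalent to $L$; by construction it is $\widehat{L_T}$ with $p$ at the singular point of the $L_\bullet$ summand, which is exactly the assertion. I expect the only genuine obstacle to be the bookkeeping in the last step — checking that $(LTr_H),(LTr_V)$ really do suffice to reach box form for an arbitrary $T_0$ in all four orientation cases, and that the nested closing arcs can be inserted Legendrianly without forcing a vertical tangency; the isolation and cap-normalization steps are routine.
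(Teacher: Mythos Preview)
The paper does not actually prove this lemma: it is quoted verbatim as \cite[Lemma~5.2]{ABK} and used as a black box (its only role in the paper is to feed into Corollary~\ref{cor:complement} and, through that, into the proof of Theorem~\ref{thm:G_times}). So there is no in-paper argument to compare your proposal against.

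On the merits of your sketch itself, two comments. First, you over-specify the target: the closure $\hat{L_T}=L_\bullet\oplus L_T$ is defined with the \emph{unoriented} local model $L_\bullet$, so there is no need to force the cap at $p$ into the particular oriented form $L_\bullet^E$; whichever of $L_\bullet^N,L_\bullet^E,L_\bullet^W,L_\bullet^S$ you find in your small disk $D$ already matches the definition, and you cannot in any case change that orientation pattern by Reidemeister moves since orientations are global data on $L$. What $(LRM6)$ genuinely buys you is not a change of orientation type but the freedom to route all four strands of the cap out the same side of $D$, which is what you need for the sum picture. Second, the ``box form'' step is where the real content lies, and your appeal to $(LTr_H),(LTr_V)$ is on the right track but is not yet a proof: those moves transport a single outermost strand across the diagram, so to corral all of $T_0$ into a box on one side of $D$ you must argue inductively (e.g.\ on the number of cusps and crossings of $T_0$ lying to the left of $D$), checking at each stage that the nested closing arcs can be threaded in without creating vertical tangencies. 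This is exactly the sort of careful front-projection bookkeeping carried out in \cite{ABK}, and you should either supply it or defer to that reference as the present paper does.
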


\begin{cor}\label{cor:complement}
Let $L\in\SL$. If a tangle $L_T$ is contained in $L$, then there exists a tangle $L_T^c$ such that
\[
L_T\oplus L_T^c = L.
\]
\end{cor}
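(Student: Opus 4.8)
The plan is to produce $L_T^c$ by cutting the front projection of $L$ along a circle and keeping the complementary piece.

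\emph{Setting up the cut.} First I would fix a regular front projection $\pi_F(L)$ realizing the inclusion of $L_T$: a closed disk $D$ in the plane $\R^2_{xz}$ with $L_T=\pi_F(L)\cap D$, whose boundary $\partial D$ meets $\pi_F(L)$ transversally in the four boundary points of $L_T$ and contains no double point or cusp of $\pi_F(L)$. A planar isotopy of $\pi_F(L)$ supported in a thin annular neighbourhood of $\partial D$ lets me assume that near $\partial D$ the link consists of four short horizontal segments; since this isotopy only straightens short arcs it can be chosen to stay within the Legendrian category (no new vertical tangencies, hence no effect on the $\SL$-class). Presenting $D$ as a tangle box in the sense of the definition of $\oplus$ then amounts to declaring which two of these four segments lie on the left edge and which two lie on the right edge, and I make the choice so that the two ``right'' segments of $D$ are exactly the ones that $\oplus$ identifies directly, the other two becoming the closure arcs.

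\emph{The complementary tangle, and regluing.} Working in $S^2=\R^2_{xz}\cup\{\infty\}$, set $D^c:=S^2\setminus\mathrm{int}\,D$, again a closed disk with $\partial D^c=\partial D$, and let $L_T^c$ be the restriction of $L$ to $D^c$, presented as a tangle box whose left edge is the ``right'' edge of $D$. Then $L_T^c$ is a genuine singular Legendrian tangle: the conditions defining a regular front — absence of triple points, double points away from cusps, and each singular point parameterized as one of the four pictures of Figure~\ref{fig:singularpoints} — are local and are inherited from $\pi_F(L)$, the horizontal normal form near $\partial D^c$ was arranged above, and the orientations at the four boundary points agree with those of $L_T$ because both are induced from the single oriented link $L$. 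Gluing $D$ and $D^c$ back along their common boundary reproduces $\pi_F(L)$ verbatim, and by the choice of edges made above this regluing is precisely the operation $L_T\oplus L_T^c$ read off from the defining picture of $\oplus$; hence $L_T\oplus L_T^c=L$ in $\SL$.

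\emph{Where the work is.} The only non-formal point is the second half of the first step: normalizing the front near the cutting circle to the rigid $2{+}2$ horizontal-strand boundary required by $\oplus$ without leaving the Legendrian category and without disturbing the four orientations. This is exactly the compatibility implicit in the phrase ``$L_T$ is contained in $L$'', so once that hypothesis is unpacked the corollary is immediate. Alternatively, one may run the same cut-and-reglue argument through the identification of $L_{T_1}\oplus L_{T_2}$ with the singular connected sum $\hat{L_{T_1}}\otimes\hat{L_{T_2}}$ recorded in the preceding remark, which turns the regluing verification into a bookkeeping check on the connected-sum operation.
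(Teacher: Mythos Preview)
There is a genuine gap. The operation $\oplus$ is not ``glue two disks along their common boundary circle''; it is a specific planar construction in which two tangle boxes sit side by side in $\R^2_{xz}$ and are joined by prescribed cusped arcs running above and below. Your complementary disk $D^c$ passes through $\infty\in S^2$, so $L\cap D^c$ is not a Legendrian tangle in a box in $\R^2_{xz}$ in any evident way, and no amount of local normalization near $\partial D$ changes that. The step you flag as ``the only non-formal point'' --- straightening four short arcs --- is not the difficulty; the difficulty is moving the entire complement $L\setminus L_T$, which may wind all the way around $D$, into a standard box placed next to $D$ while staying in the same $\SL$-class. Front projections do not admit planar isotopies through $\infty$: slopes determine the crossing data, and pushing an outermost strand ``around the back'' already requires the global moves of Corollary~\ref{cor:LegTr} rather than a local straightening. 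Your regluing step therefore recovers $\pi_F(L)$ tautologically, but it does not exhibit that front in the form demanded by the definition of $\oplus$.

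The paper's proof avoids this entirely by reducing to Lemma~\ref{lem:tanglerep}. One replaces $L_T$ inside $L$ by the standard singular tangle $L_\bullet$, obtaining a link $L'$ with a distinguished singular point $p$; Lemma~\ref{lem:tanglerep} (imported from \cite{ABK}) then furnishes a tangle $L_T^c$ with $L'=L_T^c\oplus L_\bullet$ and $p$ sitting in the $L_\bullet$ slot, and swapping $L_\bullet$ back for $L_T$ yields $L=L_T^c\oplus L_T$. All of the work of ``dragging the complement into a box'' is absorbed into that lemma, which is precisely why the statement is recorded as a corollary rather than proved from scratch.
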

\begin{proof}
Let $L'$ be a singular Legendrian link obtained by replacing $L_T$ in $L$ with $L_\bullet$. We write this replacement as
\[
L':=(L\setminus L_T)\sqcup L_\bullet,
\]
and we denote the singular point in $L_\bullet$ by $p$.
By Lemma~\ref{lem:tanglerep}, there exists a tangle $L_T^c$ such that
\[
L'=\hat{L_T^c}=L_T^c\oplus L_\bullet,
\]
where a singular point in $L_\bullet$ corresponds to $p$. Then we have
\begin{align*}
L&=(L'\setminus L_\bullet)\sqcup L_T = \left((L_T^c\oplus L_\bullet)\setminus L_\bullet\right)\sqcup L_T=L_T^c\oplus L_T
\end{align*}
as desired.
\end{proof}

\begin{thm}\cite[Theorem~1.3]{ABK}\label{thm:deg2unit}
Suppose that $L_T$ is a tangle summand of $L_{\bigcirc\!\!\!\bigcirc}$. If $L_T$ contains only 1 singular point, then its closure $\hat{L_T}$ is the same as $L_{\bigcirc\!\!\!\bigcirc}$.
\end{thm}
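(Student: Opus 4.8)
The plan is to reduce the statement to a count of singular points together with a sharp Thurston--Bennequin estimate and a rigidity statement for the (topologically trivial) underlying link type of $L_{\bigcirc\!\!\!\bigcirc}$. First I would invoke Corollary~\ref{cor:complement} to produce a complementary tangle $L_T^c$ with $L_T\oplus L_T^c=L_{\bigcirc\!\!\!\bigcirc}$. Since the number of singular points is an invariant of a singular Legendrian link and is additive under the sum $\oplus$, and $L_{\bigcirc\!\!\!\bigcirc}$ has exactly two singular points by Example~\ref{ex:L00} while $L_T$ has exactly one, the tangle $L_T^c$ also has exactly one singular point. Hence both closures $\hat{L_T}=L_\bullet\oplus L_T$ and $\hat{L_T^c}=L_\bullet\oplus L_T^c$ have exactly two singular points, and the goal becomes $\hat{L_T}\simeq L_{\bigcirc\!\!\!\bigcirc}$ in $\SL$.

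Next I would pin down the topological type and the Thurston--Bennequin number of $\hat{L_T}$. Applying $\|\cdot\|$ and using that it carries $\oplus$ to the topological tangle sum ($\otimes$ on closures), one has $\|L_{\bigcirc\!\!\!\bigcirc}\|=\|\hat{L_T}\|\otimes\|\hat{L_T^c}\|$; because the underlying singular link of $L_{\bigcirc\!\!\!\bigcirc}$ from Example~\ref{ex:L00} is topologically trivial (two unknotted unlinked circles glued at two double points), the sub-tangle $\|L_T^c\|$ is an unknotted unlinked tangle, so replacing it by $L_\bullet$ changes nothing topologically and $\|\hat{L_T}\|=\|L_{\bigcirc\!\!\!\bigcirc}\|$ (and symmetrically $\|\hat{L_T^c}\|=\|L_{\bigcirc\!\!\!\bigcirc}\|$). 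For $tb$ I would use its additivity under $\oplus$, namely $tb(L_{T_1}\oplus L_{T_2})=tb(\hat{L_{T_1}})+tb(\hat{L_{T_2}})-tb(L_{\bigcirc\!\!\!\bigcirc})$, proved by a local count over the gluing region, with the normalisation forced by $\hat{L_\bullet}=L_\bullet\oplus L_\bullet=L_{\bigcirc\!\!\!\bigcirc}$ and $tb(L_{\bigcirc\!\!\!\bigcirc})=-2$; this gives $tb(\hat{L_T})+tb(\hat{L_T^c})=2\,tb(L_{\bigcirc\!\!\!\bigcirc})=-4$. On the other hand, one can choose resolutions $\eta_1,\eta_2$ with cancelling effect on $tb$ (one positive and one negative) for which $\mathcal{R}_{\eta_1}\mathcal{R}_{\eta_2}\|L_{\bigcirc\!\!\!\bigcirc}\|$ is the $2$-component unlink; then by Lemma~\ref{lem:resolutionandtb}, $tb\big(\mathcal{R}_{\eta_1}\mathcal{R}_{\eta_2}(\hat{L_T})\big)=tb(\hat{L_T})$, and the Bennequin inequality for the unlink gives $tb(\hat{L_T})\le -2$, and likewise $tb(\hat{L_T^c})\le -2$. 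Combined with the displayed equality, $tb(\hat{L_T})=tb(\hat{L_T^c})=-2=tb(L_{\bigcirc\!\!\!\bigcirc})$.

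Finally I would close by rigidity: the link type $\|L_{\bigcirc\!\!\!\bigcirc}\|$ is Legendrian simple, a representative being determined up to equivalence in $\SL$ by $tb$ together with the rotation numbers of its resolutions, and the symmetry of $L_{\bigcirc\!\!\!\bigcirc}$ recorded in Example~\ref{ex:L00} leaves, for $tb=-2$, a unique such representative (alternatively, resolve both singular points of $\hat{L_T}$ and of $L_{\bigcirc\!\!\!\bigcirc}$, apply the classification of Legendrian unlinks to see that the resolutions coincide, and then reassemble). Since $\hat{L_T}$ and $L_{\bigcirc\!\!\!\bigcirc}$ realise the same topological type with the same maximal value of $tb$, they are equivalent in $\SL$, as desired.

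The hard part is this last step, upgrading ``$\|\hat{L_T}\|=\|L_{\bigcirc\!\!\!\bigcirc}\|$ and $tb(\hat{L_T})=tb(L_{\bigcirc\!\!\!\bigcirc})$'' to genuine equivalence $\hat{L_T}\simeq L_{\bigcirc\!\!\!\bigcirc}$, since a priori inequivalent singular Legendrian links can share a topological type, a $tb$ value, and even all their resolutions. This is where the finer results of \cite{ABK}---the singular Bennequin inequality and the classification of singular Legendrian links of small complexity, identifying $L_{\bigcirc\!\!\!\bigcirc}$ as the unique maximal-$tb$ representative of its type---must be brought in. The remaining ingredients, the additivity of $tb$ and the topological triviality and indecomposability of $\|L_{\bigcirc\!\!\!\bigcirc}\|$, are routine local bookkeeping.
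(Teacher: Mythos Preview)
The paper does not prove this theorem at all: it is quoted verbatim as \cite[Theorem~1.3]{ABK} and used as a black box in the proof of Theorem~\ref{thm:G_times}. So there is no ``paper's own proof'' to compare against; the actual argument lives entirely in \cite{ABK}.

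That said, your sketch has two genuine gaps. First, the topological step is not justified: from $\|L_T\oplus L_T^c\|=\|L_{\bigcirc\!\!\!\bigcirc}\|$ you assert that $\|L_T^c\|$ is an ``unknotted unlinked tangle'' and hence $\|\hat{L_T}\|=\|L_{\bigcirc\!\!\!\bigcirc}\|$. This is a prime-decomposition statement for singular links (or for the singular connected sum $\otimes$) which you have not proved and which is not available in the present paper. Second, and more seriously, your final ``rigidity'' step is circular. You appeal to a classification from \cite{ABK} asserting that $L_{\bigcirc\!\!\!\bigcirc}$ is the unique singular Legendrian representative of its topological type with $tb=-2$; but that uniqueness is exactly the content (or an immediate consequence) of \cite[Theorem~1.3]{ABK}, the theorem you are trying to prove. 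Your own closing paragraph concedes this: once you grant the ``finer results of \cite{ABK}'' needed for that step, you have already granted the theorem. The $tb$-additivity and Bennequin-inequality portions of your outline are reasonable heuristics, but they do not by themselves pin down the Legendrian isotopy class, and the argument as written does not stand independently of the cited result.
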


\subsection{Singular transverse links}
A {\em singular transverse link $T$} in $(\R^3,\alpha_0)$ is a singular link which is {\em positively transverse} to the contact plane at each point in $T$. In other words, the pull-back of $\alpha_0=dz-ydx$ by $T$ is precisely a {\em positive} volume form on $n S^1$.
Similar to singular Legendrian links, two singular transverse links are {\em equivalent} if and only if they are equivalent as singular links by preserving the transversality during the homotopy.
We denote the sets of all equivalence classes of nonsingular and singular transverse links by $\T$ and $\ST$, respectively.

For a transverse link $T\in\ST$, the front projection $\pi_F(T)$ is a smooth immersion of $nS^1$ in $\R^2_{xz}$, and it locally looks a transverse intersection near a singular point of $T$ in general.
Hence the front projection $\pi_F(T)$ for any $T\in\ST$ can be realized as a diagram of a singular link in $\SK$ without any changes, and moreover, by perturbing $T$ slightly, we may assume that $\pi_F(T)$ is regular in the sense of the regular diagram for $\SK$. 
However, the positive transversality prohibits the appearances of the front projections as depicted in Figure~\ref{fig:forbiddentransversalfront}.

\begin{figure}[ht]
\[
\xymatrix{
\vcenter{\hbox{\includegraphics{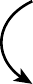}}}&
\vcenter{\hbox{\includegraphics{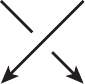}}}&
\vcenter{\hbox{\includegraphics{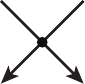}}}&
\vcenter{\hbox{\includegraphics{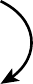}}}
}
\]
\caption{Forbidden front projections for singular transverse links}
\label{fig:forbiddentransversalfront}
\end{figure}

On the other hand, it is not hard to check that any regular diagram $D(K)$ for $K\in\SK$ without projections listed in Figure~\ref{fig:forbiddentransversalfront} can be realized as a front projection of a singular transverse link.
Furthermore, any planar isotopy and (singular) Reidemeister move without making the forbidden front projections induce an equivalence in $\ST$. Hence the equivalence in $\ST$ can be given by a sequence of planar isotopies and singular Reidemeister moves without forbidden front projections. More precisely, one can find the set of singular transverse Reidemeister moves as depicted in Figure~\ref{fig:transverseRM}. Notice that the first Reidemeister move $(RM1)$ always contains a downward cusp and so it is not allowed in $\ST$. This is summarized as follows and the proof is obvious and omitted.

\begin{figure}[ht]
\begin{align*}
\vcenter{\hbox{\includegraphics[scale=0.9]{grid_R.pdf}}}&\xleftrightarrow{(TRM0_1)}
\vcenter{\hbox{\includegraphics[scale=0.9]{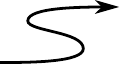}}} &
\vcenter{\hbox{\includegraphics[scale=0.9]{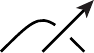}}} &\xleftrightarrow{(TRM0_2)}
\vcenter{\hbox{\includegraphics[scale=0.9]{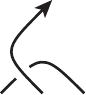}}} &
\vcenter{\hbox{\includegraphics[scale=0.9]{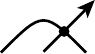}}} &\xleftrightarrow{(TRM0_3)}
\vcenter{\hbox{\includegraphics[scale=0.9]{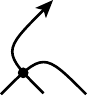}}}\\
\vcenter{\hbox{\includegraphics[scale=0.9]{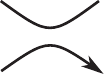}}} &\xleftrightarrow{(TRM2_1)}
\vcenter{\hbox{\includegraphics[scale=0.9]{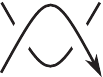}}} &
\vcenter{\hbox{\includegraphics[scale=0.9]{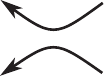}}} &\xleftrightarrow{(TRM2_2)}
\vcenter{\hbox{\includegraphics[scale=0.9]{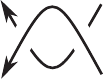}}} &
\vcenter{\hbox{\includegraphics[scale=0.9]{KRM3_1.pdf}}} &\xleftrightarrow{(TRM3)}
\vcenter{\hbox{\includegraphics[scale=0.9]{KRM3_2.pdf}}}\\
\vcenter{\hbox{\includegraphics[scale=0.9]{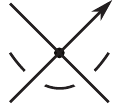}}} &\xleftrightarrow{(TRM4_1)}
\vcenter{\hbox{\includegraphics[scale=0.9]{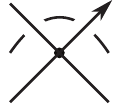}}} &
\vcenter{\hbox{\includegraphics[scale=0.9]{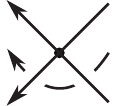}}} &\xleftrightarrow{(TRM4_2)}
\vcenter{\hbox{\includegraphics[scale=0.9]{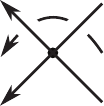}}} &
\vcenter{\hbox{\includegraphics[scale=0.9]{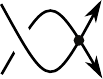}}} &\xleftrightarrow{(TRM5)}
\vcenter{\hbox{\includegraphics[scale=0.9]{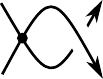}}}
\end{align*}
\caption{Reidemeister moves for $\ST$}
\label{fig:transverseRM}
\end{figure}

\begin{thm}
Let $T_1, T_2\in\ST$ be singular transverse links with regular $\pi_F(T_1)$ and $\pi_F(T_2)$ in the sense of the regular projection for $\SK$.
Then $T_1$ and $T_2$ are equivalent in $\ST$ if and only if $\pi_F(T_1)$ and $\pi_F(T_2)$ are related by a finite sequence of moves $(TRM0_*)\sim (TRM5)$ including their reflections about the $z$-axis, depicted in Figure~\ref{fig:transverseRM}.
\end{thm}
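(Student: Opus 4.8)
The plan is to run the standard transverse Reidemeister argument in the $xz$-front projection, with two adjustments: the forbidden local pictures of Figure~\ref{fig:forbiddentransversalfront} take the place of the usual ``no downward cusp'' constraint, and one must keep track of the four-valent vertices coming from singular points. Call a regular $\SK$-diagram \emph{admissible} if it contains none of the local pictures of Figure~\ref{fig:forbiddentransversalfront}. The backbone will be the realizability lemma: \textbf{(i)} a diagram $D$ equals $\pi_F(T)$ for some $T\in\ST$ precisely when $D$ is admissible; \textbf{(ii)} a generic smooth family $\{D_t\}_{t\in[0,1]}$ of admissible diagrams lifts to a homotopy $\{T_t\}$ through singular transverse links with the double-point preimages varying continuously; and conversely \textbf{(iii)} any homotopy through singular transverse links projects, after an arbitrarily small perturbation rel endpoints, to a family of admissible diagrams that is regular except at finitely many $t$ and that undergoes a single codimension-one degeneration within the space of admissible diagrams at each exceptional time. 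To prove the lemma I would reconstruct $y$ from $D$: away from the vertical tangencies of $D$ one is forced to take $y$ near the slope $dz/dx$; across a vertical tangency one regularizes, which is possible exactly because admissibility makes every turnaround point upward; and then $z$ is reparameterized on each component to make $T^*(dz-y\,dx)$ a positive form. Near a singular point the two branches get independent slopes, which is legitimate because a singular point of a transverse link is an honest transverse double point in $\R^3$ that merely projects to a crossing-like picture. I would also record here that reflection of the front about the $z$-axis is induced by the contactomorphism $(x,y,z)\mapsto(-x,-y,z)$, which fixes $\alpha_0=dz-y\,dx$ and hence preserves $\ST$, whereas reflection about the $x$-axis turns the allowed turnarounds into the forbidden configurations of Figure~\ref{fig:forbiddentransversalfront}; this is why only $z$-axis reflections survive.

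Granting the lemma, the ``if'' direction becomes a finite check. Each move of Figure~\ref{fig:transverseRM} and each of its $z$-reflections is, by construction, a $1$-parameter family of admissible diagrams, so \textbf{(ii)} converts it into an equivalence in $\ST$; for $(TRM4_1)$, $(TRM4_2)$ and $(TRM5)$ one reads off from the local picture that the singular point's preimage is carried along continuously, so the output is genuinely a singular-transverse equivalence. Planar isotopy of fronts presents no difficulty: an admissible planar isotopy lifts directly by \textbf{(ii)}.

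For the ``only if'' direction I would start from a homotopy through singular transverse links from $T_1$ to $T_2$, apply \textbf{(iii)}, and note that between consecutive exceptional times the front changes by an admissible planar isotopy. It then remains to classify the codimension-one degenerations of an admissible regular diagram and identify each with a move of Figure~\ref{fig:transverseRM}, up to $z$-reflection: a triple point is $(TRM3)$; a tangency of two strands is $(TRM2_1)$ or $(TRM2_2)$ according to the branch orientations, the remaining $RM2$ configurations being impossible because an admissible family cannot pass through a picture of Figure~\ref{fig:forbiddentransversalfront}; a collision of a crossing with a vertical tangency is $(TRM0_2)$ or $(TRM0_3)$; a birth or death of a canceling pair of vertical tangencies on a single strand is $(TRM0_1)$; and the degenerations in which a four-valent vertex participates are $(TRM4_1)$, $(TRM4_2)$ (a strand sliding past a singular vertex) and $(TRM5)$ --- precisely the instances of $(RM4)$ and $(RM5)$ of Figure~\ref{fig:S_Rmoves} that avoid every forbidden picture, all others being unrealizable by an admissible family. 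An $RM1$-type event is likewise impossible in an admissible family, consistently with the invariance of the self-linking number under transverse homotopy. This exhausts Figure~\ref{fig:transverseRM} and closes the argument.

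The only non-routine ingredient is the realizability lemma \textbf{(i)}--\textbf{(iii)}; everything else is bookkeeping. Its proof is the standard transverse-Reidemeister argument transported to the $xz$-front and augmented by four-valent vertices, and I expect the delicate point to be the vertex analysis: verifying that, under admissibility, $(TRM4_1)$, $(TRM4_2)$ and $(TRM5)$ really are all the codimension-one events in which a singular point takes part, and that the $y$-reconstruction of \textbf{(ii)} can be carried consistently through such an event so that the lift stays transverse.
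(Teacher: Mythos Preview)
Your approach is correct and is exactly the standard argument one would give; the paper itself omits the proof entirely, saying only that ``the proof is obvious and omitted'' after observing that admissible diagrams lift to singular transverse links and that Reidemeister moves avoiding the forbidden pictures induce equivalences in $\ST$. So you are supplying what the paper does not.

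One small correction to your codimension-one bookkeeping: $(TRM0_3)$ is not a collision of an \emph{ordinary} crossing with a vertical tangency but of a \emph{singular} point with one (compare Figure~\ref{fig:TRM0_3}, where $(TRM0_3)$ is matched with the Legendrian move $(LRM4)$ involving a singular crossing). Thus $(TRM0_3)$ belongs in your list of degenerations in which a four-valent vertex participates, alongside $(TRM4_1)$, $(TRM4_2)$ and $(TRM5)$, while $(TRM0_2)$ alone covers the ordinary-crossing--through--vertical-tangency event. This does not affect the logic of your argument, only the grouping.
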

Note that this fact had been mentioned already in \cite{EFM} for nonsingular transverse links $\T$.
%
%
%
Moreover, since regular diagrams in $\ST$ can be regarded as regular diagrams in $\SK$, we can define resolutions on $\ST$ in the same manner as defined on $\SK$.
\begin{defn}[Resolutions on $\ST$]
Let $T\in\ST$ and $p$ be a singular point of $T$. Then for each $\eta\in\{+,-,0\}$, we define the {\em $\eta$-resolution} $\mathcal{R}_\eta(T,p)$ at $p$ by the diagram replacement as follows.
\begin{align*}
\mathcal{R}_+\left(\vcenter{\hbox{\includegraphics[scale=0.7]{K_singularcrossing_RR.pdf}}},p\right)&:=
\vcenter{\hbox{\includegraphics[scale=0.7]{K_singularcrossing_RR_+.pdf}}}&
\mathcal{R}_-\left(\vcenter{\hbox{\includegraphics[scale=0.7]{K_singularcrossing_RR.pdf}}},p\right)&:=
\vcenter{\hbox{\includegraphics[scale=0.7]{K_singularcrossing_RR_-.pdf}}}&
\mathcal{R}_0\left(\vcenter{\hbox{\includegraphics[scale=0.7]{K_singularcrossing_RR.pdf}}},p\right)&:=
\vcenter{\hbox{\includegraphics[scale=0.7]{K_singularcrossing_RR_0.pdf}}}
\end{align*}
\end{defn}

\subsubsection{Positive push-offs of singular Legendrian links}
We define a map $(\cdot)^+:\SL\to\ST$ by a {\em positive push-off}, which is a small perturbation that makes a given Legendrian link {\em positive}. 
Diagrammatically, it is defined as depicted in Figure~\ref{fig:pushoff}.
One can prove the well-definedness of $(\cdot)^+$ by showing that all Legendrian Reidemeister moves $(LRM1)\sim(LRM6)$ define equivalences in $\ST$. Note that this depends on the orientation of a given link.
See \cite[\S2.9]{E} for details of nonsingular cases.

\begin{figure}[ht]
\[
\xymatrix@C=0.5pc@R=1pc{
\SL\ar[d]_{(\cdot)^+} & &
\vcenter{\hbox{\includegraphics{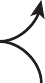}}}\ar@{|->}[d] &
\vcenter{\hbox{\includegraphics{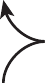}}}\ar@{|->}[d] &
\vcenter{\hbox{\includegraphics{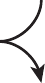}}}\ar@{|->}[d] &
\vcenter{\hbox{\includegraphics{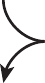}}}\ar@{|->}[d] &
\vcenter{\hbox{\includegraphics{K_crossing.pdf}}}\ar@{|->}[d] 
&\vcenter{\hbox{\includegraphics[scale=0.9]{L_singularcrossing_RL.pdf}}}\ar@{|->}[d] &
\vcenter{\hbox{\includegraphics[scale=0.9]{L_singularcrossing_RR.pdf}}}\ar@{|->}[d] &
\vcenter{\hbox{\includegraphics[scale=0.9]{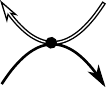}}}\ar@{|->}[d] &
\vcenter{\hbox{\includegraphics[scale=0.9]{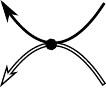}}}\ar@{|->}[d]
\\
\ST & &
\vcenter{\hbox{\includegraphics{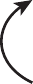}}}&
\vcenter{\hbox{\includegraphics{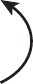}}}&
\vcenter{\hbox{\includegraphics{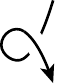}}} &
\vcenter{\hbox{\includegraphics{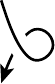}}} &
\vcenter{\hbox{\includegraphics{K_crossing.pdf}}} 
&
\vcenter{\hbox{\includegraphics[scale=0.9]{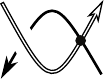}}} &
\vcenter{\hbox{\includegraphics[scale=0.9]{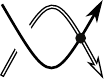}}} &
\vcenter{\hbox{\includegraphics[scale=0.9]{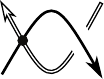}}} &
\vcenter{\hbox{\includegraphics[scale=0.9]{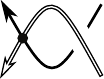}}}
}
\]
\caption{Positive push-off's of Legendrian front diagrams}
\label{fig:pushoff}
\end{figure}

\begin{thm}\label{thm:SLST}
The map $(\cdot)^+$ induces bijections 
\[
(\cdot)^+:\L/\{(LS_-)\} \to\T,\qquad (\cdot)^+:\SL/\{(LS_-), (LF)\} \to\ST.
\]
%
\end{thm}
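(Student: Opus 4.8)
### Proof proposal for Theorem~\ref{thm:SLST}

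The plan is to imitate the proof of the non‑singular statement $(\cdot)^+:\L/\{(LS_-)\}\to\T$ (see \cite[\S2.9]{E}) and upgrade it to the singular setting by controlling the singular points via the tangle machinery developed in Section~\ref{sec:singular links}. Concretely, I would prove three things: (i) $(\cdot)^+$ is well defined on $\SL$ and descends to $\SL/\{(LS_-),(LF)\}$; (ii) it is surjective onto $\ST$; (iii) it is injective on $\SL/\{(LS_-),(LF)\}$.

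First, for well‑definedness I would check that each Legendrian Reidemeister move $(LRM1)$–$(LRM6)$, after applying the local replacement rules of Figure~\ref{fig:pushoff}, becomes a finite sequence of the transverse moves $(TRM0_*)$–$(TRM5)$ of Figure~\ref{fig:transverseRM}, together with planar isotopy; here one must be careful that no forbidden front projection of Figure~\ref{fig:forbiddentransversalfront} is ever created, which is exactly what forces the orientation‑dependence of the construction. The singular moves $(LRM6)$ (and the composite $(LRM4)$) are new compared to the non‑singular case, so these require the extra columns of Figure~\ref{fig:pushoff} giving the push‑offs of the four oriented singular crossings $L_\bullet^N,L_\bullet^E,L_\bullet^W,L_\bullet^S$; I would verify by direct local inspection that pushing off both sides of $(LRM6)$ produces transverse‑isotopic diagrams. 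Next, the identity $\|\cdot\|=\|\cdot\|\circ(\text{forget transverse structure})$ together with Proposition~\ref{prop:SLSK} shows that $(LS_+)$ and $(LF)$ must act trivially after push‑off up to the moves in $\ST$; for $(LF)$ this is a local computation, and for the $(+)$‑zigzag one checks directly that its positive push‑off is removable by $(TRM0_1)$ (a transverse $R1$‑type move), whereas the $(-)$‑zigzag is \emph{not} removable — this is precisely why we quotient only by $(LS_-)$ and $(LF)$, not by $(LS_+)$. Hence $(\cdot)^+$ factors through $\SL/\{(LS_-),(LF)\}\to\ST$.

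For surjectivity, given $T\in\ST$ I would take a regular front projection $\pi_F(T)$ (regular in the sense of $\SK$, avoiding the forbidden pictures of Figure~\ref{fig:forbiddentransversalfront}), and produce a Legendrian link $L$ whose push‑off is $T$ by the standard "comb" or "staircase" construction: replace each non‑singular crossing and each arc of $\pi_F(T)$ by Legendrian pieces, inserting downward zigzags ($(-)$‑zigzags, which disappear under $(\cdot)^+$ by the previous paragraph) wherever the front would otherwise acquire a vertical tangency; at each singular point one uses the matching entry of Figure~\ref{fig:pushoff} read backwards, noting that each of the four oriented transverse singular crossings in that figure is the push‑off of exactly one of $L_\bullet^N,L_\bullet^E,L_\bullet^W,L_\bullet^S$. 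This gives $L\in\SL$ with $L^+=T$ in $\ST$.

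The main obstacle — and the technical heart of the argument — is injectivity: given $L_1,L_2\in\SL$ with $L_1^+\simeq L_2^+$ in $\ST$, I must show $L_1$ and $L_2$ differ by $(LS_-)^{\pm1}$ and $(LF)$. The strategy is: (1) by Proposition~\ref{prop:SLSK}, $\|L_1\|=\|L_2\|$ in $\SK$, so $L_1$ and $L_2$ already differ by a sequence of $(LS_\pm)^{\pm1}$ and $(LF)$; hence it suffices to show that any \emph{positive} stabilization $(LS_+)$ can be traded, after applying $(\cdot)^+$, for negative (de)stabilizations and flypes on the Legendrian side — equivalently that $L$ and its $(LS_+)$‑stabilization have Legendrian representatives of their common transverse push‑off differing only by $(LS_-)$ and $(LF)$. (2) Run the transverse isotopy $L_1^+\simeq L_2^+$ move‑by‑move through Figure~\ref{fig:transverseRM}, lifting each transverse move to a sequence of Legendrian moves modulo $(LS_-)$ and $(LF)$; the transverse stabilization move $(TRM0_1)$ lifts to inserting a $(-)$‑zigzag, and the genuinely singular transverse moves $(TRM4_*),(TRM5)$ are lifted using Lemma~\ref{lem:tanglerep} and Corollary~\ref{cor:complement}: localize the singular point inside a tangle, apply the non‑singular lifting to the complementary tangle, and reassemble. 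The delicate point throughout is bookkeeping the zigzags — making sure every zigzag created in a lift is a \emph{negative} one (so it is killed in the quotient), which again comes down to the orientation constraints encoded in Figures~\ref{fig:pushoff} and \ref{fig:forbiddentransversalfront}. I expect the singular moves and this zigzag‑sign bookkeeping to be where essentially all the work lies; the rest is a routine adaptation of Etnyre's non‑singular argument.
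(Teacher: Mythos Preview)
Your well-definedness paragraph has the signs of $(LS_\pm)$ reversed. In this paper's conventions, $(LS_+)$ adds two \emph{down} cusps (see the discussion after the definition of $(TS)$), whose push-offs are the irremovable kinks; it is $(LS_-)$ whose push-off is removable by a transverse isotopy. So the sentence ``for the $(+)$-zigzag one checks directly that its positive push-off is removable \ldots\ whereas the $(-)$-zigzag is not removable'' is backwards, and as written it is also internally inconsistent with your conclusion ``this is precisely why we quotient only by $(LS_-)$.'' The correct statement, as in the paper's proof, is simply that $(LS_-)$ and $(LF)$ avoid all forbidden fronts of Figure~\ref{fig:forbiddentransversalfront}, hence descend to equivalences in $\ST$.

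Your injectivity strategy (1) does not work. From $L_1^+\simeq L_2^+$ you deduce $\|L_1\|=\|L_2\|$ in $\SK$, hence by Proposition~\ref{prop:SLSK} that $L_1,L_2$ differ by $(LS_\pm)^{\pm1}$ and $(LF)$; but you then need to eliminate the $(LS_+)$'s from that sequence, and there is no mechanism for this: $L$ and $(LS_+)(L)$ do \emph{not} have the same transverse push-off (indeed $(LS_+)^+=(TS)$), so the phrase ``their common transverse push-off'' is vacuous. The hypothesis $L_1^+\simeq L_2^+$ is genuinely finer than $\|L_1\|=\|L_2\|$ and cannot be recovered from it. Your strategy (2) is the right one and is exactly what the paper does: lift each transverse Reidemeister move to Legendrian moves modulo $(LS_-)$ and $(LF)$. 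But the paper carries this out by direct local inspection --- $(TRM4_*)$ lifts to $(LRM5)$, $(TRM5)$ lifts to $(LF)$, and $(TRM0_3)$ lifts to $(LRM4)$, each up to $(LS_-)$ --- with no appeal to the tangle machinery of Lemma~\ref{lem:tanglerep} or Corollary~\ref{cor:complement}. Those are global tools, whereas the lifts needed here are purely local diagram replacements; invoking tangle decompositions would not be wrong, but it obscures a three-line check.
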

For nonsingular Legendrian and transverse links, this is a well-known fact. See \cite[Theorem~2.1]{EFM}.
\begin{proof}
Since neither $(LS_-)$ nor $(LF)$ contain any forbidden front projection, they induce equivalences in $\ST$ via the pushoff $(\cdot)^+$. Therefore the maps $(\cdot)^+$ on the quotient spaces $\L/\{(LS_-)\}$ and $\SL/\{(LS_-),(LF)\}$ are well-defined.

For any $T\in\ST$, one can find $L$ such that $L^+=T$ by taking inverses for upward vertical tangencies and crossings in $\pi_F(T)$, and hence $(\cdot)^+$ is surjective.

Finally, we claim that a planar isotopy and (singular) Reidemeister moves without forbidden front projections can be realized by a sequence of (singular) Legendrian Reidemeister moves, $(LS_-)$ and $(LF)$. 
For the moves which do not involve any singular point, the claim is proved by \cite[Theorem~2.1]{EFM}. Therefore it suffices to consider only $(TRM0_3)$, $(TRM4_1)$, $(TRM4_2)$ and $(TRM5)$.

It is not hard to see that the move $(TRM4_*)$ and $(TRM5)$ are essentially the same as $(LRM5)$ and $(LF)$, respectively, via the pushoff $(\cdot)^+$ up to $(LS_-)$.
Finally, we show that the move $(TRM0_3)$ is related with $(LRM4)$ as shown in Figure~\ref{fig:TRM0_3} and therefore we prove the claim.
\end{proof}

\begin{figure}[ht]
\[
\xymatrix@C=3pc@R=1pc{
\vcenter{\hbox{\includegraphics{TRM0_3_2.pdf}}}\ar@{<-|}[r]^-{(\cdot)^+}\ar@{<->}[d]_{(TRM0_3)} &
\vcenter{\hbox{\includegraphics{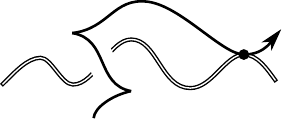}}} \ar[r]^-{/(LS_-)} & \vcenter{\hbox{\includegraphics{LRM4_1.pdf}}}
\ar@{<->}[d]^{(LRM4)}\\
\vcenter{\hbox{\includegraphics{TRM0_3_1.pdf}}}\ar@{<-|}[r]^-{(\cdot)^+} &
\vcenter{\hbox{\includegraphics{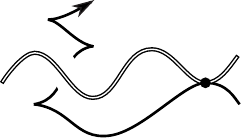}}} \ar[r]^-{/(LS_-)} & \vcenter{\hbox{\includegraphics{LRM4_2.pdf}}}
}
\]
\caption{Transverse Reidemeister move $(TRM0_3)$ and Legendrian Reidemeister move $(LRM4)$}
\label{fig:TRM0_3}
\end{figure}

\subsubsection{Transverse closures of singular braids}
Now we consider a map $\hat{(\cdot)}_\T:\SB\to\ST$, which is a generalization of the transverse closure $\hat{(\cdot)}_\T:\B\to\T$.
For $\beta\in\SB$, a transverse closure $\hat{\beta}_\T$ of $\beta$ is defined by the ordinary closure together with a $(-)$-kink at each strands on the right as follows.
\[
\xymatrix{
\vcenter{\hbox{}}\in\SB\ar@{|->}[r]^-{\hat{(\cdot)}_\T}&
\vcenter{\hbox{\def\svgscale{0.8}
\begingroup%
  \makeatletter%
  \providecommand\color[2][]{%
    \errmessage{(Inkscape) Color is used for the text in Inkscape, but the package 'color.sty' is not loaded}%
    \renewcommand\color[2][]{}%
  }%
  \providecommand\transparent[1]{%
    \errmessage{(Inkscape) Transparency is used (non-zero) for the text in Inkscape, but the package 'transparent.sty' is not loaded}%
    \renewcommand\transparent[1]{}%
  }%
  \providecommand\rotatebox[2]{#2}%
  \ifx\svgwidth\undefined%
    \setlength{\unitlength}{116.19100289bp}%
    \ifx\svgscale\undefined%
      \relax%
    \else%
      \setlength{\unitlength}{\unitlength * \real{\svgscale}}%
    \fi%
  \else%
    \setlength{\unitlength}{\svgwidth}%
  \fi%
  \global\let\svgwidth\undefined%
  \global\let\svgscale\undefined%
  \makeatother%
  \begin{picture}(1,0.5065711)%
    \put(0,0){\includegraphics[width=\unitlength,page=1]{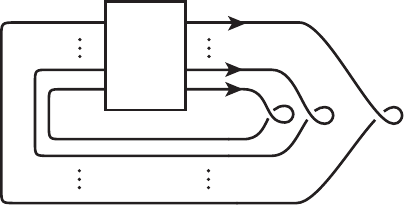}}%
    \put(0.32604906,0.35457836){\color[rgb]{0,0,0}\makebox(0,0)[lb]{\smash{$\beta$}}}%
  \end{picture}%
\endgroup%
}}=
\vcenter{\hbox{\def\svgscale{0.8}
\begingroup%
  \makeatletter%
  \providecommand\color[2][]{%
    \errmessage{(Inkscape) Color is used for the text in Inkscape, but the package 'color.sty' is not loaded}%
    \renewcommand\color[2][]{}%
  }%
  \providecommand\transparent[1]{%
    \errmessage{(Inkscape) Transparency is used (non-zero) for the text in Inkscape, but the package 'transparent.sty' is not loaded}%
    \renewcommand\transparent[1]{}%
  }%
  \providecommand\rotatebox[2]{#2}%
  \ifx\svgwidth\undefined%
    \setlength{\unitlength}{86.4000001bp}%
    \ifx\svgscale\undefined%
      \relax%
    \else%
      \setlength{\unitlength}{\unitlength * \real{\svgscale}}%
    \fi%
  \else%
    \setlength{\unitlength}{\svgwidth}%
  \fi%
  \global\let\svgwidth\undefined%
  \global\let\svgscale\undefined%
  \makeatother%
  \begin{picture}(1,0.62253458)%
    \put(0,0){\includegraphics[width=\unitlength,page=1]{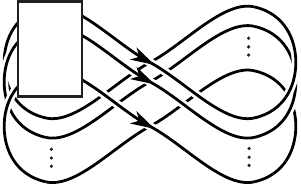}}%
    \put(0.13315604,0.43342761){\color[rgb]{0,0,0}\makebox(0,0)[lb]{\smash{$\beta$}}}%
  \end{picture}%
\endgroup%
}}\in\ST
}
\]
Refer to \cite[\S2.4]{KN} for details of the nonsingular case. Then $\hat{(\cdot)}_\T$ is well-defined since the defining relators for $\mathbf{SB}$ obviously correspond to the singular transverse Reidemeister moves and moreover conjugation $(BrC)$ and exchange move $(BrE)$ induce equivalences in $\ST$ as follows.
\begin{align*}
\hat{(\beta_1\beta_2)}_\T&=\vcenter{\hbox{\scriptsize\def\svgscale{0.8}
\begingroup%
  \makeatletter%
  \providecommand\color[2][]{%
    \errmessage{(Inkscape) Color is used for the text in Inkscape, but the package 'color.sty' is not loaded}%
    \renewcommand\color[2][]{}%
  }%
  \providecommand\transparent[1]{%
    \errmessage{(Inkscape) Transparency is used (non-zero) for the text in Inkscape, but the package 'transparent.sty' is not loaded}%
    \renewcommand\transparent[1]{}%
  }%
  \providecommand\rotatebox[2]{#2}%
  \ifx\svgwidth\undefined%
    \setlength{\unitlength}{86.4000001bp}%
    \ifx\svgscale\undefined%
      \relax%
    \else%
      \setlength{\unitlength}{\unitlength * \real{\svgscale}}%
    \fi%
  \else%
    \setlength{\unitlength}{\svgwidth}%
  \fi%
  \global\let\svgwidth\undefined%
  \global\let\svgscale\undefined%
  \makeatother%
  \begin{picture}(1,0.62253336)%
    \put(0,0){\includegraphics[width=\unitlength,page=1]{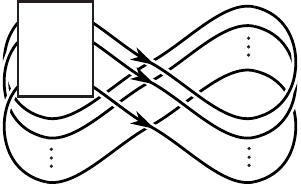}}%
    \put(0.06944444,0.45152638){\color[rgb]{0,0,0}\makebox(0,0)[lb]{\smash{$\beta_1\beta_2$}}}%
  \end{picture}%
\endgroup%
}}
=\vcenter{\hbox{\scriptsize\def\svgscale{0.8}
\begingroup%
  \makeatletter%
  \providecommand\color[2][]{%
    \errmessage{(Inkscape) Color is used for the text in Inkscape, but the package 'color.sty' is not loaded}%
    \renewcommand\color[2][]{}%
  }%
  \providecommand\transparent[1]{%
    \errmessage{(Inkscape) Transparency is used (non-zero) for the text in Inkscape, but the package 'transparent.sty' is not loaded}%
    \renewcommand\transparent[1]{}%
  }%
  \providecommand\rotatebox[2]{#2}%
  \ifx\svgwidth\undefined%
    \setlength{\unitlength}{86.4000001bp}%
    \ifx\svgscale\undefined%
      \relax%
    \else%
      \setlength{\unitlength}{\unitlength * \real{\svgscale}}%
    \fi%
  \else%
    \setlength{\unitlength}{\svgwidth}%
  \fi%
  \global\let\svgwidth\undefined%
  \global\let\svgscale\undefined%
  \makeatother%
  \begin{picture}(1,0.62939053)%
    \put(0,0){\includegraphics[width=\unitlength,page=1]{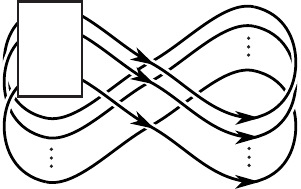}}%
    \put(0.09964077,0.439865){\color[rgb]{0,0,0}\makebox(0,0)[lb]{\smash{$\beta_1$}}}%
    \put(0.65277774,0.57875389){\color[rgb]{0,0,0}\makebox(0,0)[lb]{\smash{}}}%
    \put(0,0){\includegraphics[width=\unitlength,page=2]{T_closure_conjugate_2.pdf}}%
    \put(0.77035007,0.13430947){\color[rgb]{0,0,0}\makebox(0,0)[lb]{\smash{$\beta_2$}}}%
  \end{picture}%
\endgroup%
}}
=\vcenter{\hbox{\scriptsize\def\svgscale{0.8}
\begingroup%
  \makeatletter%
  \providecommand\color[2][]{%
    \errmessage{(Inkscape) Color is used for the text in Inkscape, but the package 'color.sty' is not loaded}%
    \renewcommand\color[2][]{}%
  }%
  \providecommand\transparent[1]{%
    \errmessage{(Inkscape) Transparency is used (non-zero) for the text in Inkscape, but the package 'transparent.sty' is not loaded}%
    \renewcommand\transparent[1]{}%
  }%
  \providecommand\rotatebox[2]{#2}%
  \ifx\svgwidth\undefined%
    \setlength{\unitlength}{86.4000001bp}%
    \ifx\svgscale\undefined%
      \relax%
    \else%
      \setlength{\unitlength}{\unitlength * \real{\svgscale}}%
    \fi%
  \else%
    \setlength{\unitlength}{\svgwidth}%
  \fi%
  \global\let\svgwidth\undefined%
  \global\let\svgscale\undefined%
  \makeatother%
  \begin{picture}(1,0.62253336)%
    \put(0,0){\includegraphics[width=\unitlength,page=1]{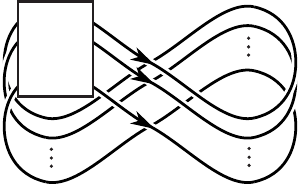}}%
    \put(0.06944444,0.45152638){\color[rgb]{0,0,0}\makebox(0,0)[lb]{\smash{$\beta_2\beta_1$}}}%
  \end{picture}%
\endgroup%
}}=\hat{(\beta_2\beta_1)}_\T,\\
\hat{(\beta_1\sigma_n\beta_2\sigma_n^{-1})}_\T
&=\vcenter{\hbox{\scriptsize\def\svgscale{0.8}
\begingroup%
  \makeatletter%
  \providecommand\color[2][]{%
    \errmessage{(Inkscape) Color is used for the text in Inkscape, but the package 'color.sty' is not loaded}%
    \renewcommand\color[2][]{}%
  }%
  \providecommand\transparent[1]{%
    \errmessage{(Inkscape) Transparency is used (non-zero) for the text in Inkscape, but the package 'transparent.sty' is not loaded}%
    \renewcommand\transparent[1]{}%
  }%
  \providecommand\rotatebox[2]{#2}%
  \ifx\svgwidth\undefined%
    \setlength{\unitlength}{111.49137846bp}%
    \ifx\svgscale\undefined%
      \relax%
    \else%
      \setlength{\unitlength}{\unitlength * \real{\svgscale}}%
    \fi%
  \else%
    \setlength{\unitlength}{\svgwidth}%
  \fi%
  \global\let\svgwidth\undefined%
  \global\let\svgscale\undefined%
  \makeatother%
  \begin{picture}(1,0.50902965)%
    \put(0,0){\includegraphics[width=\unitlength,page=1]{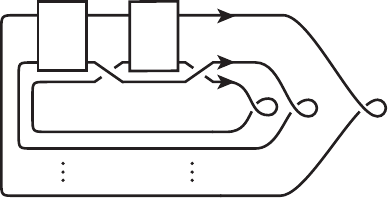}}%
    \put(0.11271659,0.38366199){\color[rgb]{0,0,0}\makebox(0,0)[lb]{\smash{$\beta_1$}}}%
    \put(0.34950619,0.38366199){\color[rgb]{0,0,0}\makebox(0,0)[lb]{\smash{$\beta_2$}}}%
  \end{picture}%
\endgroup%
}}
=\vcenter{\hbox{\scriptsize\def\svgscale{0.8}
\begingroup%
  \makeatletter%
  \providecommand\color[2][]{%
    \errmessage{(Inkscape) Color is used for the text in Inkscape, but the package 'color.sty' is not loaded}%
    \renewcommand\color[2][]{}%
  }%
  \providecommand\transparent[1]{%
    \errmessage{(Inkscape) Transparency is used (non-zero) for the text in Inkscape, but the package 'transparent.sty' is not loaded}%
    \renewcommand\transparent[1]{}%
  }%
  \providecommand\rotatebox[2]{#2}%
  \ifx\svgwidth\undefined%
    \setlength{\unitlength}{111.49137846bp}%
    \ifx\svgscale\undefined%
      \relax%
    \else%
      \setlength{\unitlength}{\unitlength * \real{\svgscale}}%
    \fi%
  \else%
    \setlength{\unitlength}{\svgwidth}%
  \fi%
  \global\let\svgwidth\undefined%
  \global\let\svgscale\undefined%
  \makeatother%
  \begin{picture}(1,0.50902965)%
    \put(0,0){\includegraphics[width=\unitlength,page=1]{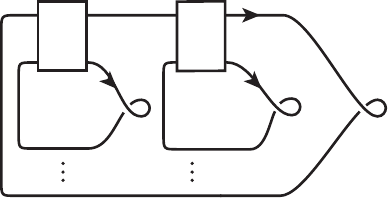}}%
    \put(0.11271659,0.38366199){\color[rgb]{0,0,0}\makebox(0,0)[lb]{\smash{$\beta_1$}}}%
    \put(0.47148874,0.38366199){\color[rgb]{0,0,0}\makebox(0,0)[lb]{\smash{$\beta_2$}}}%
  \end{picture}%
\endgroup%
}}\\
&=\vcenter{\hbox{\scriptsize\def\svgscale{0.8}
\begingroup%
  \makeatletter%
  \providecommand\color[2][]{%
    \errmessage{(Inkscape) Color is used for the text in Inkscape, but the package 'color.sty' is not loaded}%
    \renewcommand\color[2][]{}%
  }%
  \providecommand\transparent[1]{%
    \errmessage{(Inkscape) Transparency is used (non-zero) for the text in Inkscape, but the package 'transparent.sty' is not loaded}%
    \renewcommand\transparent[1]{}%
  }%
  \providecommand\rotatebox[2]{#2}%
  \ifx\svgwidth\undefined%
    \setlength{\unitlength}{111.49137846bp}%
    \ifx\svgscale\undefined%
      \relax%
    \else%
      \setlength{\unitlength}{\unitlength * \real{\svgscale}}%
    \fi%
  \else%
    \setlength{\unitlength}{\svgwidth}%
  \fi%
  \global\let\svgwidth\undefined%
  \global\let\svgscale\undefined%
  \makeatother%
  \begin{picture}(1,0.50902965)%
    \put(0,0){\includegraphics[width=\unitlength,page=1]{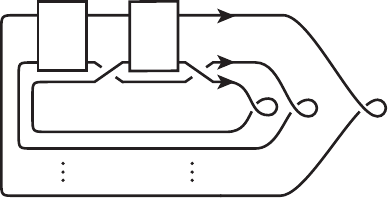}}%
    \put(0.11271659,0.38366199){\color[rgb]{0,0,0}\makebox(0,0)[lb]{\smash{$\beta_1$}}}%
    \put(0.34950619,0.38366199){\color[rgb]{0,0,0}\makebox(0,0)[lb]{\smash{$\beta_2$}}}%
  \end{picture}%
\endgroup%
}}
=\hat{(\beta_1\sigma_n^{-1}\beta_2\sigma_n)}_\T.
\end{align*}

Indeed, any singular transverse link $T\in\ST$ can be represented by a transverse closure $\hat\beta_\T$ of a singular braid $\beta=\beta(T)\in\SB$ as follows.
For given $T\in\ST$, we may assume that its front projection $D=\pi_F(T)$ is regular. Then we perform the following planar isotopies on $D$ and denote the resulting diagram by $D'$.
\[
\xymatrix@C=1pc@R=2pc{
D\ar[d]& & \vcenter{\hbox{\includegraphics{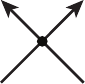}}}\ar@{|->}[d] & \vcenter{\hbox{\includegraphics{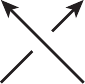}}}\ar@{|->}[d] & 
\vcenter{\hbox{\includegraphics{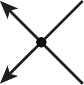}}} \ar@{|->}[d] &
\vcenter{\hbox{\includegraphics{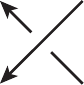}}}\ar@{|->}[d] &
\vcenter{\hbox{\includegraphics{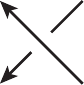}}}\ar@{|->}[d]\\
D' & & \vcenter{\hbox{\includegraphics{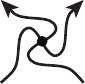}}} & 
\vcenter{\hbox{\includegraphics{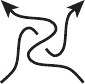}}} & 
\vcenter{\hbox{\includegraphics{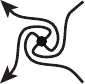}}} &
\vcenter{\hbox{\includegraphics{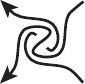}}} &
\vcenter{\hbox{\includegraphics{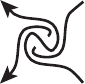}}}
}
\]
Now we split $D'$ by cutting all vertical tangencies, and we call arcs from the right to the left {\em backward arcs}.
Notice that each backward arc looks like $\vcenter{\hbox{\includegraphics{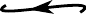}}}$ since there are no upward vertical tangencies. Moreover, after performing the above isotopies, every backward arc is only involving nonsingular crossings and lying {\em below} to all the other arcs.
Especially, all backward arcs are disjoint as shown in Figure~\ref{fig:backwardarcs}, and we can pull both ends of backward arcs to the left and right, respectively, by keeping backward arcs below and disjoint so that their ends are lying sufficiently far from their original positions. See Figure~\ref{fig:backwardarcs_2}.
Now by pulling down all backward arcs in $\mathcal{A}$, we can obtain a transversely closed braid $\hat\beta_\T$ for some $\beta=\beta(D')\in\mathbf{SB}$. See Figure~\ref{fig:backwardarcs_3}.

Remark that $\beta$ is not unique since it depends on the way how to pull end points of backward arcs.
More precisely, if we pull backward arcs to two ways, then the resulting braids $\beta_1$ and $\beta_2$ are different as much as conjugation. However, $\beta(D')$ as an element in $\SB$ is well-defined since $\SB = \mathbf{SB}/\{(BrC), (BrE)\}$.

\begin{figure}[ht]
\subfigure[
\label{fig:backwardarcs}]{
$D'=\vcenter{\hbox{\includegraphics{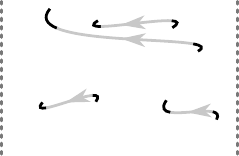}}}$
}{\ \ $=\joinrel=$}
\subfigure[
\label{fig:backwardarcs_2}]{
$\vcenter{\hbox{\includegraphics{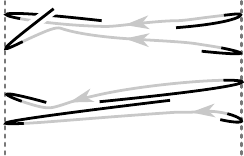}}}$
}{\ \ $=\joinrel=$}
\subfigure[
\label{fig:backwardarcs_3}]{
$\vcenter{\hbox{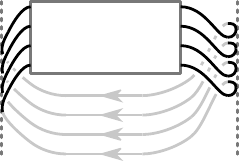}}=\hat\beta_\T$
}
\caption{Backward arcs and a closed braid presentation}
\end{figure}

\begin{thm}\label{thm:SBtoST}
The map $\hat{(\cdot)}_\T$ induces bijections
\[
\hat{(\cdot)}_\T:\B/\{(Br S_+)\}\to \T,\qquad
\hat{(\cdot)}_\T:\SB/\{(BrS_+)\}\to \ST.
\]
%
\end{thm}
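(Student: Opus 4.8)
The plan is to check that $\widehat{(\cdot)}_\T$ is well-defined, surjective and injective, deducing the singular statement from the nonsingular transverse Markov theorem ($\B/\{(BrS_+)\}\xrightarrow{\sim}\T$, classical, see \cite[Theorem~2.1]{EFM}) together with Theorem~\ref{thm:SBtoSK}. Well-definedness on $\SB=\mathbf{SB}/\{(BrC),(BrE)\}$ has already been checked above: the defining relators of $\mathbf{SB}$ become singular transverse Reidemeister moves, and $(BrC)$ and $(BrE)$ induce equivalences in $\ST$. Since $(BrE)$ is generated by $(BrC)$ and $(BrS_+)^{\pm1}$ we have $\SB/\{(BrS_+)\}=\mathbf{SB}/\{(BrC),(BrS_+)\}$, so it remains only to see that a positive stabilization $\beta\leftrightarrow\beta\sigma_n$ induces an equivalence in $\ST$: the adjoined strand carries the single crossing $\sigma_n$ and is unlinked from the singular generators of $\beta$, so it may be absorbed by moves of Figure~\ref{fig:transverseRM} exactly as in the nonsingular case, whereas the negative stabilization $\beta\leftrightarrow\beta\sigma_n^{-1}$ changes the self-linking number and is therefore not permitted. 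Surjectivity has been established in the discussion preceding the theorem, where a regular front of $T\in\ST$ is braided up by the backward-arc construction of Figures~\ref{fig:backwardarcs}--\ref{fig:backwardarcs_3} to produce $T=\widehat{\beta(D')}_\T$.

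Thus the content is injectivity: given $\alpha,\beta\in\mathbf{SB}$ with $\widehat\alpha_\T=\widehat\beta_\T$ in $\ST$, I want $[\alpha]=[\beta]$ in $\SB/\{(BrS_+)\}$. First I would push down to $\SK$: the forgetful map $\ST\to\SK$ sends $\widehat\gamma_\T$ to the ordinary closure $\widehat\gamma$, the $(-)$-kinks being topologically trivial, so $\widehat\alpha=\widehat\beta$ in $\SK$ and Theorem~\ref{thm:SBtoSK} gives that $\alpha$ and $\beta$ differ by Markov moves; modulo $(BrC)$ and $(BrS_+)^{\pm1}$ this means $[\alpha]$ and $[\beta]$ in $\SB/\{(BrS_+)\}$ are joined by a zigzag of negative (de)stabilizations $(BrS_-)^{\pm1}$. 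It then suffices to show this zigzag can be eliminated, and I would argue by induction on the number of singular points (which $\alpha$ and $\beta$ share, being matched by the equivalence $\widehat\alpha_\T\cong\widehat\beta_\T$ of singular transverse links). The base case is precisely the nonsingular transverse Markov theorem. For the inductive step, I would fix a singular point $p$ of $\widehat\alpha_\T$ and the corresponding one of $\widehat\beta_\T$; since $\eta$-resolutions are well-defined on $\ST$ and commute with $\widehat{(\cdot)}_\T$, one gets $\widehat{\mathcal{R}_\eta(\alpha,p)}_\T=\widehat{\mathcal{R}_\eta(\beta,p)}_\T$ in $\ST$ for every $\eta\in\{+,-,0\}$, hence $\mathcal{R}_\eta(\alpha,p)=\mathcal{R}_\eta(\beta,p)$ in $\SB/\{(BrS_+)\}$ for every $\eta$ by the inductive hypothesis; and one recovers $[\alpha]=[\beta]$ from these equalities together with the local data of how a neighbourhood of $p$ is carried along by the isotopy realizing $\widehat\alpha_\T\cong\widehat\beta_\T$ --- after normalizing that isotopy to be standard near $p$, its restriction to the complement is a transverse tangle equivalence rel boundary, through which one can track the slot where the singular generator at $p$ is re-inserted.

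The main obstacle is this last tracking step: arranging that a sequence of conjugations, commutations and positive (de)stabilizations relating the resolved braids is compatible with the re-insertion of the singular generator at $p$ --- equivalently, that a singular class in $\SB/\{(BrS_+)\}$ is pinned down by the transverse types of its resolutions at a chosen singular point together with the ambient normalization there. The bookkeeping throughout is supplied by the self-linking number on $\ST$ (transported from the Thurston--Bennequin and rotation numbers on $\SL$ via Theorem~\ref{thm:SLST}), which drops by $2$ under a negative braid destabilization and so prevents a net nontrivial negative (de)stabilization from surviving. An essentially equivalent alternative, paralleling the proof of Theorem~\ref{thm:SLST}, is to braid up the transverse Reidemeister moves realizing $\widehat\alpha_\T\cong\widehat\beta_\T$ directly, translating the new singular moves $(TRM4_*)$ and $(TRM5)$ into the singular braid relations $(Br2)$ and $(BrF)$ and handling the remaining moves by the classical braiding argument; this repackages the same difficulty, namely that the braiding can be arranged to use positive stabilizations only even across singular points.
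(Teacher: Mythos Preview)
Your well-definedness and surjectivity match the paper, but your injectivity argument has a genuine gap that you yourself flag: the ``tracking step'' of re-inserting the singular generator after applying the inductive hypothesis to the resolutions is never carried out. Knowing that $\mathcal{R}_\eta(\alpha,p)=\mathcal{R}_\eta(\beta,p)$ in $\SB/\{(BrS_+)\}$ for every $\eta$ does not by itself yield $[\alpha]=[\beta]$: the sequences of conjugations and positive (de)stabilizations relating the resolved braids need not be compatible with any fixed slot for $p$, and you give no mechanism to force compatibility. The self-linking bookkeeping you invoke only rules out a net negative stabilization; it does not control where the moves occur relative to $p$. So as written the induction does not close.

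The paper avoids this difficulty entirely by taking the route you mention at the end and then dismiss as ``the same difficulty'': it constructs an explicit candidate inverse $\ST\to\SB/\{(BrS_+)\}$ via the backward-arc braiding $D\mapsto\beta(D')$ and then checks, move by move, that each transverse Reidemeister move $(TRM0_*)$, $(TRM2_*)$, $(TRM3)$, $(TRM4_*)$, $(TRM5)$ changes $\beta$ only by conjugation, exchange, and $(BrS_+)^{\pm1}$. The singular moves $(TRM4_*)$ and $(TRM5)$ are handled directly (they correspond to $(Br2)$ and $(BrF)$), and no resolution or induction is needed. This is not the same difficulty repackaged: the verification is local and finite, whereas your re-insertion step is a global compatibility claim that would require substantial further argument.
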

Remark that nonsingular cases are already covered by \cite{Ben,OS}\label{thm:SBST}.
\begin{proof}
It is easy to see that $(Br S_+)$ induces an equivalence in $\ST$, and moreover, $\hat{(\cdot)}_\T$ is surjective since for any $T\in\ST$ and its regular front projection $D=\pi_F(T)$, one can construct $\beta\in\mathbf{SB}$ such that $\hat{\beta}_\T = T\in\ST$ as above.

Notice that the map $\ST\to\SB/\{(BrS_+)\}$ defined by $T\mapsto\beta$ is a candidate for the inverse of $\hat{(\cdot)}_\T$. Therefore it suffices to prove the well-definedness of $\beta$ in $\SB$.

Suppose that two regular front projections $D_1$ and $D_2$ for singular transverse links $T_1$ and $T_2$ are different as much as one transverse Reidemeister move. Then we need to prove that corresponding braids $\beta_1$ and $\beta_2$ are the same up to $(+)$-stabilization $(BrS_+)$ in $\SB$.
Recall that the construction of $\beta_i$ depends on where vertical tangencies lie.

As seen in Figure~\ref{fig:transverseRM}, the move $(TRM0_1)$ increases or decreases two vertical tangencies, or equivalently, one backward arc. 
Hence it is easy to check that $(TRM0_1)$ effects on the braid $\beta$ as the positive (de)stabilization $(BrS_+)^{\pm1}$ up to conjugacy.
On the other hand, suppose $D_1$ and $D_2$ differ by $(TRM0_2)$ or $(TRM0_3)$. Then two diagrams $D'_1$ and $D'_2$ are the same or differ by $(TRM0_1)$. Therefore the map $D\mapsto\beta\in\mathcal{SB}$ is well-defined under the planar isotopy.

For the other transverse Reidemeister moves, there are many cases according to the orientations of strands, but it is a routine process to check the well-definedness of $D\mapsto\beta$. 
For example, whatever the orientation on $(TRM2_1)$ is given, there are no backward overcrossings. Hence $D\mapsto\beta$ is well-defined under $(TRM2_1)$. 

Also, we illustrate $(TRM2_2)$ as follows. Suppose that $D_1$ and $D_2$ differ by one $(TRM2_2)$ move. 
Then as shown in Figure~\ref{fig:TRM2_2}, $D_2$ can be obtained from $D_1'$ by the sequence of transverse Reidemeister moves $(TRM0_1)^{\pm1}$ and $(TRM2_1)$, which do not change the braid $\beta$.
Therefore the map $D\mapsto\beta(D)$ is well-defined under the move $(TRM2_2)$ as well.

The rest of the proof can be done by the similar process as above, and we omit the detail.
\end{proof}

\begin{figure}[ht]
\[
\xymatrix@C=3pc{
D_1=\vcenter{\hbox{\includegraphics{TRM2_2_2.pdf}}}\ar@<3ex>@{<->}[d]_{(TRM2_2)}\ar[r]^-{(\cdot)'}_-{(TRM0_*)} & D_1'=\vcenter{\hbox{\includegraphics{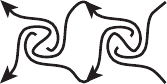}}}
\ar@{=}[r] &\vcenter{\hbox{\includegraphics{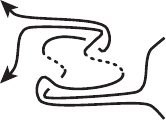}}} \ar@{<->}[d]^{(TRM0_1)^2}\\
D_2=\vcenter{\hbox{\includegraphics{TRM2_2_1.pdf}}} &\vcenter{\hbox{\includegraphics{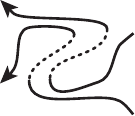}}}\ar@{<->}[l]_-{(TRM0_1)^2} & \vcenter{\hbox{\includegraphics{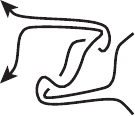}}}\ar@{<->}[l]_{(TRM2_1)}
}
\]
\caption{A decomposition of transverse Reidemeister move $(TRM2_2)$}
\label{fig:TRM2_2}
\end{figure}

\subsubsection{Transverse stabilizations and the map onto $\SK$}
Let us consider a canonical map $\|\cdot\|:\ST\to\SK$ defined by taking the underlying smooth singular links for given transverse links.
Similar to $\SL$, this map is also (infinitely) many-to-one and we consider moves, called {\em transverse (de)stabilization $(TrS)^{\pm1}$}, which change the singular transverse link type but preserve the underlying singular link type.
\begin{defn}[Transverse (de)stabilizations]
For $T\in\ST$, a {\em transverse stabilization} $(TS)$ on $T$ adds a {\em double-kink} at an upward vertical tangency of $T$ as follows, and its inverse operation is called the {\em transverse destabilization} and denoted by $(TS)^{-1}$.
\[
\xymatrix@C=3pc{
\vcenter{\hbox{\includegraphics{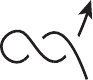}}}\quad\ar@<-.5ex>[r]_-{(TS)^{-1}}&\quad
\vcenter{\hbox{\includegraphics{K_bending_LU.pdf}}}\quad\ar@<-.5ex>[l]_-{(TS)}&
\vcenter{\hbox{\includegraphics{K_bending_RU.pdf}}}\ar@<.5ex>[r]^-{(TS)}\quad&\quad
\vcenter{\hbox{\includegraphics{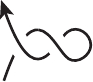}}}\ar@<.5ex>[l]^-{(TS)^{-1}}
}
\]
\end{defn}

Then since $(LS_+)$ makes two down cusps in addition, they correspond to two more kinks via the positive push-off, which is the same as $(TS)$. 
On the other hand, the braid negative stabilization $(BrS_-)$ will also become the transverse stabilization $(TS)$ via $\hat{(\cdot)}_\T$.
\[
\xymatrix@C=3pc{
\vcenter{\hbox{\includegraphics{L_cusp_RU.pdf}}}\ar[r]^{(\cdot)^+}\ar[d]_{(LS_+)} & \vcenter{\hbox{\includegraphics{K_bending_RU.pdf}}}\ar[d]^{(TS)} &
\vcenter{\hbox{\def\svgscale{0.8}}}\ar[r]^{\hat{(\cdot)}_\T}\ar[d]_{(BrS_-)} & 
\vcenter{\hbox{\def\svgscale{0.8}}}\ar[d]^{(TS)}\\
\vcenter{\hbox{\includegraphics{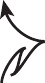}}}\ar[r]^{(\cdot)^+} & \vcenter{\hbox{\includegraphics{T_stabilization_R.pdf}}} &
\vcenter{\hbox{\def\svgscale{0.8}}}\ar[r]^{\hat{(\cdot)}_\T} & 
\vcenter{\hbox{\def\svgscale{0.8}
\begingroup%
  \makeatletter%
  \providecommand\color[2][]{%
    \errmessage{(Inkscape) Color is used for the text in Inkscape, but the package 'color.sty' is not loaded}%
    \renewcommand\color[2][]{}%
  }%
  \providecommand\transparent[1]{%
    \errmessage{(Inkscape) Transparency is used (non-zero) for the text in Inkscape, but the package 'transparent.sty' is not loaded}%
    \renewcommand\transparent[1]{}%
  }%
  \providecommand\rotatebox[2]{#2}%
  \ifx\svgwidth\undefined%
    \setlength{\unitlength}{111.49200289bp}%
    \ifx\svgscale\undefined%
      \relax%
    \else%
      \setlength{\unitlength}{\unitlength * \real{\svgscale}}%
    \fi%
  \else%
    \setlength{\unitlength}{\svgwidth}%
  \fi%
  \global\let\svgwidth\undefined%
  \global\let\svgscale\undefined%
  \makeatother%
  \begin{picture}(1,0.52792131)%
    \put(0,0){\includegraphics[width=\unitlength,page=1]{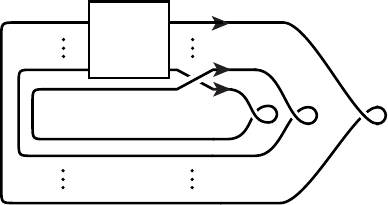}}%
    \put(0.30748497,0.40641491){\color[rgb]{0,0,0}\makebox(0,0)[lb]{\smash{$\beta$}}}%
  \end{picture}%
\endgroup%
}}
}
\]

Therefore $(\cdot)^+$ and $\hat{(\cdot)}_\T$ transform $(LS_+)$ and $(BrS_-)$ into $(TS)$, respectively,
\[
(LS_+)^+ = (TS)=\hat{(BrS_-)}_\T
\]
and we have the following theorem.

\begin{thm}\label{thm:STSK}
The map $\|\cdot\|$ induces bijections
\[
\|\cdot\|:\T/\{(TS)\}\to\K,\qquad \|\cdot\|:\ST/\{(TS)\}\to\SK.
\]
%
\end{thm}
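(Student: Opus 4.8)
The plan is to deduce the statement from the braid-level bijections already in hand (Theorems~\ref{thm:SBtoSK} and~\ref{thm:SBtoST}) rather than argue directly with transverse Reidemeister moves. The nonsingular case $\|\cdot\|\colon\T/\{(TS)\}\to\K$ is the classical transverse Markov theorem and is also the restriction of the argument below to $\B\subseteq\SB$, so I would concentrate on $\ST/\{(TS)\}\to\SK$.

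First I would record two compatibilities. \textbf{(i)} The map $\|\cdot\|$ is constant on $(TS)$-orbits: a transverse (de)stabilization inserts or removes a double-kink, which is a pair of first Reidemeister moves in a smooth diagram, so $\|(TS)(T)\|=\|T\|$ in $\SK$, and $\|\cdot\|$ descends to $\ST/\{(TS)\}\to\SK$. \textbf{(ii)} The triangle relating $\widehat{(\cdot)}\colon\SB\to\SK$, $\hat{(\cdot)}_\T\colon\SB\to\ST$ and $\|\cdot\|\colon\ST\to\SK$ commutes, i.e. $\|\hat\beta_\T\|=\widehat\beta$, since the transverse closure differs from the ordinary closure only by negative kinks on the backward strands, which are trivial in $\SK$. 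Surjectivity is then immediate: by the braiding theorem \cite{A,B} every $K\in\SK$ equals $\widehat\beta$ for some $\beta\in\SB$, and $\|\hat\beta_\T\|=\widehat\beta=K$, so $\|\cdot\|$ is onto already before quotienting.

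For injectivity I would take $T_1,T_2\in\ST$ with $\|T_1\|=\|T_2\|$, use Theorem~\ref{thm:SBtoST} to write $T_i=\hat{\beta_i}_\T$ with $\beta_i\in\SB$ well defined modulo $(BrS_+)$, deduce $\widehat{\beta_1}=\widehat{\beta_2}$ in $\SK$ from the triangle, and invoke Theorem~\ref{thm:SBtoSK} to get that $\beta_1$ and $\beta_2$ are related by braid conjugations $(BrC)$ and $(\pm)$-(de)stabilizations $(BrS_\pm)^{\pm1}$. Since $(BrC)$ is already trivial in $\SB=\mathbf{SB}/\{(BrC),(BrE)\}$, only the $(BrS_\pm)^{\pm1}$ survive; applying $\hat{(\cdot)}_\T$, each $(BrS_+)$ becomes an equivalence in $\ST$ and each $(BrS_-)$ becomes a transverse (de)stabilization $(TS)^{\pm1}$ via the relation $\hat{(BrS_-)}_\T=(TS)$ recorded just before the statement. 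Hence $[T_1]=[T_2]$ in $\ST/\{(TS)\}$, and combining with well-definedness and surjectivity yields the bijection. Equivalently the whole argument is the chain of identifications $\ST/\{(TS)\}\cong\bigl(\SB/\{(BrS_+)\}\bigr)/\{(BrS_-)\}=\SB/\{(BrS_\pm)\}\cong\SK$, and one checks the composite is $\|\cdot\|$.

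I do not expect a real obstacle, since every ingredient is prepared; the only point needing care is the bookkeeping that $(BrS_-)$ corresponds to $(TS)$ at the level of the bijection $\hat{(\cdot)}_\T\colon\SB/\{(BrS_+)\}\to\ST$ — that pulling the stabilized-braid relation back through the explicit inverse $T\mapsto\beta(T)$ of $\hat{(\cdot)}_\T$ really produces a $(TS)$-equivalence, with the negative stabilization applied to the correct rightmost backward strand. This follows from the commuting square $\hat{(BrS_-)}_\T=(TS)$ together with the fact that $\beta(T)$ is well defined in $\SB$ only up to $(BrS_+)$ (and conjugation), so the precise strand becomes immaterial after passing to the quotient.
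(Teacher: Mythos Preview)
Your argument is correct and coincides with the paper's own alternative proof, which deduces the result from Theorems~\ref{thm:SBtoSK} and~\ref{thm:SBtoST} via the identity $\hat{(BrS_-)}_\T=(TS)$. The paper's primary proof instead goes through Legendrians, using $(LS_+)^+=(TS)$ together with Proposition~\ref{prop:SLSK} and Theorem~\ref{thm:SLST}, but both routes are explicitly offered there.
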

For nonsingular cases, this is already known by \cite[Theorem~5.4]{FT}.
\begin{proof}
Since $(TS)$ is the same as $(LS_+)$ via the pushoff $(\cdot)^+$, we are done by Proposition~\ref{prop:SLSK} and Theorem~\ref{thm:SLST}.

Alternatively, this also follows from Theorem~\ref{thm:SBtoSK} and Theorem~\ref{thm:SBtoST} by using the relation $\hat{(BrS_-)}_\T=(TS)$.
\end{proof}

\section{Singular grid diagrams}\label{sec:extended_grid}
\subsection{Grid diagrams}
We first briefly review the definition of grid diagrams and the maps defined on the set $\G$ of grid diagrams.
Let $\mathbf{T}$ be the set of eight grid tiles depicted in Figure~\ref{fig:local_grid} together with all possible orientations.
Then the number of elements of $\mathbf{T}$ is precisely 17.
We call $\mathbf{T}$ the {\em tileset}.

\begin{defn}[(Pseudo) Grid diagrams]
A {\em pseudo grid diagram} $G$ of size $n$ is an $n\times n$ square grid of tiles in $\mathbf{T}$ satisfying the following.
\begin{enumerate}
\item Arcs in any pair of two contiguous tiles in $G$ must be {\em suitably-connected},
\item each row and column of $G$ has {\em at most} two corners.
\end{enumerate}

Then a {\em grid diagram} $G$ can be defined as a pseudo grid diagram such that arcs in $G$ do not meet the boundary of $G$ and there are no rows and columns without corners.
%
\end{defn}

\begin{rmk}
A pseudo grid diagram is a grid-diagrammatic analog of a tangle in $\K$, and by changing tileset $\mathbf{T}$, we may obtain a new set of grid diagrams.
\end{rmk}

\begin{rmk}\label{rmk:matrix}
Any grid diagram $G\in\G$ is completely determined by its oriented corners. That is, $G$ of size $n$ can be encoded as a square matrix of size $n$ whose entry is either empty or an oriented corner.
\end{rmk}

Let $G\in\G$ be given. Since $G$ itself is an oriented link diagram, it defines a map $\|\cdot\|_\K:\G\to\K$ by $\|G\|_\K=K$ where $K$ is a link given by a link diagram $G$.

A Legendrian link $L=\|G\|_\L$ is given by the front projection $\pi_F(L)$ which is obtained as follows.
We rotate $G$ 45 degrees counterclockwise, smooth up and down corners and turn left and right corners into cusps. The pictorial definition of $\|\cdot\|_{\L}$ is depicted in Figure~\ref{fig:G2L}.

\begin{figure}[ht]
\[
\xymatrix@C=0.5pc@R=1pc{
\G\ar[d]_{\|\cdot\|_\L} &  &
\vcenter{\hbox{\includegraphics[scale=0.8]{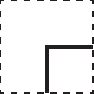}}}\ar@{|->}[d]& 
\vcenter{\hbox{\includegraphics[scale=0.8]{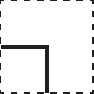}}}\ar@{|->}[d]& 
\vcenter{\hbox{\includegraphics[scale=0.8]{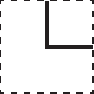}}}\ar@{|->}[d]& 
\vcenter{\hbox{\includegraphics[scale=0.8]{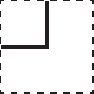}}}\ar@{|->}[d] & 
\vcenter{\hbox{\includegraphics[scale=0.8]{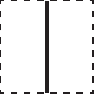}}}\ar@{|->}[d]& 
\vcenter{\hbox{\includegraphics[scale=0.8]{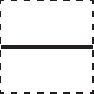}}}\ar@{|->}[d]&
\vcenter{\hbox{\includegraphics[scale=0.8]{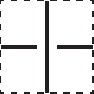}}}\ar@{|->}[d]\\
\L& &
\vcenter{\hbox{\includegraphics[scale=0.8]{L_cusp_L.pdf}}}&
\vcenter{\hbox{\includegraphics[scale=0.8]{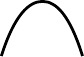}}}& 
\vcenter{\hbox{\includegraphics[scale=0.8]{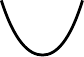}}}& 
\vcenter{\hbox{\includegraphics[scale=0.8]{L_cusp_R.pdf}}}&
\vcenter{\hbox{\includegraphics[scale=0.8]{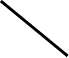}}}& 
\vcenter{\hbox{\includegraphics[scale=0.8]{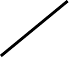}}}&
\vcenter{\hbox{\includegraphics[scale=0.8]{K_crossing.pdf}}}
}
\]
\caption{A pictorial definition of $\|\cdot\|_\L:\G\to\L$}
\label{fig:G2L}
\end{figure}

To define $\|\cdot\|_{\B}$, we will use the set $\bar{\mathbf{B}}$ of non-singular rectilinear braid diagrams.
For given $G\in\G$, let $H(G)=\{h_1,\dots,h_\ell\}$ be the set of all horizontal segments in $G$ whose orientation is {\em reversed}, namely, from right to left. We call these {\em reversed segments of type $\rm(I)$}.
\begin{align*}
\tag{I}\vcenter{\hbox{\includegraphics{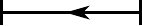}}}\subset G
\end{align*}
Here, small bars at both ends represent corners. Then a {\em flip $G^\vee\in\bar{\mathbf{B}}$} of $G$ is a rectilinear braid diagram defined by {\em flipping} all reversed horizontal segments in $G$ as described in \cite{MM, NT}.
\begin{align*}
\tag{$\rm I^\vee$}
\left(
\vcenter{\hbox{\includegraphics{horizontal_segment_1.pdf}}}
\right)^\vee:=
\vcenter{\hbox{\includegraphics{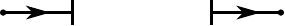}}}\subset G^\vee
\end{align*}
The small dots in flipped segments of $\rm(I^\vee)$ mean the left and right ends of the rectilinear braid diagram $G^\vee$, respectively.

We define $\|\cdot\|_{\B}$ as the composition of the flip map $(\cdot)^\vee$ and the map $sl:\bar{\mathbf{B}}\to\B$, which slants all vertical segments as defined before.
\[
\|\cdot\|_{\B}:=sl\left((\cdot)^\vee\right):\G\stackrel{(\cdot)^\vee}{\longrightarrow}\bar{\mathbf{B}}\stackrel{sl}{\longrightarrow}\B
\]

Finally, for any $G\in\G$, we define $\|\cdot\|_\T:\G\to\T$ as the composition 
\[
\|G\|_\T:=\hat{(\|G\|_\B)}_\T.
\]
Then as mentioned before, it is known by \cite{KN} that 
$\|G\|_\T = (\|G\|_\L)^+$, and so these maps fit into the commutative diagram in Figure~\ref{fig:maps}.

The set $\G$ of grid diagrams admits {\em elementary moves} consisting of {\em translations}, {\em commutations} and four types of {\em (de)stabilizations}. These moves are defined as depicted in Figures~\ref{fig:trans}, \ref{fig:commu} and \ref{fig:stabil}.
Then for each $\mathcal{C}\in\{\B,\L,\T,\K\}$, Proposition~\ref{prop:COSTNT} completely classifies the set of moves on $\G$ which induce equivalences in $\mathcal{C}$ via the map $\|\cdot\|_\mathcal{C}$. 

\subsection{Singular grid diagrams}
We first define singular grid diagrams by introducing a {\em singular tile} as follows.
\begin{defn}[Singular tiles]\label{defn:singulartile}
A {\em singular tile} $t_s$ is a tile of size $1\times 1$ such that
\begin{enumerate}
\item $t_s$ has exactly one end point on each side,
\item $\|t_s\|_\SK$ is given by a singular tangle in $\SK$ consisting of two arcs with exactly one singular point.
\end{enumerate}
\end{defn}

For a singular tile $t_s$, there are always 4 ways of orientations that it has. We usually denote oriented singular tiles by
$t_s^N, t_s^E, t_s^W$, and $t_s^S$.

\begin{rmk}
The characters `$N$', `$E$', `$W$' and `$S$' may {\em not} be related at all with the actual directions where arcs in $t_s$ are pointing.
\end{rmk}

\begin{defn}[Singular grid diagrams]
Let $\mathbf{T}_s:=\mathbf{T}\cup\{t_s^N, t_s^E, t_s^W, t_s^S\}$.
A {\em singular grid diagram extended by $t_s$} is a grid diagram which uses $\mathbf{T}_s$ as a tileset instead of $\mathbf{T}$.
We denote by $\SG_s$ the set of all singular grid diagrams extended by $t_s$.
\end{defn}

Then by using the singular tangle $\|t_s\|_\SK$, a function $\|\cdot\|_\SK:\SG_s\to\SK$ is well-defined.

\begin{rmk}\label{rmk:matrix_SG}
Similar to Remark~\ref{rmk:matrix}, the information of oriented corners and singular tiles in $G\in\SG$ completely determines $G$ itself.
\end{rmk}

\begin{defn}[Resolutions on $\SG_s$]
A singular tile $t_s$ is called {\em resolutive} if it has resolutions, that is, for each $\eta\in\{+,-,0\}$ and $*\in\{N,E,W,S\}$, there exists a pseudo grid diagram $\mathcal{R}_\eta(t_s^*)$ of no longer having any singular point. Then we say that $\SG_s$ {\em admits} resolutions.
\end{defn}

Indeed, the resolution on $\SG_s$ is defined as follows. Suppose that $t_s$ is resolutive.
Let $G\in\SG_s$ and $t_s^*$ be a singular tile contained in $G$ for some $*\in\{N,E,W,S\}$.
Then for each $\eta\in\{+,-,0\}$, the $\eta$-resolution $\mathcal{R}_\eta(G,t_s^*)$ is a singular grid diagram obtained from $G$ by replacing a singular tile $t_s^*$ with the pseudo grid diagram $\mathcal{R}_\eta(t_s^*)$.
We denote this replacement as
\begin{align}
\mathcal{R}_\eta(G,t_s^*):= (G\setminus t_s^*)\sqcup \mathcal{R}_\eta(t_s^*).
\end{align}

\begin{defn}[A unified description]\label{defn:unified}
Let $\SG_s$ be the set of singular grid diagrams which admits resolutions.
We say that $\SG_s$ gives a {\em unified description} if it satisfies the following.
Let $*\in\{N,E,W,S\}$ and $\mathcal{C}\in\{\B,\L,\T\}$.
\begin{enumerate}
\item There exist 
\begin{itemize}
\item [(i)] a singular braid $\|t_s^*\|_\SB$, 
\item [(ii)] a singular Legendrian tangle $\|t_s^*\|_\SL$,
\item [(iii)] a singular transverse tangle $\|t_s^*\|_\ST$.
\end{itemize} 
\item Each $\|t_s^*\|_\mathcal{SC}$ canonically produces a map
\begin{align*}
\|\cdot\|_\mathcal{SC}&:\SG_s\to\mathcal{SC}
\end{align*}
which extends $\|\cdot\|_\mathcal{C}:\G\to\mathcal{C}$. 
\item Resolutions commute with the maps defined on $\SG_s$. i.e., 
\begin{align*}
\mathcal{R}_\eta(\|t_s^*\|_\mathcal{SC})&=\|\mathcal{R}_\eta(t_s^*)\|_\mathcal{C}.
\end{align*}
\item The diagram below is commutative,
\[
\xymatrix@C=5pc{
\SG_s \ar[r]^{\|\cdot\|_{\SL}}\ar[d]_{\|\cdot\|_{\SB}}\ar@/^4.5pc/[ddrr]^{\|\cdot\|_\SK} \ar[rd]^{\|\cdot\|_{\ST}}
& \SL \ar[d]^{(\cdot)^+}\ar[ddr]^{\|\cdot\|}\\
\SB\ar[r]^{\hat{(\cdot)}_\T}\ar[rrd]_{\widehat{(\cdot)}} & \ST\ar[dr]^{\|\cdot\|}\\
& & \SK
}
\]
\end{enumerate}
\end{defn}

As discussed in Section~\ref{sec:singular links}, there are two kinds of local pictures presenting singular points in $\SL$, $\SB$, $\ST$ and $\SK$, which are 
\[
\vcenter{\hbox{\includegraphics{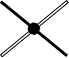}}}\in \SB, \ST, \SK\quad\text{ and }\quad
\vcenter{\hbox{\includegraphics{L_singularcrossing.pdf}}}\in \SL.
\]
Hence there are two natural candidates of singular tiles
\[
t_\times=\vcenter{\hbox{\includegraphics[scale=0.7]{grid_tile_singularcrossing.pdf}}}\quad\text{ and }\quad
t_\bullet=\vcenter{\hbox{\includegraphics[scale=0.7]{grid_tile_Leg_singularcrossing.pdf}}},
\]
whose realizations in $\SK$ are
\begin{align*}
\|t_\times\|_\SK= \vcenter{\hbox{\includegraphics{K_singularcrossing.pdf}}},\qquad
\|t_\bullet\|_\SK=\vcenter{\hbox{\includegraphics{KRM5_1_1.pdf}}}=\vcenter{\hbox{\includegraphics{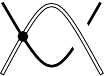}}}.
\end{align*}

Let $\SG_\times$ and $\SG_\bullet$ be two sets of singular grid diagrams extended by $t_\times$ and $t_\bullet$, respectively. Then two maps $\|\cdot\|_\SK:\SG_\times\to\SK$ and $\|\cdot\|_\SK:\SG_\bullet\to\SK$ are induced by $\|t_\times\|_\SK$ and $\|t_\bullet\|_\SK$, respectively.
Figure~\ref{fig:examplesinSG} shows examples of singular grid diagrams in $\SG_\times$ and $\SG_\bullet$.

\begin{figure}[ht]
\[
G_{3_1^{\times}}=\vcenter{\hbox{\includegraphics[scale=0.6]{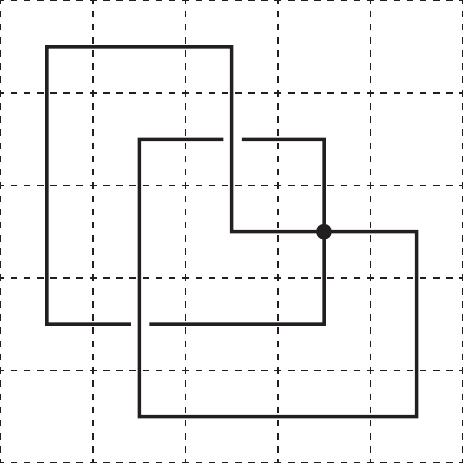}}}\in\SG_{\times}\qquad \qquad
G_{3_1^{\bullet}}=\vcenter{\hbox{\includegraphics[scale=0.6]{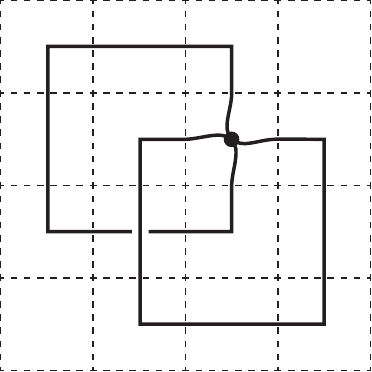}}}\in\SG_{\bullet}
\]
\[
\|G_{3_1^\times}\|_\SK=\|G_{3_1^\bullet}\|_\SK=\vcenter{\hbox{\includegraphics{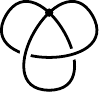}}}\in\SK
\]
\caption{Examples of extended grid diagrams in $\SG_{\times}$ and $\SG_{\bullet}$}
\label{fig:examplesinSG}
\end{figure}

\subsection{Singular grid diagrams extended by \texorpdfstring{$t_\bullet$}{t.}}
We define orientations and resolutions of $t_\bullet^*$ for $*\in\{N,E,W,S\}$ as depicted in Figure~\ref{fig:singularresolutions}.
Note that the naming convention and resolutions of $t_\bullet^*$ essentially come from those of $L_\bullet^*$ as described in Figure~\ref{fig:singularpoints} and Definition~\ref{defn:LegResolutions}.

\begin{figure}[ht]
\[
\begin{array}{l|c|c|c}
& \mathcal{R}_+ & \mathcal{R}_- &\mathcal{R}_0 \\
\hline&&&\\
t_\bullet^N=\vcenter{\hbox{\includegraphics[scale=0.7]{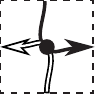}}} & 
\vcenter{\hbox{\includegraphics[scale=0.7]{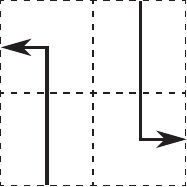}}} &
\vcenter{\hbox{\includegraphics[scale=0.7]{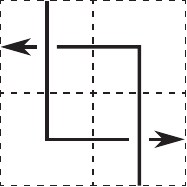}}} & 
\vcenter{\hbox{\includegraphics[scale=0.7]{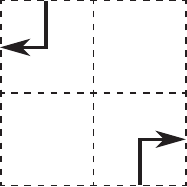}}}\\&&&\\
 \hline&&&\\
t_\bullet^E=\vcenter{\hbox{\includegraphics[scale=0.7]{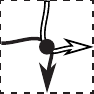}}} & 
\vcenter{\hbox{\includegraphics[scale=0.7]{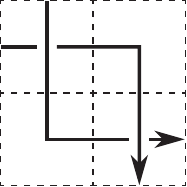}}} & 
\vcenter{\hbox{\includegraphics[scale=0.7]{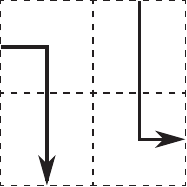}}} & 
\vcenter{\hbox{\includegraphics[scale=0.7]{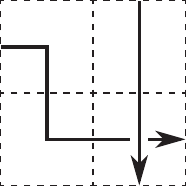}}}\\&&&\\
 \hline&&&\\
t_\bullet^W=\vcenter{\hbox{\includegraphics[scale=0.7]{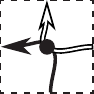}}} & 
\vcenter{\hbox{\includegraphics[scale=0.7]{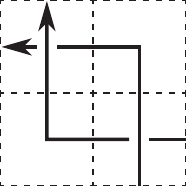}}} &
\vcenter{\hbox{\includegraphics[scale=0.7]{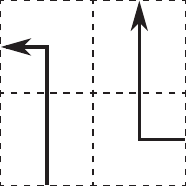}}} & 
\vcenter{\hbox{\includegraphics[scale=0.7]{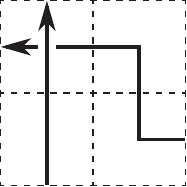}}} \\&&&\\
\hline&&&\\
t_\bullet^S=\vcenter{\hbox{\includegraphics[scale=0.7]{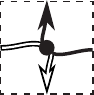}}} & 
\vcenter{\hbox{\includegraphics[scale=0.7]{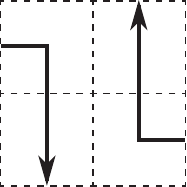}}} &
\vcenter{\hbox{\includegraphics[scale=0.7]{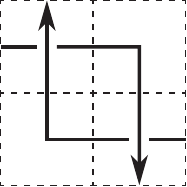}}} & 
\vcenter{\hbox{\includegraphics[scale=0.7]{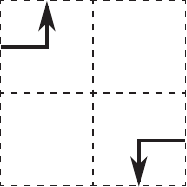}}} \\&&&\\
\hline
\end{array}
\]
\caption{Resolutions of oriented tiles $t_\bullet^*$}
\label{fig:singularresolutions}
\end{figure}

For each $\mathcal{C}\in\{\B,\L,\T\}$, we first define th map from $\SG_\bullet$ to $\SC$ and then prove the following theorem.

\begin{thm}[Theorem~\ref{thm:unified}]\label{thm:extension}
The set $\SG_\bullet$ gives a unified description in the sense of Definition~\ref{defn:unified}.
\end{thm}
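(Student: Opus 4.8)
The plan is to verify the four conditions of Definition~\ref{defn:unified} directly, by working tile-by-tile. The core of the argument is to specify, for each oriented singular tile $t_\bullet^*$ with $*\in\{N,E,W,S\}$, its images $\|t_\bullet^*\|_{\SB}$, $\|t_\bullet^*\|_{\SL}$ and $\|t_\bullet^*\|_{\ST}$, and then check that everything is compatible on the nose. First I would handle the Legendrian case, which is the most natural one: the naming $N,E,W,S$ and the resolutions of $t_\bullet^*$ in Figure~\ref{fig:singularresolutions} were chosen to match $L_\bullet^*$ and Definition~\ref{defn:LegResolutions}, so I would simply declare $\|t_\bullet^*\|_{\SL} := L_\bullet^*$ (the local front picture from Figure~\ref{fig:singularpoints}, read off after the same $45^\circ$ rotation and corner-to-cusp conversion used to define $\|\cdot\|_\L$ on ordinary tiles in Figure~\ref{fig:G2L}). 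This gives condition (1)(ii); the map $\|\cdot\|_{\SL}:\SG_\bullet\to\SL$ of (2) is then defined by the same rotate/smooth/cusp recipe on each tile, extending $\|\cdot\|_\L$; and condition (3) for $\mathcal{C}=\L$ is immediate because, by construction, each of the three pseudo grid diagrams $\mathcal{R}_\eta(t_\bullet^*)$ maps under $\|\cdot\|_\L$ to exactly the diagram replacement $\mathcal{R}_\eta(L_\bullet^*)$ from Definition~\ref{defn:LegResolutions} — this is a finite check of $4\times 3$ local pictures.

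Next I would treat the braid case. For each $*$ I would exhibit $\|t_\bullet^*\|_{\SB}$ as a short word in $\mathbf{SB}$ (built from the $\bar\sigma_i,\bar\sigma_i^{-1},\bar\xi_i$ rectilinear generators of Section~\ref{sec:rectilinear}), obtained by applying the flip-and-slant procedure $\|\cdot\|_\B = sl\circ(\cdot)^\vee$ to the tile, where a reversed horizontal strand through $t_\bullet$ gets flipped exactly as in case $(\mathrm{I}^\vee)$; the non-transverse crossing in the flipped tile is interpreted via the rectilinear convention $\bar\xi_i$ already fixed at the end of Section~\ref{sec:rectilinear}. Since the tile $t_\bullet$ carries a $\xi$-type (non-transverse) singular point, its image will involve $\xi_i = \sigma_i\tau_i$, and the commutativity of resolutions (3) for $\mathcal{C}=\B$ reduces to the identities $\mathcal{R}_+(\xi_i)=\sigma_i^2$, $\mathcal{R}_-(\xi_i)=e$, $\mathcal{R}_0(\xi_i)=\sigma_i$ from the Resolutions-on-$\SB$ definition, matched against $\|\mathcal{R}_\eta(t_\bullet^*)\|_\B$ — again a finite local check. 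The transverse case is then forced: set $\|t_\bullet^*\|_{\ST}:=(\|t_\bullet^*\|_{\SL})^+$ using the push-off rules of Figure~\ref{fig:pushoff} (which do list all four orientations of $L_\bullet$), define $\|\cdot\|_{\ST}:=\hat{(\|\cdot\|_{\SB})}_\T$, and note that Khandhawit–Ng-type compatibility $\|G\|_\T=(\|G\|_\L)^+$ extends tilewise once it is checked on the single new tile; the resolution identities for $\ST$ coincide with those for $\SK$ by definition, so (3) for $\mathcal{C}=\T$ follows from the $\SK$ computation.

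Finally, condition (4): the outer commutativity of the diagram in Definition~\ref{defn:unified}. On the ordinary-tile part this is exactly Figure~\ref{fig:maps}, already known from \cite{KN,NT,OST}; the only genuinely new content is that the three triangles involving the singular tile commute, i.e. $\|\;\|_{\SK}\circ(\text{push-off})\circ\|\;\|_{\SL}$, $\widehat{(\cdot)}\circ\|\;\|_{\SB}$, etc. all agree with $\|t_\bullet\|_\SK$. Since $\|t_\bullet\|_{\SK}$ is the local picture $\vcenter{\hbox{\includegraphics{KRM5_1_1.pdf}}}=\vcenter{\hbox{\includegraphics{KRM5_2_1.pdf}}}$, this is the statement that $\|L_\bullet\|=\vcenter{\hbox{\includegraphics{KRM5_1_1.pdf}}}$ (read off Figure~\ref{fig:SLSK}) and that the braid word for $t_\bullet$ closes up to the same singular tangle. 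I expect the main obstacle to be purely bookkeeping: organizing the $4$ orientations $\times$ $3$ resolutions $\times$ $3$ target categories so that the local pictures genuinely line up — in particular making sure the rectilinear/flip conventions for the non-transverse crossing $\bar\xi_i$ and the $45^\circ$ Legendrian rotation are used consistently so that, for each fixed $*$ and $\eta$, the three descriptions $\|\mathcal{R}_\eta(t_\bullet^*)\|_\B$, $\|\mathcal{R}_\eta(t_\bullet^*)\|_\L$, $\|\mathcal{R}_\eta(t_\bullet^*)\|_\SK$ are mutually compatible under the known maps $\hat{(\cdot)}_\T$, $(\cdot)^+$, $\|\cdot\|$, $\widehat{(\cdot)}$. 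There is no hard theorem hiding here — only a careful diagram chase on a handful of local tangle pictures, reducing at the end to the already-established non-singular statements of Proposition~\ref{prop:COSTNT} away from the singular tile.
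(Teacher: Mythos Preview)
Your proposal is correct and follows essentially the same route as the paper: define $\|\cdot\|_{\SL}$ tilewise via the $45^\circ$ rotation (Lemma~\ref{lem:map_sl}), extend the flip-and-slant map to get $\|\cdot\|_{\SB}$ (Lemma~\ref{lem:map_sb}), set $\|\cdot\|_{\ST}=\hat{(\|\cdot\|_{\SB})}_\T$ and verify tile-by-tile that it equals $(\|\cdot\|_{\SL})^+$ (Lemma~\ref{lem:map_st}), then assemble conditions (1)--(4) from these three lemmas. The one place the paper is more careful than your sketch is the flip $(\cdot)^\vee$: a reversed horizontal segment may now \emph{end} at a singular point rather than a corner, so the paper introduces three new flip rules $(\mathrm{II}^\vee),(\mathrm{III}^\vee),(\mathrm{IV}^\vee)$ beyond the nonsingular $(\mathrm{I}^\vee)$ you invoke --- this is precisely the ``bookkeeping'' you anticipate, but it is not literally ``exactly as in case~$(\mathrm{I}^\vee)$''.
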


\subsubsection{$\|\cdot\|_\SB:\SG_\bullet\to\SB$}
We will use the set $\bar{\mathbf{SB}}$ of singular rectilinear braid diagrams to define $\|\cdot\|_\SB$ as follows.
Let $G\in\SG_\bullet$ be given and $H(G)=\{h_1,\dots,h_\ell\}$ be the set of all horizontal segments in $G$ whose ends are at corners or singular points and orientations are reversed as before.
Then, including segments of type (I) described before, there are four types of reversed horizontal segments in $G$ as follows:
\[
\rm(I)\ \vcenter{\hbox{\includegraphics{horizontal_segment_1.pdf}}}\qquad
(II)\ \vcenter{\hbox{\includegraphics{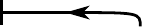}}}\qquad
(III)\ \vcenter{\hbox{\includegraphics{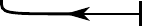}}}\qquad
(IV)\ \vcenter{\hbox{\includegraphics{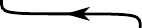}}}
\]
Here, small bars and curves at both ends represent corners and singular points, respectively.
Then a flip $G^\vee\in\bar{\mathbf{SB}}$ is defined as follows:
\begin{itemize}
\item [-] For a reversed horizontal segment of type (I), it is the same as described in $\rm(I^\vee)$.

\item [-] For a reversed horizontal segment $h$ of type (II), there are two cases according to the orientation of the another horizontal segment $h'$ which meets at the singular point in the right.
We define a flip of $h$ as follows.
\begin{align*}
\tag{$\rm II^\vee_1$}\left(
\vcenter{\hbox{
\begingroup%
  \makeatletter%
  \providecommand\color[2][]{%
    \errmessage{(Inkscape) Color is used for the text in Inkscape, but the package 'color.sty' is not loaded}%
    \renewcommand\color[2][]{}%
  }%
  \providecommand\transparent[1]{%
    \errmessage{(Inkscape) Transparency is used (non-zero) for the text in Inkscape, but the package 'transparent.sty' is not loaded}%
    \renewcommand\transparent[1]{}%
  }%
  \providecommand\rotatebox[2]{#2}%
  \ifx\svgwidth\undefined%
    \setlength{\unitlength}{49.81701776bp}%
    \ifx\svgscale\undefined%
      \relax%
    \else%
      \setlength{\unitlength}{\unitlength * \real{\svgscale}}%
    \fi%
  \else%
    \setlength{\unitlength}{\svgwidth}%
  \fi%
  \global\let\svgwidth\undefined%
  \global\let\svgscale\undefined%
  \makeatother%
  \begin{picture}(1,0.2430288)%
    \put(0,0){\includegraphics[width=\unitlength,page=1]{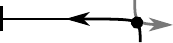}}%
    \put(0.19510979,0.16791582){\color[rgb]{0,0,0}\makebox(0,0)[lb]{\smash{$h$}}}%
    \put(0.80652209,0.15918137){\color[rgb]{0,0,0}\makebox(0,0)[lb]{\smash{$\color{gray}h'$}}}%
  \end{picture}%
\endgroup%
}}
\right)^\vee&=
\vcenter{\hbox{
\begingroup%
  \makeatletter%
  \providecommand\color[2][]{%
    \errmessage{(Inkscape) Color is used for the text in Inkscape, but the package 'color.sty' is not loaded}%
    \renewcommand\color[2][]{}%
  }%
  \providecommand\transparent[1]{%
    \errmessage{(Inkscape) Transparency is used (non-zero) for the text in Inkscape, but the package 'transparent.sty' is not loaded}%
    \renewcommand\transparent[1]{}%
  }%
  \providecommand\rotatebox[2]{#2}%
  \ifx\svgwidth\undefined%
    \setlength{\unitlength}{81.768bp}%
    \ifx\svgscale\undefined%
      \relax%
    \else%
      \setlength{\unitlength}{\unitlength * \real{\svgscale}}%
    \fi%
  \else%
    \setlength{\unitlength}{\svgwidth}%
  \fi%
  \global\let\svgwidth\undefined%
  \global\let\svgscale\undefined%
  \makeatother%
  \begin{picture}(1,0.30172679)%
    \put(0,0){\includegraphics[width=\unitlength,page=1]{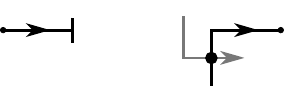}}%
    \put(0.12122312,0.26455989){\color[rgb]{0,0,0}\makebox(0,0)[lb]{\smash{$h^\vee$}}}%
    \put(0.86260827,0.06952328){\color[rgb]{0,0,0}\makebox(0,0)[lb]{\smash{$\color{gray}h'$}}}%
  \end{picture}%
\endgroup%
}}\\
\tag{$\rm II^\vee_2$}\left(
\vcenter{\hbox{
\begingroup%
  \makeatletter%
  \providecommand\color[2][]{%
    \errmessage{(Inkscape) Color is used for the text in Inkscape, but the package 'color.sty' is not loaded}%
    \renewcommand\color[2][]{}%
  }%
  \providecommand\transparent[1]{%
    \errmessage{(Inkscape) Transparency is used (non-zero) for the text in Inkscape, but the package 'transparent.sty' is not loaded}%
    \renewcommand\transparent[1]{}%
  }%
  \providecommand\rotatebox[2]{#2}%
  \ifx\svgwidth\undefined%
    \setlength{\unitlength}{50.60949817bp}%
    \ifx\svgscale\undefined%
      \relax%
    \else%
      \setlength{\unitlength}{\unitlength * \real{\svgscale}}%
    \fi%
  \else%
    \setlength{\unitlength}{\svgwidth}%
  \fi%
  \global\let\svgwidth\undefined%
  \global\let\svgscale\undefined%
  \makeatother%
  \begin{picture}(1,0.32413201)%
    \put(0,0){\includegraphics[width=\unitlength,page=1]{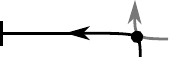}}%
    \put(0.19205462,0.16910767){\color[rgb]{0,0,0}\makebox(0,0)[lb]{\smash{$h$}}}%
    \put(0.83401553,0.1528676){\color[rgb]{0,0,0}\makebox(0,0)[lb]{\smash{$\color{gray}h'$}}}%
  \end{picture}%
\endgroup%
}}
\right)^\vee&=
\vcenter{\hbox{
\begingroup%
  \makeatletter%
  \providecommand\color[2][]{%
    \errmessage{(Inkscape) Color is used for the text in Inkscape, but the package 'color.sty' is not loaded}%
    \renewcommand\color[2][]{}%
  }%
  \providecommand\transparent[1]{%
    \errmessage{(Inkscape) Transparency is used (non-zero) for the text in Inkscape, but the package 'transparent.sty' is not loaded}%
    \renewcommand\transparent[1]{}%
  }%
  \providecommand\rotatebox[2]{#2}%
  \ifx\svgwidth\undefined%
    \setlength{\unitlength}{82.50793115bp}%
    \ifx\svgscale\undefined%
      \relax%
    \else%
      \setlength{\unitlength}{\unitlength * \real{\svgscale}}%
    \fi%
  \else%
    \setlength{\unitlength}{\svgwidth}%
  \fi%
  \global\let\svgwidth\undefined%
  \global\let\svgscale\undefined%
  \makeatother%
  \begin{picture}(1,0.30940069)%
    \put(0,0){\includegraphics[width=\unitlength,page=1]{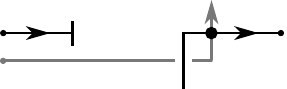}}%
    \put(0.12052384,0.26218732){\color[rgb]{0,0,0}\makebox(0,0)[lb]{\smash{$h^\vee$}}}%
    \put(0.75303037,0.06889979){\color[rgb]{0,0,0}\makebox(0,0)[lb]{\smash{$\color{gray}h'^\vee$}}}%
  \end{picture}%
\endgroup%
}}
\end{align*}
Here, the big dots mean the singular points as before.

\item [-] For reversed horizontal segments of types (III) and (IV), we define flips similarly as follows.
\begin{align*}
\tag{$\rm III^\vee_1$}\left(
\vcenter{\hbox{
\begingroup%
  \makeatletter%
  \providecommand\color[2][]{%
    \errmessage{(Inkscape) Color is used for the text in Inkscape, but the package 'color.sty' is not loaded}%
    \renewcommand\color[2][]{}%
  }%
  \providecommand\transparent[1]{%
    \errmessage{(Inkscape) Transparency is used (non-zero) for the text in Inkscape, but the package 'transparent.sty' is not loaded}%
    \renewcommand\transparent[1]{}%
  }%
  \providecommand\rotatebox[2]{#2}%
  \ifx\svgwidth\undefined%
    \setlength{\unitlength}{51.27183432bp}%
    \ifx\svgscale\undefined%
      \relax%
    \else%
      \setlength{\unitlength}{\unitlength * \real{\svgscale}}%
    \fi%
  \else%
    \setlength{\unitlength}{\svgwidth}%
  \fi%
  \global\let\svgwidth\undefined%
  \global\let\svgscale\undefined%
  \makeatother%
  \begin{picture}(1,0.34823359)%
    \put(0,0){\includegraphics[width=\unitlength,page=1]{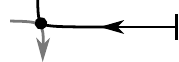}}%
    \put(0.58947379,0.25356386){\color[rgb]{0,0,0}\makebox(0,0)[lb]{\smash{$h$}}}%
    \put(-0.0064759,0.04422712){\color[rgb]{0,0,0}\makebox(0,0)[lb]{\smash{$\color{gray}h'$}}}%
  \end{picture}%
\endgroup%
}}
\right)^\vee&=
\vcenter{\hbox{
\begingroup%
  \makeatletter%
  \providecommand\color[2][]{%
    \errmessage{(Inkscape) Color is used for the text in Inkscape, but the package 'color.sty' is not loaded}%
    \renewcommand\color[2][]{}%
  }%
  \providecommand\transparent[1]{%
    \errmessage{(Inkscape) Transparency is used (non-zero) for the text in Inkscape, but the package 'transparent.sty' is not loaded}%
    \renewcommand\transparent[1]{}%
  }%
  \providecommand\rotatebox[2]{#2}%
  \ifx\svgwidth\undefined%
    \setlength{\unitlength}{81.768bp}%
    \ifx\svgscale\undefined%
      \relax%
    \else%
      \setlength{\unitlength}{\unitlength * \real{\svgscale}}%
    \fi%
  \else%
    \setlength{\unitlength}{\svgwidth}%
  \fi%
  \global\let\svgwidth\undefined%
  \global\let\svgscale\undefined%
  \makeatother%
  \begin{picture}(1,0.33585716)%
    \put(0,0){\includegraphics[width=\unitlength,page=1]{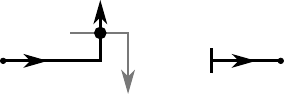}}%
    \put(0.06823403,0.00718974){\color[rgb]{0,0,0}\makebox(0,0)[lb]{\smash{$h^\vee$}}}%
    \put(0.12502738,0.22446439){\color[rgb]{0,0,0}\makebox(0,0)[lb]{\smash{$\color{gray}h'$}}}%
  \end{picture}%
\endgroup%
}}\\
\left(
\tag{$\rm III^\vee_2$}\vcenter{\hbox{
\begingroup%
  \makeatletter%
  \providecommand\color[2][]{%
    \errmessage{(Inkscape) Color is used for the text in Inkscape, but the package 'color.sty' is not loaded}%
    \renewcommand\color[2][]{}%
  }%
  \providecommand\transparent[1]{%
    \errmessage{(Inkscape) Transparency is used (non-zero) for the text in Inkscape, but the package 'transparent.sty' is not loaded}%
    \renewcommand\transparent[1]{}%
  }%
  \providecommand\rotatebox[2]{#2}%
  \ifx\svgwidth\undefined%
    \setlength{\unitlength}{51.27183432bp}%
    \ifx\svgscale\undefined%
      \relax%
    \else%
      \setlength{\unitlength}{\unitlength * \real{\svgscale}}%
    \fi%
  \else%
    \setlength{\unitlength}{\svgwidth}%
  \fi%
  \global\let\svgwidth\undefined%
  \global\let\svgscale\undefined%
  \makeatother%
  \begin{picture}(1,0.28529795)%
    \put(0,0){\includegraphics[width=\unitlength,page=1]{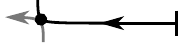}}%
    \put(0.58947379,0.21231627){\color[rgb]{0,0,0}\makebox(0,0)[lb]{\smash{$h$}}}%
    \put(-0.0064759,0.01146615){\color[rgb]{0,0,0}\makebox(0,0)[lb]{\smash{$\color{gray}h'$}}}%
  \end{picture}%
\endgroup%
}}
\right)^\vee&=
\vcenter{\hbox{
\begingroup%
  \makeatletter%
  \providecommand\color[2][]{%
    \errmessage{(Inkscape) Color is used for the text in Inkscape, but the package 'color.sty' is not loaded}%
    \renewcommand\color[2][]{}%
  }%
  \providecommand\transparent[1]{%
    \errmessage{(Inkscape) Transparency is used (non-zero) for the text in Inkscape, but the package 'transparent.sty' is not loaded}%
    \renewcommand\transparent[1]{}%
  }%
  \providecommand\rotatebox[2]{#2}%
  \ifx\svgwidth\undefined%
    \setlength{\unitlength}{81.8bp}%
    \ifx\svgscale\undefined%
      \relax%
    \else%
      \setlength{\unitlength}{\unitlength * \real{\svgscale}}%
    \fi%
  \else%
    \setlength{\unitlength}{\svgwidth}%
  \fi%
  \global\let\svgwidth\undefined%
  \global\let\svgscale\undefined%
  \makeatother%
  \begin{picture}(1,0.34560601)%
    \put(0,0){\includegraphics[width=\unitlength,page=1]{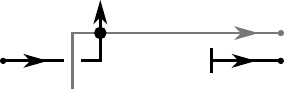}}%
    \put(0.05277102,0.00718693){\color[rgb]{0,0,0}\makebox(0,0)[lb]{\smash{$h^\vee$}}}%
    \put(0.42814771,0.25710276){\color[rgb]{0,0,0}\makebox(0,0)[lb]{\smash{$\color{gray}h'^\vee$}}}%
  \end{picture}%
\endgroup%
}}
\end{align*}

\begin{align*}
\tag{$\rm IV^\vee_1$}\left(\vcenter{\hbox{
\begingroup%
  \makeatletter%
  \providecommand\color[2][]{%
    \errmessage{(Inkscape) Color is used for the text in Inkscape, but the package 'color.sty' is not loaded}%
    \renewcommand\color[2][]{}%
  }%
  \providecommand\transparent[1]{%
    \errmessage{(Inkscape) Transparency is used (non-zero) for the text in Inkscape, but the package 'transparent.sty' is not loaded}%
    \renewcommand\transparent[1]{}%
  }%
  \providecommand\rotatebox[2]{#2}%
  \ifx\svgwidth\undefined%
    \setlength{\unitlength}{60.37341893bp}%
    \ifx\svgscale\undefined%
      \relax%
    \else%
      \setlength{\unitlength}{\unitlength * \real{\svgscale}}%
    \fi%
  \else%
    \setlength{\unitlength}{\svgwidth}%
  \fi%
  \global\let\svgwidth\undefined%
  \global\let\svgscale\undefined%
  \makeatother%
  \begin{picture}(1,0.2873341)%
    \put(0,0){\includegraphics[width=\unitlength,page=1]{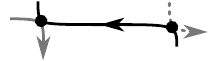}}%
    \put(0.42883969,0.20162402){\color[rgb]{0,0,0}\makebox(0,0)[lb]{\smash{$h$}}}%
    \put(-0.00549963,0.23699635){\color[rgb]{0,0,0}\makebox(0,0)[lb]{\smash{$\color{gray}h'$}}}%
    \put(0.84261363,0.20409185){\color[rgb]{0,0,0}\makebox(0,0)[lb]{\smash{$\color{gray}h''$}}}%
  \end{picture}%
\endgroup%
}}\right)^\vee=
\vcenter{\hbox{
\begingroup%
  \makeatletter%
  \providecommand\color[2][]{%
    \errmessage{(Inkscape) Color is used for the text in Inkscape, but the package 'color.sty' is not loaded}%
    \renewcommand\color[2][]{}%
  }%
  \providecommand\transparent[1]{%
    \errmessage{(Inkscape) Transparency is used (non-zero) for the text in Inkscape, but the package 'transparent.sty' is not loaded}%
    \renewcommand\transparent[1]{}%
  }%
  \providecommand\rotatebox[2]{#2}%
  \ifx\svgwidth\undefined%
    \setlength{\unitlength}{81.79999998bp}%
    \ifx\svgscale\undefined%
      \relax%
    \else%
      \setlength{\unitlength}{\unitlength * \real{\svgscale}}%
    \fi%
  \else%
    \setlength{\unitlength}{\svgwidth}%
  \fi%
  \global\let\svgwidth\undefined%
  \global\let\svgscale\undefined%
  \makeatother%
  \begin{picture}(1,0.42855812)%
    \put(0,0){\includegraphics[width=\unitlength]{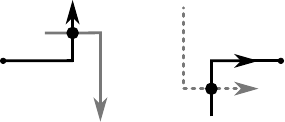}}%
    \put(0.03080135,0.0713535){\color[rgb]{0,0,0}\makebox(0,0)[lb]{\smash{$h^\vee$}}}%
    \put(0.66835096,0.32640882){\color[rgb]{0,0,0}\makebox(0,0)[lb]{\smash{$\color{gray}h''$}}}%
    \put(0.32457264,0.34180759){\color[rgb]{0,0,0}\makebox(0,0)[lb]{\smash{$\color{gray}h'$}}}%
  \end{picture}%
\endgroup%
}}\\
\tag{$\rm IV^\vee_2$}\left(\vcenter{\hbox{
\begingroup%
  \makeatletter%
  \providecommand\color[2][]{%
    \errmessage{(Inkscape) Color is used for the text in Inkscape, but the package 'color.sty' is not loaded}%
    \renewcommand\color[2][]{}%
  }%
  \providecommand\transparent[1]{%
    \errmessage{(Inkscape) Transparency is used (non-zero) for the text in Inkscape, but the package 'transparent.sty' is not loaded}%
    \renewcommand\transparent[1]{}%
  }%
  \providecommand\rotatebox[2]{#2}%
  \ifx\svgwidth\undefined%
    \setlength{\unitlength}{60.37341893bp}%
    \ifx\svgscale\undefined%
      \relax%
    \else%
      \setlength{\unitlength}{\unitlength * \real{\svgscale}}%
    \fi%
  \else%
    \setlength{\unitlength}{\svgwidth}%
  \fi%
  \global\let\svgwidth\undefined%
  \global\let\svgscale\undefined%
  \makeatother%
  \begin{picture}(1,0.26512219)%
    \put(0,0){\includegraphics[width=\unitlength,page=1]{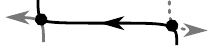}}%
    \put(0.42883969,0.18652999){\color[rgb]{0,0,0}\makebox(0,0)[lb]{\smash{$h$}}}%
    \put(-0.00549963,0.01378142){\color[rgb]{0,0,0}\makebox(0,0)[lb]{\smash{$\color{gray}h'$}}}%
    \put(0.84261363,0.18899783){\color[rgb]{0,0,0}\makebox(0,0)[lb]{\smash{$\color{gray}h''$}}}%
  \end{picture}%
\endgroup%
}}\right)^\vee=
\vcenter{\hbox{
\begingroup%
  \makeatletter%
  \providecommand\color[2][]{%
    \errmessage{(Inkscape) Color is used for the text in Inkscape, but the package 'color.sty' is not loaded}%
    \renewcommand\color[2][]{}%
  }%
  \providecommand\transparent[1]{%
    \errmessage{(Inkscape) Transparency is used (non-zero) for the text in Inkscape, but the package 'transparent.sty' is not loaded}%
    \renewcommand\transparent[1]{}%
  }%
  \providecommand\rotatebox[2]{#2}%
  \ifx\svgwidth\undefined%
    \setlength{\unitlength}{81.8bp}%
    \ifx\svgscale\undefined%
      \relax%
    \else%
      \setlength{\unitlength}{\unitlength * \real{\svgscale}}%
    \fi%
  \else%
    \setlength{\unitlength}{\svgwidth}%
  \fi%
  \global\let\svgwidth\undefined%
  \global\let\svgscale\undefined%
  \makeatother%
  \begin{picture}(1,0.40987818)%
    \put(0,0){\includegraphics[width=\unitlength]{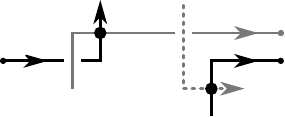}}%
    \put(0.05277102,0.07145876){\color[rgb]{0,0,0}\makebox(0,0)[lb]{\smash{$h^\vee$}}}%
    \put(0.42814771,0.32137459){\color[rgb]{0,0,0}\makebox(0,0)[lb]{\smash{$\color{gray}h'^\vee$}}}%
    \put(0,0){\includegraphics[width=\unitlength]{horizontal_segment_4_2_flip.pdf}}%
    \put(0.88122249,0.07823978){\color[rgb]{0,0,0}\makebox(0,0)[lb]{\smash{$\color{gray}h''$}}}%
  \end{picture}%
\endgroup%
}}
\end{align*}
\begin{align*}
\tag{$\rm IV^\vee_3$}\left(\vcenter{\hbox{
\begingroup%
  \makeatletter%
  \providecommand\color[2][]{%
    \errmessage{(Inkscape) Color is used for the text in Inkscape, but the package 'color.sty' is not loaded}%
    \renewcommand\color[2][]{}%
  }%
  \providecommand\transparent[1]{%
    \errmessage{(Inkscape) Transparency is used (non-zero) for the text in Inkscape, but the package 'transparent.sty' is not loaded}%
    \renewcommand\transparent[1]{}%
  }%
  \providecommand\rotatebox[2]{#2}%
  \ifx\svgwidth\undefined%
    \setlength{\unitlength}{60.37341893bp}%
    \ifx\svgscale\undefined%
      \relax%
    \else%
      \setlength{\unitlength}{\unitlength * \real{\svgscale}}%
    \fi%
  \else%
    \setlength{\unitlength}{\svgwidth}%
  \fi%
  \global\let\svgwidth\undefined%
  \global\let\svgscale\undefined%
  \makeatother%
  \begin{picture}(1,0.35943536)%
    \put(0,0){\includegraphics[width=\unitlength,page=1]{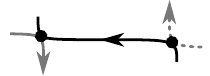}}%
    \put(0.42883969,0.20162402){\color[rgb]{0,0,0}\makebox(0,0)[lb]{\smash{$h$}}}%
    \put(-0.00549963,0.23699635){\color[rgb]{0,0,0}\makebox(0,0)[lb]{\smash{$\color{gray}h'$}}}%
    \put(0.84261363,0.20409185){\color[rgb]{0,0,0}\makebox(0,0)[lb]{\smash{$\color{gray}h''$}}}%
  \end{picture}%
\endgroup%
}}\right)^\vee=
\vcenter{\hbox{
\begingroup%
  \makeatletter%
  \providecommand\color[2][]{%
    \errmessage{(Inkscape) Color is used for the text in Inkscape, but the package 'color.sty' is not loaded}%
    \renewcommand\color[2][]{}%
  }%
  \providecommand\transparent[1]{%
    \errmessage{(Inkscape) Transparency is used (non-zero) for the text in Inkscape, but the package 'transparent.sty' is not loaded}%
    \renewcommand\transparent[1]{}%
  }%
  \providecommand\rotatebox[2]{#2}%
  \ifx\svgwidth\undefined%
    \setlength{\unitlength}{85.03048743bp}%
    \ifx\svgscale\undefined%
      \relax%
    \else%
      \setlength{\unitlength}{\unitlength * \real{\svgscale}}%
    \fi%
  \else%
    \setlength{\unitlength}{\svgwidth}%
  \fi%
  \global\let\svgwidth\undefined%
  \global\let\svgscale\undefined%
  \makeatother%
  \begin{picture}(1,0.42880959)%
    \put(0,0){\includegraphics[width=\unitlength]{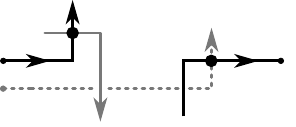}}%
    \put(0.77331201,0.2918852){\color[rgb]{0,0,0}\makebox(0,0)[lb]{\smash{$h^\vee$}}}%
    \put(0.00916696,0.00978509){\color[rgb]{0,0,0}\makebox(0,0)[lb]{\smash{$\color{gray}{h''}^\vee$}}}%
    \put(-0.00138755,0.30674545){\color[rgb]{0,0,0}\makebox(0,0)[lb]{\smash{$\color{gray}h'$}}}%
  \end{picture}%
\endgroup%
}}\\
\tag{$\rm IV^\vee_4$}\left(\vcenter{\hbox{
\begingroup%
  \makeatletter%
  \providecommand\color[2][]{%
    \errmessage{(Inkscape) Color is used for the text in Inkscape, but the package 'color.sty' is not loaded}%
    \renewcommand\color[2][]{}%
  }%
  \providecommand\transparent[1]{%
    \errmessage{(Inkscape) Transparency is used (non-zero) for the text in Inkscape, but the package 'transparent.sty' is not loaded}%
    \renewcommand\transparent[1]{}%
  }%
  \providecommand\rotatebox[2]{#2}%
  \ifx\svgwidth\undefined%
    \setlength{\unitlength}{61.5482079bp}%
    \ifx\svgscale\undefined%
      \relax%
    \else%
      \setlength{\unitlength}{\unitlength * \real{\svgscale}}%
    \fi%
  \else%
    \setlength{\unitlength}{\svgwidth}%
  \fi%
  \global\let\svgwidth\undefined%
  \global\let\svgscale\undefined%
  \makeatother%
  \begin{picture}(1,0.33938262)%
    \put(0,0){\includegraphics[width=\unitlength,page=1]{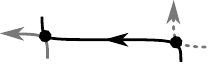}}%
    \put(0.4397416,0.18458347){\color[rgb]{0,0,0}\makebox(0,0)[lb]{\smash{$h$}}}%
    \put(0.01369264,0.01351837){\color[rgb]{0,0,0}\makebox(0,0)[lb]{\smash{$\color{gray}h'$}}}%
    \put(0.84561771,0.1870042){\color[rgb]{0,0,0}\makebox(0,0)[lb]{\smash{$\color{gray}h''$}}}%
  \end{picture}%
\endgroup%
}}\right)^\vee=
\vcenter{\hbox{
\begingroup%
  \makeatletter%
  \providecommand\color[2][]{%
    \errmessage{(Inkscape) Color is used for the text in Inkscape, but the package 'color.sty' is not loaded}%
    \renewcommand\color[2][]{}%
  }%
  \providecommand\transparent[1]{%
    \errmessage{(Inkscape) Transparency is used (non-zero) for the text in Inkscape, but the package 'transparent.sty' is not loaded}%
    \renewcommand\transparent[1]{}%
  }%
  \providecommand\rotatebox[2]{#2}%
  \ifx\svgwidth\undefined%
    \setlength{\unitlength}{85.03048975bp}%
    \ifx\svgscale\undefined%
      \relax%
    \else%
      \setlength{\unitlength}{\unitlength * \real{\svgscale}}%
    \fi%
  \else%
    \setlength{\unitlength}{\svgwidth}%
  \fi%
  \global\let\svgwidth\undefined%
  \global\let\svgscale\undefined%
  \makeatother%
  \begin{picture}(1,0.4288099)%
    \put(0,0){\includegraphics[width=\unitlength]{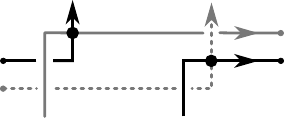}}%
    \put(0.77331201,0.08458133){\color[rgb]{0,0,0}\makebox(0,0)[lb]{\smash{$h^\vee$}}}%
    \put(0.19508718,0.00978509){\color[rgb]{0,0,0}\makebox(0,0)[lb]{\smash{$\color{gray}{h''}^\vee$}}}%
    \put(0.357979,0.33941582){\color[rgb]{0,0,0}\makebox(0,0)[lb]{\smash{$\color{gray}h'$}}}%
  \end{picture}%
\endgroup%
}}
\end{align*}
\end{itemize}

Especially, $(\cdot)^\vee$ maps the singular tile $t_\bullet$ locally as depicted in Figure~\ref{fig:flipsingulartile} according to its orientation.

\begin{figure}[ht]
\[
\xymatrix@C=0.5pc@R=1pc{
\SG_\bullet\ar[d]_{(\cdot)^\vee} & &
t_\bullet^N=\vcenter{\hbox{\includegraphics[scale=0.7]{grid_tile_Leg_singularcrossing_NE.pdf}}}\ar@{|->}[d] &
t_\bullet^E=\vcenter{\hbox{\includegraphics[scale=0.7]{grid_tile_Leg_singularcrossing_SE.pdf}}}\ar@{|->}[d] &
t_\bullet^W=\vcenter{\hbox{\includegraphics[scale=0.7]{grid_tile_Leg_singularcrossing_NW.pdf}}}\ar@{|->}[d] &
t_\bullet^S=\vcenter{\hbox{\includegraphics[scale=0.7]{grid_tile_Leg_singularcrossing_SW.pdf}}}\ar@{|->}[d] \\
\bar{\mathbf{SB}}& &
\vcenter{\hbox{\includegraphics{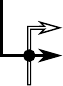}}}&
\vcenter{\hbox{\includegraphics{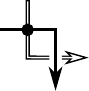}}}&
\vcenter{\hbox{\includegraphics{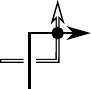}}}&
\vcenter{\hbox{\includegraphics{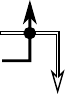}}}
}
\]
\caption{Images of the singular tile $t_\bullet$ under the flip $(\cdot)^\vee$}
\label{fig:flipsingulartile}
\end{figure}

We define $\|\cdot\|_{\SB}$ as the composition 
\[
\|\cdot\|_{\SB}:\SG_\bullet\stackrel{(\cdot)^\vee}{\longrightarrow}\bar{\mathbf{SB}}\stackrel{sl}{\longrightarrow}\SB
\]
of the flip $(\cdot)^\vee$ and the slanting map $sl$.

\begin{lem}\label{lem:map_sb}
The map $\|\cdot\|_{\SB}:\SG_\bullet\to\SB$ extends $\|\cdot\|_\B:\G\to\B$ and commutes with resolutions.
\end{lem}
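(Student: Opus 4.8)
The plan is to verify the two assertions of the lemma separately, each by reducing to a local computation.

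For the extension claim, note that a non-singular grid diagram $G\in\G\subset\SG_\bullet$ contains no singular tile, so every reversed horizontal segment of $G$ is of type $\rm(I)$. On segments of type $\rm(I)$ the extended flip $(\cdot)^\vee\colon\SG_\bullet\to\bar{\mathbf{SB}}$ is declared to act exactly by the rule $\rm(I^\vee)$, which is precisely the flip used to build $\|\cdot\|_\B$ in~\cite{MM,NT}, and the slanting map $sl\colon\bar{\mathbf{SB}}\to\SB$ restricts on $\bar{\mathbf{B}}$ to the slanting map $sl\colon\bar{\mathbf{B}}\to\B$. Hence $\|\cdot\|_\SB|_\G=sl\circ(\cdot)^\vee|_\G=\|\cdot\|_\B$, which is the first assertion.

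For the commutation with resolutions, fix $G\in\SG_\bullet$, a singular tile $t_\bullet^*$ of $G$ with $*\in\{N,E,W,S\}$, a sign $\eta\in\{+,-,0\}$, and let $p$ be the singular point of $\|G\|_\SB$ coming from $t_\bullet^*$. Both the map $\|\cdot\|_\SB=sl\circ(\cdot)^\vee$ and the resolution $\mathcal{R}_\eta(\cdot\,,t_\bullet^*)=(G\setminus t_\bullet^*)\sqcup\mathcal{R}_\eta(t_\bullet^*)$ are performed by replacements supported in a bounded rectangular subregion of the grid: the resolution changes only the block occupied by $t_\bullet^*$ together with the horizontal segments issuing from it, while the flip acts segment by segment. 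Therefore the singular-free rectilinear braid diagram $\bigl(\mathcal{R}_\eta(G,t_\bullet^*)\bigr)^\vee$ and the one obtained from $G^\vee$ by the $\eta$-resolution at $p$ differ only inside this subregion, and it suffices to compare the corresponding local models. Concretely, for each of the $4\times 3$ pairs $(*,\eta)$ one checks that the flip of the pseudo grid diagram $\mathcal{R}_\eta(t_\bullet^*)$ of Figure~\ref{fig:singularresolutions}, after slanting, equals the $\eta$-resolution of the singular braid word $\|t_\bullet^*\|_\SB$ read off from Figure~\ref{fig:flipsingulartile}, where the latter resolution uses $\mathcal{R}_\pm(\tau_i)=\sigma_i^{\pm1}$, $\mathcal{R}_0(\tau_i)=e$, $\mathcal{R}_+(\xi_i)=\sigma_i^2$, $\mathcal{R}_-(\xi_i)=e$ and $\mathcal{R}_0(\xi_i)=\sigma_i$.

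This is a finite, essentially routine, case analysis, and it succeeds because the orientations and resolutions of $t_\bullet^*$ in Figure~\ref{fig:singularresolutions} were defined precisely so as to mirror those of $L_\bullet^*$ in Definition~\ref{defn:LegResolutions}, which already match the braid resolutions above through the flip. The step that needs care — and where I would spend most of the effort — is the bookkeeping of reversed horizontal segments: the pseudo grid diagram $\mathcal{R}_\eta(t_\bullet^*)$ may itself contain new reversed segments of type $\rm(I)$ and may enlarge the grid, so one must confirm that these flip exactly to the crossings (or to straight-through, trivial strands) prescribed by the resolution formulas, and that any trivial strand introduced by the change of grid size is absorbed, so that the two local braids represent the same element of $\SB=\mathbf{SB}/\{(BrC),(BrE)\}$. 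Once the twelve local identities are verified, the lemma follows.
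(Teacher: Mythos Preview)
Your proposal is correct and follows essentially the same approach as the paper: both reduce the extension claim to the observation that grid diagrams in $\G$ have only type~$\rm(I)$ reversed segments, and both reduce the resolution claim to a local case-by-case check using Figures~\ref{fig:singularresolutions} and~\ref{fig:flipsingulartile}. The paper simply exhibits one of the twelve cases explicitly (namely $\mathcal{R}_+$ on $t_\bullet^N$) and declares the rest straightforward, whereas you spell out the general framework and flag the bookkeeping of reversed segments; the content is the same.
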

\begin{proof}
Since all reversed horizontal segments in $\G$ are of type (I), $(\cdot)^\vee$ for any segment of type (I) on $\SG_\bullet$ is the same as the ordinary flip map $(\cdot)^\vee$ on $\G$ defined in \cite{MM, NT}.
Therefore $(\cdot)^\vee$ extends the flip map for $\G$, and so $\|\cdot\|_{\SB}$ extends $\|\cdot\|_\B$ as well.

We use the diagrams depicted in Figure~\ref{fig:singularresolutions} and Figure~\ref{fig:flipsingulartile} to prove the commutativity with resolutions.
The proof is straightforward and so we show here the proof only for $\|\mathcal{R}_+(t_\bullet^{NE})\|_\SB=\mathcal{R}_+(\|t_\bullet^{NE}\|_\SB)$ as follows.
\[
\xymatrix@C=3pc@R=1pc{
& \vcenter{\hbox{\includegraphics[scale=0.7]{grid_tile_Leg_singularcrossing_NE.pdf}}}\ar[ld]_{\mathcal{R}_+}\ar[dr]^{\|\cdot\|_{\SB}} \\
\vcenter{\hbox{\includegraphics[scale=0.7]{grid_R+_NE.pdf}}}\ar[r]^{\|\cdot\|_{\SB}} \ar[r] & \vcenter{\hbox{\includegraphics{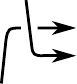}}} = \vcenter{\hbox{\includegraphics{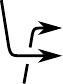}}}
&
\vcenter{\hbox{\includegraphics{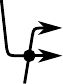}}}\ar[l]_-{\mathcal{R}_+}
}
\]
\end{proof}

\subsubsection{$\|\cdot\|_{\SL}:\SG_\bullet\to\SL$}
For $G\in\SG_\bullet$, a singular Legendrian links $L=\|G\|_{\SL}$ is given by the front projection $\pi_F(L)$ which is obtained by exactly the same way of $\|\cdot\|_\L$. More precisely, 
the pictorial definition of $\|\cdot\|_\SL$ is given by Figure~\ref{fig:G2L} together with 
\[
\left\|\vcenter{\hbox{\includegraphics[scale=0.7]{grid_tile_Leg_singularcrossing.pdf}}}\right\|_\SL:=
\vcenter{\hbox{\includegraphics{L_singularcrossing.pdf}}}.
\]

Then we have the lemma below, where the proof is straightforward and so we omit the proof.
\begin{lem}\label{lem:map_sl}
The map $\|\cdot\|_{\SL}:\SG_\bullet\to\SL$ extends $\|\cdot\|_\L:\G\to\L$ and commutes with all resolutions.
\end{lem}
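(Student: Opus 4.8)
The plan is to reduce both assertions to purely local, tile-by-tile verifications, exploiting that $\|\cdot\|_{\SL}$ is defined cell by cell exactly as $\|\cdot\|_\L$ is, and that both the singular resolutions on $\SG_\bullet$ and the Legendrian resolutions on $\SL$ are local diagram replacements.

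For the extension claim I would first observe that the recipe of Figure~\ref{fig:G2L} together with $t_\bullet\mapsto L_\bullet$ genuinely produces a map into $\SL$: every tile of $\mathbf{T}$ is sent to a piece of a front with no vertical tangency other than cusps, and every oriented singular tile $t_\bullet^*$ is sent to one of the four admissible local models $L_\bullet^*$ of Figure~\ref{fig:singularpoints}, so the assembled picture $\pi_F(\|G\|_{\SL})$ is a regular front projection of an honest singular Legendrian link. Since a grid diagram $G\in\G$ contains no singular tile, $\|G\|_{\SL}$ is then obtained by precisely the recipe that defines $\|G\|_\L$, whence $\|\cdot\|_{\SL}|_{\G}=\|\cdot\|_\L$; in particular $\|\cdot\|_{\SL}$ sends any $\G$-subtangle occurring inside some $G\in\SG_\bullet$ to a front tangle exactly as $\|\cdot\|_\L$ would, which is what makes the right-hand side of the resolution identity below meaningful.

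For commutativity with resolutions, by locality and Definition~\ref{defn:unified}(3) it suffices to prove, for every $\eta\in\{+,-,0\}$ and every $*\in\{N,E,W,S\}$,
\[
\mathcal{R}_\eta\bigl(\|t_\bullet^*\|_{\SL}\bigr)=\bigl\|\mathcal{R}_\eta(t_\bullet^*)\bigr\|_\L ,
\]
where $\|t_\bullet^*\|_{\SL}=L_\bullet^*$ is the tangle of Figure~\ref{fig:singularpoints}, the left-hand side is computed by Definition~\ref{defn:LegResolutions}, and the right-hand side applies the recipe of Figure~\ref{fig:G2L} to the pseudo grid diagram $\mathcal{R}_\eta(t_\bullet^*)$ of Figure~\ref{fig:singularresolutions}. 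The key point is that the twelve entries of Figure~\ref{fig:singularresolutions} were drawn exactly so that the $45^\circ$ rotation, the smoothing of $NE$ and $SW$ corners, and the cusping of $NW$ and $SE$ corners carry each of them onto the corresponding entry of Definition~\ref{defn:LegResolutions}. So the proof consists of drawing, for each pair $(\eta,*)$, the diagram $\mathcal{R}_\eta(t_\bullet^*)$, performing these local operations, and matching the outcome against the listed Legendrian resolution — in exactly the style of the sample commuting diagram in the proof of Lemma~\ref{lem:map_sb}. The $z$-reflection symmetry present in both Figure~\ref{fig:singularpoints}/Definition~\ref{defn:LegResolutions} and Figure~\ref{fig:singularresolutions} pairs up orientations, so only about six genuinely distinct checks remain.

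The only real obstacle is bookkeeping rather than mathematics: for each tile appearing in $\mathcal{R}_\eta(t_\bullet^*)$ one must keep careful track of which corner becomes a cusp and which is smoothed, and of the orientations of the two strands, so that the resulting front agrees with the picture in Definition~\ref{defn:LegResolutions} on the nose, not merely up to Legendrian isotopy. Once this dictionary between the two families of pictures is fixed, each individual case is immediate, and I would present one representative case ($\eta=+$, $*=N$, say) in a displayed diagram and simply assert that the remaining cases are checked identically.
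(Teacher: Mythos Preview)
Your proposal is correct and is exactly the straightforward local verification the paper has in mind; in fact the paper omits the proof entirely, stating only that it is straightforward. Your tile-by-tile check, reduced by the $z$-reflection symmetry and illustrated on one representative case, is precisely how one would fill in that omission.
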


\subsubsection{$\|\cdot\|_\ST:\SG_\bullet\to\ST$}
We define $\|\cdot\|_{\ST}$ as the composition of $\|\cdot\|_\SB$ and the transverse closure $\hat{(\cdot)}_\T$
\[
\|\cdot\|_\ST = \hat{(\|\cdot\|_{\SB})}_\T:\SG\to\SB\to\ST.
\]
Then we have the lemma as follows.
\begin{lem}\label{lem:map_st}
The map $\|\cdot\|_{\ST}$ extends $\|\cdot\|_\T:\G\to\T$ and commutes with resolutions.

Moreover, for any $G\in\SG_\bullet$, 
\[
\|G\|_\ST=\left(\|G\|_\SL\right)^+.
\]
\end{lem}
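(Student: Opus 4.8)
The plan is to deduce all three assertions from the two facts already available, namely Lemma~\ref{lem:map_sb}, which controls $\|\cdot\|_\SB$, and the nonsingular identity $\|G\|_\T=(\|G\|_\L)^+$ of Khandhawit--Ng~\cite{KN}.

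First I would dispatch the extension claim. Since $\|\cdot\|_\ST$ is by definition the composite $\hat{(\cdot)}_\T\circ\|\cdot\|_\SB$, and since by Lemma~\ref{lem:map_sb} the restriction of $\|\cdot\|_\SB$ to $\G\subset\SG_\bullet$ is $\|\cdot\|_\B$, the restriction of $\|\cdot\|_\ST$ to $\G$ is $\hat{(\|\cdot\|_\B)}_\T=\|\cdot\|_\T$, which is exactly the definition recalled earlier. For compatibility with resolutions, Lemma~\ref{lem:map_sb} already gives $\|\mathcal{R}_\eta(G,t_\bullet^*)\|_\SB=\mathcal{R}_\eta(\|G\|_\SB,p)$, where $p$ is the singular point corresponding to $t_\bullet^*$, so it is enough to see that the transverse closure intertwines the two resolution operators, i.e. that $\hat{(\mathcal{R}_\eta(\beta,p))}_\T=\mathcal{R}_\eta(\hat\beta_\T)$ for $p\in\{\tau_i,\xi_i\}$. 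Both resolutions are replacements supported in a small disk about the singular point, whereas the transverse closure only connects strands and inserts $(-)$-kinks far away; so the identity reduces to comparing the braid words $\mathcal{R}_\eta(\tau_i)=\sigma_i,\sigma_i^{-1},e$ and $\mathcal{R}_\eta(\xi_i)=\sigma_i^{2},e,\sigma_i$ with the diagrammatic replacements defining $\mathcal{R}_\eta$ on $\ST$, read off the local picture of $p$ inside $\hat\beta_\T$ (using the interpretation of $\bar\xi_i$ given in Section~\ref{sec:rectilinear}). These agree case by case, and composing with Lemma~\ref{lem:map_sb} gives that $\|\cdot\|_\ST$ commutes with all resolutions.

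For the ``moreover'' part, $\|G\|_\ST=(\|G\|_\SL)^+$, I would compare the two constructions tile by tile. The nonsingular identity is \cite{KN}, and that statement already reconciles the non-local flip $(\cdot)^\vee$ underlying $\|\cdot\|_\SB$ with the local front construction $\|\cdot\|_\L$ and the push-off; consequently it is enough to verify that, at each singular tile $t_\bullet^*$, the local transverse tangle produced on the braid side --- the contribution of $t_\bullet^*$ to $\hat{(\|G\|_\SB)}_\T$, read from the flip picture in Figure~\ref{fig:flipsingulartile} after slanting and after inserting the relevant $(-)$-kinks --- coincides, as a transverse tangle, with $(\|t_\bullet^*\|_\SL)^+$, which is $(L_\bullet^*)^+$ read from the last columns of Figure~\ref{fig:pushoff}. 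This is a finite verification over $*\in\{N,E,W,S\}$ and the orientation sub-cases of Figure~\ref{fig:flipsingulartile}, carried out up to the transverse Reidemeister moves of Figure~\ref{fig:transverseRM}. Since both $\|\cdot\|_\ST$ and $(\|\cdot\|_\SL)^+$ commute with resolutions (Lemma~\ref{lem:map_sl} plus the routine compatibility of $(\cdot)^+$ with resolutions), one may equivalently run an induction on the number of singular tiles, using \cite{KN} for the base case and the resolutions $\mathcal{R}_{\pm},\mathcal{R}_0$ of a chosen singular tile to peel it off; this reduces the statement to the same local comparison.

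The main obstacle is conceptual rather than computational: because the flip $(\cdot)^\vee$, and hence $\|\cdot\|_\SB$, is not a local operation on grid diagrams, one has to be certain that introducing singular tiles does not interact with it in an unexpected way. The point is that \cite{KN} already takes care of this for ordinary tiles, and that each singular tile $t_\bullet^*$ enters both $\hat{(\|G\|_\SB)}_\T$ and $(\|G\|_\SL)^+$ only through a purely local modification, so the whole matter comes down to the finite local check at singular points; the only auxiliary point to record there is the compatibility of $(\cdot)^+\colon\SL\to\ST$ with resolutions, which is immediate by comparing the local Legendrian models of Definition~\ref{defn:LegResolutions} with the resolutions on $\ST$.
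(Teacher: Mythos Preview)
Your proposal is correct and follows essentially the same outline as the paper's proof: deduce the extension and resolution-compatibility from Lemma~\ref{lem:map_sb} and the corresponding properties of $\hat{(\cdot)}_\T$, then reduce the identity $\|G\|_\ST=(\|G\|_\SL)^+$ to a finite local check at the singular tiles $t_\bullet^*$ once the nonsingular case is in hand.

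The one notable difference is in how the nonsingular reduction is justified. You invoke \cite{KN} as a black box and then argue that singular tiles enter only locally. The paper instead re-derives the mechanism explicitly: it observes that after flipping a reversed horizontal segment and taking the transverse closure, one can slide the closed-up segment back \emph{under} all other strands to near its original position, so that the resulting transverse front looks like the original grid diagram except at the corners (and singular tiles) adjacent to reversed segments. This makes the tile-by-tile comparison self-evident and handles corners and singular tiles uniformly, whereas your argument treats the nonsingular part as already known and grafts the singular check onto it. Both work; the paper's version is more self-contained and makes clearer \emph{why} the non-local flip does not obstruct a local comparison, which is exactly the worry you flag in your last paragraph. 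Your alternative induction on the number of singular tiles via resolutions is not really an independent route --- as you note, it still bottoms out in the same local check --- and on its own would not suffice, since equality of all resolutions does not in general force equality of the unresolved objects.
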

\begin{proof}
The first statement is obvious from the definition since both $\|\cdot\|_\SB$ and $\hat{(\cdot)}_\T$ on $\SB$ extends $\|\cdot\|_\B$ and $\hat{(\cdot)}_\T$ on $\B$, respectively, and they commute with resolutions.

Recall the definition of the closure $\hat{(\cdot)}_\T:\SB\to\ST$. Then the composition $\|\cdot\|_{\ST} = \hat{(\|\cdot\|_{\SB})}_\T$ is schematically as follows. 
\[
\xymatrix@R=0pc{
\SG_\bullet\ar[r]^{\|\cdot\|_{\SB}} & \SB \ar[r]^{\hat{(\cdot)}_\T} & \ST\\
\vcenter{\hbox{\includegraphics{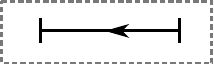}}}\ar@{|->}[r] &
\vcenter{\hbox{\includegraphics{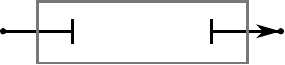}}}\ar@{|->}[r]&
\vcenter{\hbox{\includegraphics{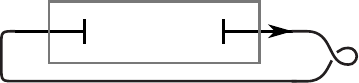}}}\ar@{=}[dd]\\ \\
&&\vcenter{\hbox{\includegraphics{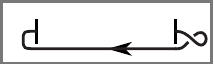}}}\quad\ 
}
\]
The last picture is obtained by moving the reversed horizontal segment under all other segments to the near place where the original segment was. Then it is very similar to the original grid diagram except for corners where the reversed horizontal segment ends.
Indeed, each corner is mapped to a part of a transverse link as follows.
\begin{align*}
\left\|\vcenter{\hbox{\includegraphics[scale=0.7]{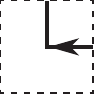}}}\right\|_{\ST}=\vcenter{\hbox{\includegraphics{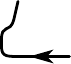}}}=\left(\left\| \vcenter{\hbox{\includegraphics[scale=0.7]{grid_tile_LU.pdf}}} \right\|_{\SL}\right)^+\\
\left\|\vcenter{\hbox{\includegraphics[scale=0.7]{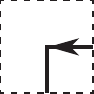}}}\right\|_{\ST}=\vcenter{\hbox{\includegraphics{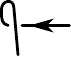}}}=\left(\left\| \vcenter{\hbox{\includegraphics[scale=0.7]{grid_tile_LD.pdf}}} \right\|_{\SL}\right)^+\\
\left\|\vcenter{\hbox{\includegraphics[scale=0.7]{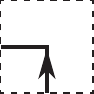}}}\right\|_{\ST}=\vcenter{\hbox{\includegraphics{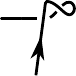}}}=\left(\left\| \vcenter{\hbox{\includegraphics[scale=0.7]{grid_tile_UL.pdf}}} \right\|_{\SL}\right)^+\\
\left\|\vcenter{\hbox{\includegraphics[scale=0.7]{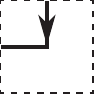}}}\right\|_{\ST}=\vcenter{\hbox{\includegraphics{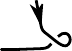}}}=\left(\left\| \vcenter{\hbox{\includegraphics[scale=0.7]{grid_tile_DL.pdf}}} \right\|_{\SL}\right)^+
\end{align*}
Notice that the vertical segments are slightly slanted by the map $sl:\bar{\mathbf{SB}}\to\SB$, and so there is no downward vertical tangencies, and the second equality on each row is obtained by the 45 degree rotation.

Also we can consider the exactly same diagram as above for the reversed horizontal segments of the other types, and then we have a front projection of a transverse link which is very similar to the original grid diagram except for singular tiles.
We use the diagrams in Figure~\ref{fig:flipsingulartile} to find out how singular tiles are mapped to pieces of transverse links.
\[
\begin{array}{c}
\|t_\bullet^E\|_{\ST}=
\vcenter{\hbox{\includegraphics{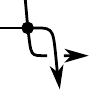}}}=
\left(\|t_\bullet^E\|_{\SL}\right)^+\\
\|t_\bullet^S\|_{\ST}= 
\vcenter{\hbox{\includegraphics{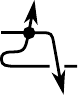}}}=
\left(\|t_\bullet^S\|_{\SL}\right)^+\\
\|t_\bullet^W\|_{\ST}= 
\vcenter{\hbox{\includegraphics{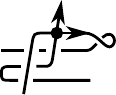}}}=\vcenter{\hbox{\includegraphics{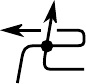}}}=
\left(\|t_\bullet^W\|_{\SL}\right)^+\\
\|t_\bullet^N\|_{\ST}=
\vcenter{\hbox{\includegraphics{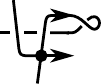}}} = \vcenter{\hbox{\includegraphics{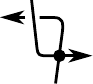}}} =
\left(\|t_\bullet^N\|_{\SL}\right)^+
\end{array}
\]

As seen above, $\|\cdot\|_{\ST}=(\|\cdot\|_{\SL})^+$ for every corner and singular tile and therefore the lemma is proved.
\end{proof}

\begin{proof}[Proof of Theorem~\ref{thm:extension}]
By Lemmas~\ref{lem:map_sb}, \ref{lem:map_sl}  and \ref{lem:map_st}, all maps $\|\cdot\|_\SC$ for $\mathcal{C}\in\{\B,\L,\T\}$ satisfy the conditions (1), (2) and (3) of Definition~\ref{defn:unified}.

Furthermore by definition of $\|\cdot\|_\SK:\SG_\bullet\to\SK$, it is obvious that for any $G\in\SG_\bullet$,
\[
\|G\|_\SK = \hat{\|G\|_\SB} = \left\| \|G\|_\SL \right\| = \left\| \|G\|_\ST\right\|,
\]
and therefore $\|\cdot\|_\SK$ commutes with resolutions.
Together with the second statement of Lemma~\ref{lem:map_st}, the diagram in Definition~\ref{defn:unified} is commutative.
Therefore the condition (4) is satisfied and this completes the proof of Theorem~\ref{thm:extension}.
\end{proof}

%
%
%
%
%
%

\subsection{Singular grid diagrams extended by \texorpdfstring{$t_\times$}{tx}}
Recall the set $\SG_\times$ of singular grid diagrams extended by $t_\times$.
\[
t_\times=\vcenter{\hbox{\includegraphics[scale=0.7]{grid_tile_singularcrossing.pdf}}}\in\SG_\times,\qquad
\|t_\times\|_\SK=\vcenter{\hbox{\includegraphics{K_singularcrossing.pdf}}}\in\SK.
\]

It looks more natural than $t_\bullet$, but surprisingly $\SG_\times$ never give a unified description whatever resolutions and maps onto $\SB,\SL,\ST$ and $\SK$ are defined.

\begin{thm}[Theorem~\ref{thm:notunified}]\label{thm:G_times}
The set of singular grid diagrams $\SG_\times$ does not give a unified description.
\end{thm}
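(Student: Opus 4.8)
The plan is to argue by contradiction. Suppose $\SG_\times$ does give a unified description in the sense of Definition~\ref{defn:unified}; then $t_\times$ is resolutive, with pseudo grid diagrams $\mathcal{R}_\eta(t_\times^*)$ for $\eta\in\{+,-,0\}$ and $*\in\{N,E,W,S\}$, and there are maps $\|\cdot\|_\SB,\|\cdot\|_\SL,\|\cdot\|_\ST$ satisfying conditions (1)--(4). The contradiction will be extracted entirely from \emph{resolved} grid diagrams: if $G\in\SG_\times$ has a single singular tile $t_\times^*$, then $\mathcal{R}_+(G),\mathcal{R}_0(G),\mathcal{R}_-(G)$ all lie in $\G$, so the classical invariants of $\|\mathcal{R}_\eta(G)\|_\L\in\L$ and of $\|\mathcal{R}_\eta(G)\|_\SB\in\B$ are rigidly determined by $\mathcal{R}_\eta(G)$ through Proposition~\ref{prop:COSTNT}, while commutation of the maps with resolutions pins down how these invariants must move with $\eta$. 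It is this clash of ``rigid on $\G$'' against ``forced to move with $\eta$'' that I expect cannot be reconciled.

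Concretely, I would first isolate the constraints. Writing $tb_{\mathrm{gr}}(G')=w(G')-\tfrac12 c(G')$ for the Thurston--Bennequin number read off a grid $G'\in\G$ (writhe and cusps, as in Figure~\ref{fig:G2L}) and $rot_{\mathrm{gr}}(G')$ for the rotation number, and using that both are local sums over the tiles, Lemma~\ref{lem:resolutionandtb} together with condition (3) for $\|\cdot\|_\SL$ forces the local contributions $\theta(\eta)$ of the three resolution blocks to satisfy $\theta(+)-\theta(0)=1$ and $\theta(0)-\theta(-)=1$. Next, condition (4) and the fact that $\|\cdot\|_\SB$ commutes with resolutions (so $\|\mathcal{R}_\eta(G)\|_\SB=\mathcal{R}_\eta(\|G\|_\SB)$ has a strand number independent of $\eta$ and an exponent sum that jumps by $\pm1$) combine, via $sl\!\left((\cdot)^+\right)=tb-rot$ and the classical $sl(\hat\beta_\T)=e(\beta)-n(\beta)$, to force $rot(\|\mathcal{R}_+(G)\|_\L)=rot(\|\mathcal{R}_0(G)\|_\L)=rot(\|\mathcal{R}_-(G)\|_\L)$; equivalently, the rotation contributions of the three blocks coincide. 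Thus the blocks $\mathcal{R}_\eta(t_\times^*)$ are constrained (a) topologically --- $\|\mathcal{R}_\pm(t_\times^*)\|_\SK$ is the $\pm$ crossing change and $\|\mathcal{R}_0(t_\times^*)\|_\SK$ the oriented smoothing, with the boundary orientations of $t_\times^*$ --- and (b) numerically, by the jump relations for $\theta$ together with the equality of rotation contributions.

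Then I would show these constraints are incompatible. The starting observation is that for the cheapest realizations the numbers already misbehave: $\mathcal{R}_+(t_\times^*)$ can be a single crossing tile ($\theta=1$, no cusp), whereas the oriented smoothing $\mathcal{R}_0(t_\times^*)$ is forced to carry extra cusps --- the two smoothed arcs have horizontal velocities of opposite sign at their two boundary ends, so each must turn back --- and these cusps are of ``$NW$'' and ``$SE$'' type, so $\theta(+)-\theta(0)\ge 2>1$. Any repair requires inserting zigzags (equivalently $(NW)$/$(SE)$-stabilizations) into the blocks, each of which shifts $\theta$ by $1$ but also shifts that block's rotation contribution by $\pm1$; the task is to reconcile this with the equality of the three rotation contributions \emph{simultaneously for all four orientations $*$}, noting that for the ``cheap'' realizations of the $\pm$-crossings the roles of $\mathcal{R}_+$ and $\mathcal{R}_-$ swap between orientations, since a grid crossing tile is always vertical-over-horizontal. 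Carrying out this last step --- showing there is genuinely \emph{no} admissible choice of pseudo-grid resolution blocks --- is the main obstacle; I expect it to require a careful case analysis of writhe, of the up/down cusp counts, and of the boundary data of the $(1,1,1,1)$-tangles involved, uniformly in $*$, possibly supplemented by an invariant finer than $tb$ and $rot$. I would test the whole argument on the explicit small example with $\|G\|_\SK$ equal to the pinched unknot $\|L_{\bigcirc\!\!\!\bigcirc}\|$ of Example~\ref{ex:L00} (so that $\mathcal{R}_\pm(G)$ are unknots and $\mathcal{R}_0(G)$ is the $2$-component unlink, and every quantity above is computable by hand), and only then argue that the obstruction persists for arbitrary choices. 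By contrast, this is exactly where $t_\bullet$ succeeds: its resolutions in Figure~\ref{fig:singularresolutions} introduce no spurious turnbacks, so $\theta$ and the rotation contributions jump by precisely the required amounts.
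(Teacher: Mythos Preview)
Your approach is genuinely different from the paper's, but it has a real gap that you yourself flag: the case analysis ruling out \emph{every} admissible choice of resolution blocks $\mathcal{R}_\eta(t_\times^*)$ is never carried out, and there is no reason to believe $tb$ and $rot$ alone will close it. The jump relations and rotation-equality constraints you derive are correct, but inserting zigzags gives infinitely many candidate blocks, and you have only a handful of scalar equations per orientation; there is no finiteness to exploit, and ``possibly supplemented by an invariant finer than $tb$ and $rot$'' is not a proof. In particular, nothing in your outline prevents the adversary from choosing large resolution blocks whose local $tb$- and $rot$-contributions are tuned to satisfy all your constraints simultaneously.

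The paper bypasses the resolution blocks entirely and works instead with the singular Legendrian \emph{tangle} $L_\times:=\|t_\times^E\|_\SL$, which must exist by condition~(1) and satisfy $\|L_\times\|=\|t_\times^E\|_\SK$ by condition~(4). Its Legendrian tangle closure $\hat L_\times=L_\bullet\oplus L_\times$ has two singular points; applying $\mathcal{R}_-$ to both (these are resolutions in $\SL$, already defined and independent of any choices on $\SG_\times$) gives a Legendrian knot whose underlying type is the left-handed trefoil, so by the known bound $\overline{tb}(3_1^-)=-6$ and Lemma~\ref{lem:resolutionandtb} one gets $tb(\hat L_\times)\le -4$. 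On the other hand, any $G_{\bigcirc\!\!\!\bigcirc}\in\SG_\times$ with $\|G_{\bigcirc\!\!\!\bigcirc}\|_\SL=L_{\bigcirc\!\!\!\bigcirc}$ contains (after reorienting) a copy of $t_\times^E$, hence $L_\times$ embeds in $L_{\bigcirc\!\!\!\bigcirc}$ as a subtangle; Corollary~\ref{cor:complement} and the structural Theorem~\ref{thm:deg2unit} then force $\hat L_\times=L_{\bigcirc\!\!\!\bigcirc}$, which has $tb=-2$. That is the contradiction. The decisive inputs are the max-$tb$ bound for $3_1^-$ and the rigidity result Theorem~\ref{thm:deg2unit} on tangle summands of $L_{\bigcirc\!\!\!\bigcirc}$---neither of which your outline invokes---and no analysis of grid resolution blocks is needed at all. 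Interestingly, the ``small example'' you proposed only as a sanity check is in fact the whole argument.
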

\begin{proof}
Suppose that $\SG_\times$ gives a unified description.
Let 
\[
t_\times^E:=\vcenter{\hbox{\includegraphics[scale=0.7]{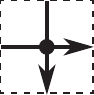}}}\in\SG_\times,\qquad
L_\times:=\left\|t_\times^E\right\|_{\SL}\in\SL.
\]
Then the Legendrian closure $\hat{L_\times}$ has exactly 2 singular points, and so we get a nonsingular Legendrian link $\mathcal{R}_-^2\left(\hat{L_\times}\right)\in\K$ whose knot type is precisely the left-handed trefoil as follows:
\begin{align*}
\left\|\mathcal{R}_-^2\left(\hat{L_\times}\right)\right\|
&=\mathcal{R}_-^2\left( \left\| \vcenter{\hbox{\footnotesize\def\svgscale{0.7}
\begingroup%
  \makeatletter%
  \providecommand\color[2][]{%
    \errmessage{(Inkscape) Color is used for the text in Inkscape, but the package 'color.sty' is not loaded}%
    \renewcommand\color[2][]{}%
  }%
  \providecommand\transparent[1]{%
    \errmessage{(Inkscape) Transparency is used (non-zero) for the text in Inkscape, but the package 'transparent.sty' is not loaded}%
    \renewcommand\transparent[1]{}%
  }%
  \providecommand\rotatebox[2]{#2}%
  \ifx\svgwidth\undefined%
    \setlength{\unitlength}{96.73697328bp}%
    \ifx\svgscale\undefined%
      \relax%
    \else%
      \setlength{\unitlength}{\unitlength * \real{\svgscale}}%
    \fi%
  \else%
    \setlength{\unitlength}{\svgwidth}%
  \fi%
  \global\let\svgwidth\undefined%
  \global\let\svgscale\undefined%
  \makeatother%
  \begin{picture}(1,0.53716093)%
    \put(0,0){\includegraphics[width=\unitlength,page=1]{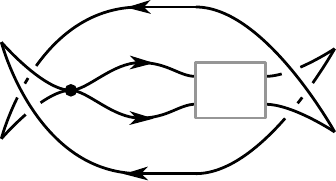}}%
    \put(0.60763264,0.22412966){\color[rgb]{0,0,0}\makebox(0,0)[lb]{\smash{$L_\times$}}}%
  \end{picture}%
\endgroup%
}}  
\right\| \right)
= \mathcal{R}_-^2\left(
\vcenter{\hbox{\footnotesize\def\svgscale{0.7}
\begingroup%
  \makeatletter%
  \providecommand\color[2][]{%
    \errmessage{(Inkscape) Color is used for the text in Inkscape, but the package 'color.sty' is not loaded}%
    \renewcommand\color[2][]{}%
  }%
  \providecommand\transparent[1]{%
    \errmessage{(Inkscape) Transparency is used (non-zero) for the text in Inkscape, but the package 'transparent.sty' is not loaded}%
    \renewcommand\transparent[1]{}%
  }%
  \providecommand\rotatebox[2]{#2}%
  \ifx\svgwidth\undefined%
    \setlength{\unitlength}{128.65548822bp}%
    \ifx\svgscale\undefined%
      \relax%
    \else%
      \setlength{\unitlength}{\unitlength * \real{\svgscale}}%
    \fi%
  \else%
    \setlength{\unitlength}{\svgwidth}%
  \fi%
  \global\let\svgwidth\undefined%
  \global\let\svgscale\undefined%
  \makeatother%
  \begin{picture}(1,0.47972197)%
    \put(0,0){\includegraphics[width=\unitlength,page=1]{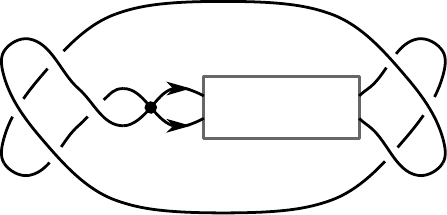}}%
    \put(0.52393454,0.2092987){\color[rgb]{0,0,0}\makebox(0,0)[lb]{\smash{$\|L_\times\|$}}}%
  \end{picture}%
\endgroup%
}}  
\right)\\
&= \mathcal{R}_-^2\left(\vcenter{\hbox{\includegraphics[scale=0.7]{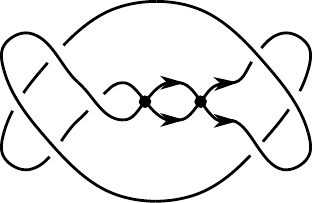}}}  \right) = \vcenter{\hbox{\includegraphics[scale=0.7]{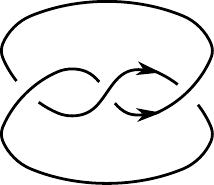}}}\in\K
\end{align*}
However, the maximal Thurston-Bennequin number is known to be $-6$ (see \cite{CN}) and therefore by Lemma~\ref{lem:resolutionandtb},
\begin{equation}\label{eq:tbLx}
tb(\hat{L_\times}) = tb\left(\mathcal{R}_-^2\left(\hat{L_\times}\right)\right) +2 \le -4.
\end{equation}

Recall the singular Legendrian 2-component link $L_{\bigcirc\!\!\!\bigcirc}$ in Example~\ref{ex:L00}, and let $G_{\bigcirc\!\!\!\bigcirc}$ be a singular grid diagram in $\SG_\times$ such that
\[
\|G_{\bigcirc\!\!\!\bigcirc}\|_{\SL} = L_{\bigcirc\!\!\!\bigcirc}.
\]

Since $L_{\bigcirc\!\!\!\bigcirc}$ contains two singular points, $G_{\bigcirc\!\!\!\bigcirc}$ contains exactly two singular tiles $t_\times^1$ and $t_\times^2$.
Then by changing orientations if necessary, we may assume that 
\[
t_\times^1=t_\times^E=\vcenter{\hbox{\includegraphics[scale=0.7]{grid_tile_singularcrossing_arrow.pdf}}}\subset
G_{\bigcirc\!\!\!\bigcirc}.
\]
Recall that the singular Legendrian link type of $L_{\bigcirc\!\!\!\bigcirc}$ is independent of the choice of the orientations as mentioned in Example~\ref{ex:L00}.
Then by taking $\|\cdot\|_\SL$, we have
\[
\|t_\times^1\|_\SL = L_\times \subset L_{\bigcirc\!\!\!\bigcirc},
\]
and so $\hat{L_\times}=L_{\bigcirc\!\!\!\bigcirc}$ by Corollary~\ref{cor:complement} and Theorem~\ref{thm:deg2unit}.
This is contradiction since
\[
-2=tb(L_{\bigcirc\!\!\!\bigcirc})=tb(\hat{L_\times})\le -4
\]
by equation (\ref{eq:tbLx}), and we conclude that $\G_\times$ does not give a unified description in the sense of Definition~\ref{defn:unified}.
\end{proof}

\section{Proof of main theorem}\label{sec:proof_mainthm}
From now on, we denote $\SG_\bullet$ by $\SG$. In this section, we define moves on $\SG$ and observe the invariances of maps defined on $\SG$ under the moves, and finally prove the main theorem of this paper.

\subsection{Moves on $\SG$}
We consider the three types of moves on $\SG$ as follows: (i) translations and commutations, (ii) (de)stabilizations, (iii) rotations, swirl and flype.

\subsubsection{Translations and commutations}
Recall that translations in $\G$ are cyclic permutations of horizontal and vertical edges as shown in Figure~\ref{fig:trans}.
However, they may not be well-defined in $\SG$.
For example, as shown in Figure~\ref{fig:translation}, the cyclic permutations on $G_{3_1^{\bullet}}$ which move the bottm row or the leftmost column to the top or the right are possible, but the cyclic permutations on the other ways are impossible. The solid and dashed lines in black indicate the segments where the cyclic permutations can and can not be performed, respectively.
\begin{figure}[ht]
\[
\xymatrix@C=5pc{
& \vcenter{\hbox{\includegraphics[scale=0.7]{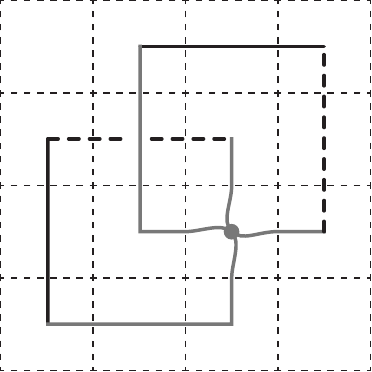}}} &\\
\text{Undefined} &
\vcenter{\hbox{\includegraphics[scale=0.7]{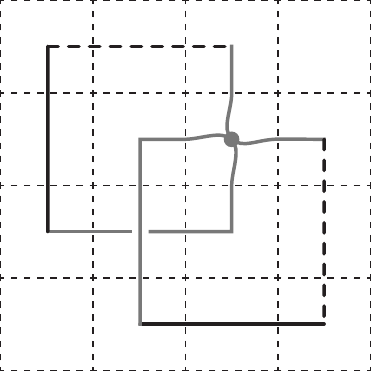}}}\ar[l]_-{Right-to-Left}\ar[r]^-{Left-to-Right}\ar[u]^-{Bottom-to-Top}\ar[d]_-{Top-to-Bottom}&\vcenter{\hbox{\includegraphics[scale=0.7]{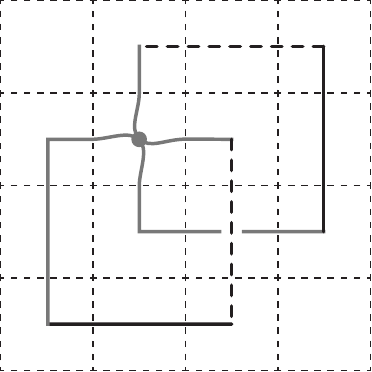}}} \\
&\text{Undefined} &
}
\]
\caption{Cyclic permutations on $G_{3_1^{\bullet}}\in\SG$}
\label{fig:translation}
\end{figure}

The important observation we can get from Figure~\ref{fig:translation} is as follows.
A horizontal cyclic permutation on the leftmost or rightmost vertical segment of given singular grid diagram is possible if and only if none of horizontal segments adjacent to the vertical segment ends at a singular point.
For a vertical cyclic permutation, the same criterion holds.

Now we consider a generalization of translations for singular grid diagrams as follows.
For given $G\in\SG$, we denote {\em horizontal} and {\em vertical decompositions} for $G$ into two nonempty pseudo singular diagrams by $(H_1^L|H_2^R)$ and $\displaystyle{\left(\frac{V_1^U}{V_2^D}\right)}$, respectively.
A horizontal or vertical decomposition, say $(H_1^L|H_2^R)$ or $\displaystyle{\left(\frac{V_1^U}{V_2^D}\right)}$, for $G$ is {\em admissible} if all horizontal or vertical segments joining $H_1^L$ and $H_2^R$ or $V_1^U$ and $V_2^D$ end only at {\em corners}, respectively. See Figure~\ref{fig:admissibledecompositions} for examples of admissible and non-admissible decompositions.

\begin{figure}[ht]
\subfigure[Admissible decompositions]{\makebox[.5\textwidth]{\includegraphics[scale=0.7]{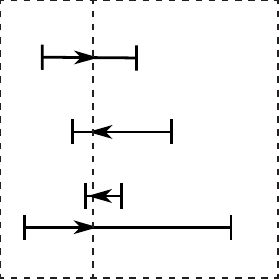} \qquad 
\includegraphics[scale=0.7]{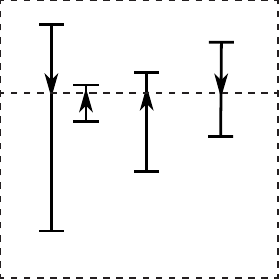}}}
\subfigure[Non-admissible decomposition]{\makebox[.4\textwidth]{\includegraphics[scale=0.7]{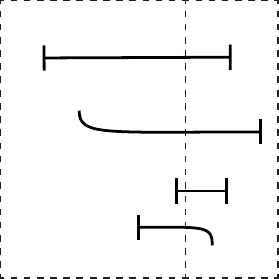}}}
\caption{Admissible and non-admissible decompositions}
\label{fig:admissibledecompositions}
\end{figure}

Suppose that $(H_1^L|H_2^R)$ is an admissible decomposition and let $\{h_1,\dots,h_m\}$ be the set of horizontal edges of $H_1^L$ whose right ends are on the boundary. Then we can flip all $h_i$'s to the left to obtain a pseudo singular grid diagram $H_1^R$. We denote this process by $f_{L\to R}$.

Moreover, one can define the similar processes $f_{R\to L}$, $f_{U\to D}$ and $f_{D\to U}$ by flipping all segments having one end at boundary to the opposite direction. Then they satisfy that
\[
\xymatrix@C=2.5pc@R=0pc{
H_1^L, H_2^L\ar@<.5ex>[r]^{f_{L\to R}} & H_1^R, H_2^R\ar@<.5ex>[l]^{f_{R\to L}} &
V_1^U, V_2^U\ar@<.5ex>[r]^{f_{U\to D}} & V_1^D, V_2^D\ar@<.5ex>[l]^{f_{D\to U}}\\
\vcenter{\hbox{\includegraphics[scale=0.7]{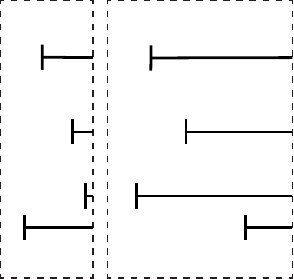}}}\ar@<.5ex>[r]^{f_{L\to R}} & 
\vcenter{\hbox{\includegraphics[scale=0.7]{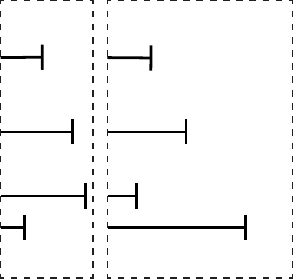}}}\ar@<.5ex>[l]^{f_{R\to L}} &
\vcenter{\hbox{\includegraphics[scale=0.7]{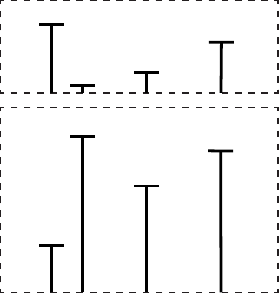}}}\ar@<.5ex>[r]^{f_{U\to D}} & 
\vcenter{\hbox{\includegraphics[scale=0.7]{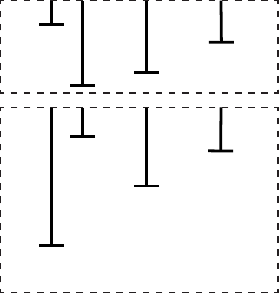}}}\ar@<.5ex>[l]^{f_{D\to U}}
}
\]
and one can glue $H_2^L$ and $H_1^R$ or $V_2^U$ and $V_1^D$ to obtain a new singular grid diagram $(H_2^L|H_1^R)$ or $\displaystyle{\left(\frac{V_2^U}{V_1^D}\right)}$, respectively.

\begin{defn}[Translations]
Let $(H_1^L|H_2^R)$ and $\displaystyle{\left(\frac{V_1^U}{V_2^D}\right)}$ be admissible horizontal and vertical decompositions for $G\in\SG$, respectively. Then {\em horizontal translation $(Tr_H)$} and {\em vertical translation $(Tr_V)$} on these decompositions are defined as follows.
\[
\xymatrix@C=2.5pc@R=0pc{
(H_1^L|H_2^R)\ar@{<->}[r]^-{(Tr_H)}&(H_2^L|H_1^R)&
\displaystyle{\left(\frac{V_1^U}{V_2^D}\right)}\ar@{<->}[r]^-{(Tr_V)}&\displaystyle{\left(\frac{V_2^U}{V_1^D}\right)}\\
\vcenter{\hbox{\includegraphics[scale=0.7]{Translation_H_1.pdf}}}\ar@{<->}[r]^-{(Tr_H)}&
\vcenter{\hbox{\includegraphics[scale=0.7]{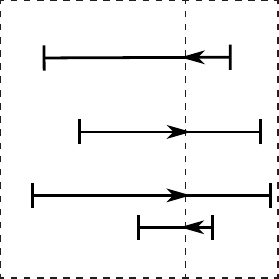}}}&
\vcenter{\hbox{\includegraphics[scale=0.7]{Translation_V_1.pdf}}}\ar@{<->}[r]^-{(Tr_V)}&
\vcenter{\hbox{\includegraphics[scale=0.7]{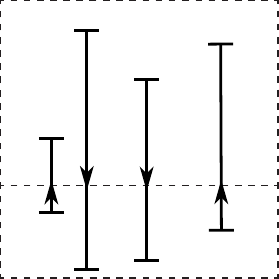}}}
}
\]
\end{defn}

On the other hand, for each column $c$ in $G$, corners and singular points form a disjoint union $I(c)$ of open segments and can be identified with intervals in the vertical axes $\mathbb{R}_y$ so that $I(c)\subset \mathbb{R}_y$.
Similarly, for a row $r$ in $G$, we may define $I(r)\subset\mathbb{R}_x$ consisting of horizontal segments without end points and singular points.
For the diagrammatic illustration of the open intervals, see Figures~\ref{fig:noninterleaving} and \ref{fig:interleaving}.

We say that two contiguous columns $c$ and $c'$ in $G$ are {\em non-interleaving} if $I(c)$ and $I(c')$ are non-interleaving as collections of open intervals in $\mathbb{R}_y$, otherwise $c$ and $c'$ are {\em interleaving}.
The non-interleavingness for two contiguous rows $r$ and $r'$ in $G$ is defined by the similar manner. 
Figure~\ref{fig:noninterleaving} and Figure~\ref{fig:interleaving} show non-interleaving and interleaving examples, respectively.

\begin{figure}[ht]
\[
\vcenter{\hbox{\includegraphics[scale=0.7]{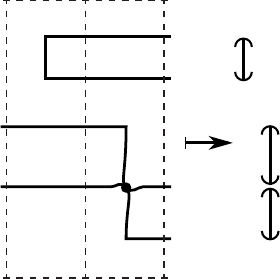}}}\qquad\qquad
\vcenter{\hbox{\includegraphics[scale=0.7]{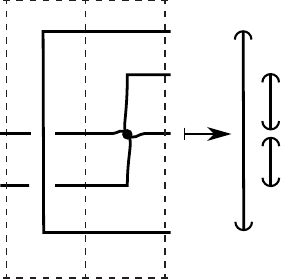}}}\qquad\qquad
\vcenter{\hbox{\includegraphics[scale=0.7]{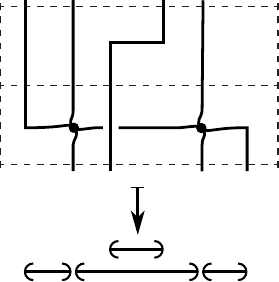}}}
\]
\caption{Non-interleaving contiguous columns and rows}
\label{fig:noninterleaving}
\end{figure}

\begin{figure}[ht]
\[
\vcenter{\hbox{\includegraphics[scale=0.7]{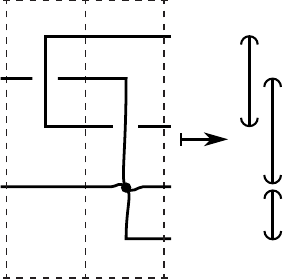}}}\qquad\qquad
\vcenter{\hbox{\includegraphics[scale=0.7]{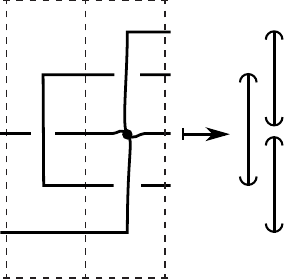}}}\qquad\qquad
\vcenter{\hbox{\includegraphics[scale=0.7]{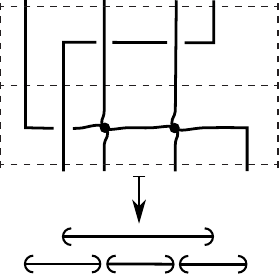}}}
\]
\caption{Interleaving contiguous columns and rows}
\label{fig:interleaving}
\end{figure}

\begin{defn}[Commutations]
Let $G=(\cdots\ c\ c'\cdots)=(\cdots\ r\ r'\cdots)^t\in\SG$ be given by a square matrix of oriented corners and singular tiles as mentioned in Remark~\ref{rmk:matrix_SG}. 
Suppose that $c$ and $c'$ or $r$ and $r'$ are non-interleaving columns or rows, respectively.
Then {\em horizontal commutation $(Cm_H)$} and {\em vertical commutation $(Cm_V)$} on these columns and rows are defined as follows. 
\[
\xymatrix@C=3pc@R=0pc{
(\cdots\ c\ c'\ \cdots)\ar@{<->}[r]^-{(Cm_H)}&(\cdots\ c'\ c\ \cdots)&
\mbox{$
\left(\begin{matrix}
\vdots\\
r\\
r'\\
\vdots
\end{matrix}\right)$
}
\ar@{<->}[r]^-{(Cm_V)}&
\mbox{$
\left(\begin{matrix}
\vdots\\
r'\\
r\\
\vdots
\end{matrix}\right)$
}\\
\vcenter{\hbox{\includegraphics[scale=0.8]{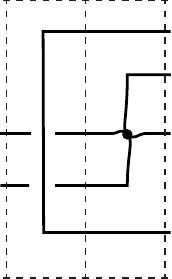}}}\ar@{<->}[r]^-{(Cm_H)}&
\vcenter{\hbox{\includegraphics[scale=0.8]{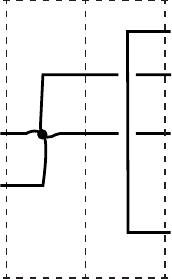}}}&
\vcenter{\hbox{\includegraphics[scale=0.8]{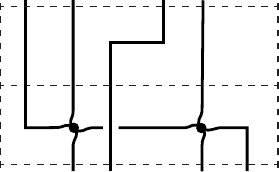}}}\ar@{<->}[r]^-{(Cm_V)}&
\vcenter{\hbox{\includegraphics[scale=0.8]{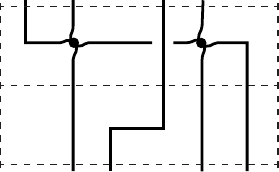}}}
}
\]
\end{defn}

\subsubsection{(De)Stabilizations}
There are several definitions of stabilizations in $\G$ which are equivalent (possibly up to translations and commutations), but the essential part is to make two more corners which are connected by a short horizontal or vertical segment. Then by applying translations or commutations, one can move the stabilized segment freely on the component where it belongs to. By this reason, one can reduce (de)stabilizations as only four cases depicted in Figure~\ref{fig:stabil}.

However, this is not always possible in $\SG$ since a stabilized segment can not move further when it meets a singular point.
Therefore we need more moves to be able to handle all possible cases as follows.

\begin{defn}[(De)Stabilizations] 
There are four types of (de)stabilizations as depicted in Figure~\ref{fig:stabilizations}.
\end{defn}

\begin{figure}[ht]

\begin{align*}
\xymatrix@C=2.5pc@R=0pc{
\vcenter{\hbox{\includegraphics[scale=0.7]{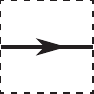}}}\quad\ar@<.5ex>[r]^-{(SE)}&
\quad\vcenter{\hbox{\includegraphics[scale=0.7]{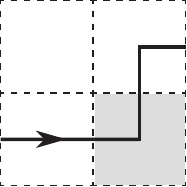}}}\ar@<.5ex>[l]^-{(SE)^{-1}} \qquad \qquad
\vcenter{\hbox{\includegraphics[scale=0.7]{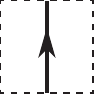}}}\quad\ar@<.5ex>[r]^-{(SE)}&
\quad\vcenter{\hbox{\includegraphics[scale=0.7]{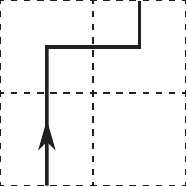}}}\ar@<.5ex>[l]^-{(SE)^{-1}} \\
\vcenter{\hbox{\includegraphics[scale=0.7]{grid_tile_DL.pdf}}}\quad\ar@<.5ex>[r]^-{(SE)}&
\quad\vcenter{\hbox{\includegraphics[scale=0.7]{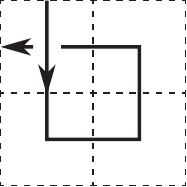}}}\ar@<.5ex>[l]^-{(SE)^{-1}} \qquad \qquad
\vcenter{\hbox{\includegraphics[scale=0.7]{grid_tile_LD.pdf}}}\quad\ar@<.5ex>[r]^-{(SE)}&
\quad\vcenter{\hbox{\includegraphics[scale=0.7]{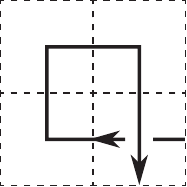}}}\ar@<.5ex>[l]^-{(SE)^{-1}} 
}
\end{align*}
\begin{align*}
\xymatrix@C=2.5pc@R=0pc{
\vcenter{\hbox{\includegraphics[scale=0.7]{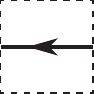}}}\quad\ar@<.5ex>[r]^-{(SW)}&
\quad\vcenter{\hbox{\includegraphics[scale=0.7]{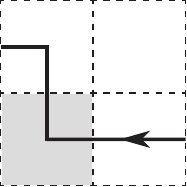}}}\ar@<.5ex>[l]^-{(SW)^{-1}} \qquad \qquad
\vcenter{\hbox{\includegraphics[scale=0.7]{grid_tile_U.pdf}}}\quad\ar@<.5ex>[r]^-{(SW)}&
\quad\vcenter{\hbox{\includegraphics[scale=0.7]{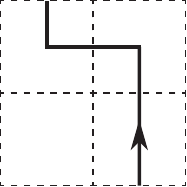}}}\ar@<.5ex>[l]^-{(SW)^{-1}} \\
\vcenter{\hbox{\includegraphics[scale=0.7]{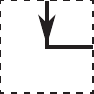}}}\quad\ar@<.5ex>[r]^-{(SW)}&
\quad\vcenter{\hbox{\includegraphics[scale=0.7]{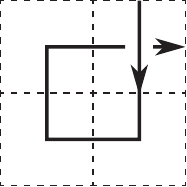}}}\ar@<.5ex>[l]^-{(SW)^{-1}} \qquad \qquad
\vcenter{\hbox{\includegraphics[scale=0.7]{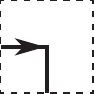}}}\quad\ar@<.5ex>[r]^-{(SW)}&
\quad\vcenter{\hbox{\includegraphics[scale=0.7]{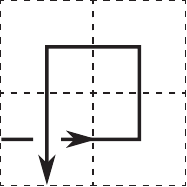}}}\ar@<.5ex>[l]^-{(SW)^{-1}} 
}
\end{align*}
\begin{align*}
\xymatrix@C=2.5pc@R=0pc{
\vcenter{\hbox{\includegraphics[scale=0.7]{grid_tile_L.pdf}}}\quad\ar@<.5ex>[r]^-{(NW)}&
\quad\vcenter{\hbox{\includegraphics[scale=0.7]{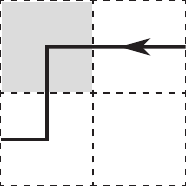}}}\ar@<.5ex>[l]^-{(NW)^{-1}} \qquad \qquad
\vcenter{\hbox{\includegraphics[scale=0.7]{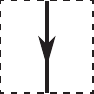}}}\quad\ar@<.5ex>[r]^-{(NW)}&
\quad\vcenter{\hbox{\includegraphics[scale=0.7]{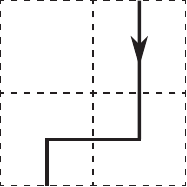}}}\ar@<.5ex>[l]^-{(NW)^{-1}} \\
\vcenter{\hbox{\includegraphics[scale=0.7]{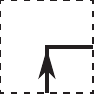}}}\quad\ar@<.5ex>[r]^-{(NW)}&
\quad\vcenter{\hbox{\includegraphics[scale=0.7]{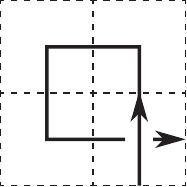}}}\ar@<.5ex>[l]^-{(NW)^{-1}} \qquad \qquad
\vcenter{\hbox{\includegraphics[scale=0.7]{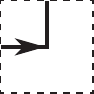}}}\quad\ar@<.5ex>[r]^-{(NW)}&
\quad\vcenter{\hbox{\includegraphics[scale=0.7]{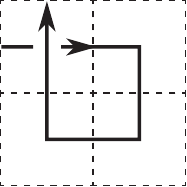}}}\ar@<.5ex>[l]^-{(NW)^{-1}} 
}
\end{align*}
\begin{align*}
\xymatrix@C=2.5pc@R=0pc{
\vcenter{\hbox{\includegraphics[scale=0.7]{grid_tile_R.pdf}}}\quad\ar@<.5ex>[r]^-{(NE)}&
\quad\vcenter{\hbox{\includegraphics[scale=0.7]{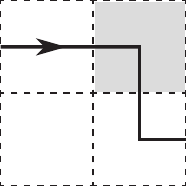}}}\ar@<.5ex>[l]^-{(NE)^{-1}} \qquad \qquad
\vcenter{\hbox{\includegraphics[scale=0.7]{grid_tile_D.pdf}}}\quad\ar@<.5ex>[r]^-{(NE)}&
\quad\vcenter{\hbox{\includegraphics[scale=0.7]{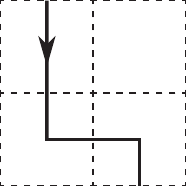}}}\ar@<.5ex>[l]^-{(NE)^{-1}} \\
\vcenter{\hbox{\includegraphics[scale=0.7]{grid_tile_UL.pdf}}}\quad\ar@<.5ex>[r]^-{(NE)}&
\quad\vcenter{\hbox{\includegraphics[scale=0.7]{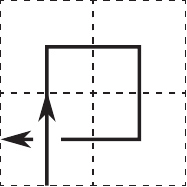}}}\ar@<.5ex>[l]^-{(NE)^{-1}} \qquad \qquad
\vcenter{\hbox{\includegraphics[scale=0.7]{grid_tile_LU.pdf}}}\quad\ar@<.5ex>[r]^-{(NE)}&
\quad\vcenter{\hbox{\includegraphics[scale=0.7]{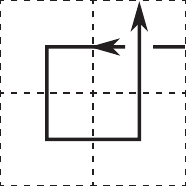}}}\ar@<.5ex>[l]^-{(NE)^{-1}} 
}
\end{align*}
\caption{The four types of (de)stabilizations on horizontal, vertical segments and corners}
\label{fig:stabilizations}
\end{figure}

Note that all stabilizations in Figure~\ref{fig:stabilizations} increase the size of the grid diagram by 1. 
As mentioned before, stabilizations on horizontal and vertical segments are redundant for nonsingular cases since every horizontal and vertical segments end at corners.
The (de)stabilization naming convention follows from the first stabilized corner for each horizontal segment, which is indicated as a shaded region and is the same as the convention that Ozsv\'ath, Szab\'o and Thurston used in \cite{OST}.

\begin{prop}\cite[Proposition~3]{NT}\label{prop:stabilizations}
We have the following correspondences.
\[
\begin{array}{cccl}
(SW)&\longleftrightarrow&(BrS_+)&\text{ Braid $(+)$-stabilization } \\
(NW)&\longleftrightarrow&(BrS_-)&\text{ Braid $(-)$-stabilization } \\
 (NW)&\longleftrightarrow&(LS_+)&\text{ Legendrian $(+)$-stabilization }\\
(SE)&\longleftrightarrow &(LS_-)&\text{ Legendrian $(-)$-stabilization }\\
(NW)&\longleftrightarrow&(TS) &\text{ Transverse stabilization }
\end{array}
\]
\end{prop}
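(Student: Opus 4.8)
The plan is to evaluate each local move listed in Figure~\ref{fig:stabilizations} under the three maps $\|\cdot\|_\SB$, $\|\cdot\|_\SL$ and $\|\cdot\|_\ST$ and to match the outcome with the corresponding stabilization on braids, Legendrian links or transverse links, working modulo the equivalences already known to hold there (conjugation, translations and commutations on $\SG$, and the Legendrian translations $(LTr_H)$, $(LTr_V)$ of Corollary~\ref{cor:LegTr}). The sub-pictures in which the stabilized piece is bounded by two corners and sits away from every singular tile are literally the moves treated in \cite[Proposition~3]{NT}, so for those the assertion is inherited verbatim; the remaining work is to (a) re-inspect the cases where the stabilized corner is adjacent to a singular tile $t_\bullet^*$, and (b) handle the genuinely new cases where the stabilization is performed on a horizontal or vertical segment rather than on a corner. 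Because every (de)stabilization is a \emph{local} replacement, it leaves unchanged the set of reversed horizontal segments and all orientation data outside a small neighbourhood, so in every case only a bounded local picture must be analysed.

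For the two Legendrian rows I would use the pictorial description of $\|\cdot\|_\SL$, namely Figure~\ref{fig:G2L} together with the rule sending $t_\bullet$ to a non-transverse crossing: rotate $45^\circ$ counterclockwise, smooth the north and south corners, and turn the east and west corners into cusps. A $(NW)$-stabilization inserts two new corners joined by a short edge, and after the rotation this edge together with its two new cusps is exactly a $(+)$-zigzag; a planar isotopy and, if needed, one Legendrian translation slides the zigzag back to where the original arc lay, so the move realises $(LS_+)$. Reading the same local configuration with the complementary pair of corners shows that $(SE)$ produces a $(-)$-zigzag, i.e. $(LS_-)$. One then checks that, for each of the four stabilization types, the corner sub-cases, the horizontal-segment sub-cases and the vertical-segment sub-cases all reduce after rotation to this single zigzag picture, including the sub-cases abutting a singular tile (using the orientations and resolutions fixed in Figure~\ref{fig:singularresolutions}).

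For the two braid rows I would use $\|\cdot\|_\SB = sl\circ(\cdot)^\vee$. Locality again says the flip is undisturbed outside a neighbourhood, so it suffices to flip the local picture and slant: a $(SW)$-stabilization, after flipping the reversed segment it creates and slanting, inserts one positive crossing of the two rightmost strands, which is a $(+)$-stabilization $(BrS_+)$ up to a conjugation; a $(NW)$-stabilization inserts $\sigma_n^{-1}$ in the same position, i.e. $(BrS_-)$. For the new segment sub-cases one invokes the flip rules $(\mathrm{II}^\vee)$--$(\mathrm{IV}^\vee)$ and the images of $t_\bullet$ under the flip recorded in Figure~\ref{fig:flipsingulartile} to confirm that the sign of the inserted generator is still dictated by whether the first (shaded) stabilized corner is of type $NW$ or of type $SW$. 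The transverse row then follows formally: by Lemma~\ref{lem:map_st}, $\|G\|_\ST = (\|G\|_\SL)^+$, so a $(NW)$-stabilization, having been identified above with $(LS_+)$ under $\|\cdot\|_\SL$, is sent by $\|\cdot\|_\ST$ to $(LS_+)^+ = (TS)$; equivalently it equals $\widehat{(BrS_-)}_\T = (TS)$ via the braid picture.

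The main obstacle is not conceptual but combinatorial: each stabilization type comes with several sub-pictures (horizontal segment, vertical segment, corner, in all orientations, plus the reflections), and one must verify that each reduces to the expected normal form, which for the braid map requires tracking how $(\cdot)^\vee$ behaves when a stabilized segment terminates at a singular tile. Once the bookkeeping convention is pinned down -- that the labels $NW$, $SW$, $SE$, $NE$ record the first stabilized corner, and that this corner's type simultaneously fixes the sign of $\sigma_n^{\pm 1}$ and the sign of the Legendrian zigzag -- every remaining case is a short diagram chase.
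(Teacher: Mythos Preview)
Your proposal is correct and consistent with the paper's approach; the paper's own proof is the single sentence ``The proof is straightforward from the definition.'' Your elaboration---rotating the local picture for $\|\cdot\|_\SL$, flipping-and-slanting for $\|\cdot\|_\SB$, and deducing the transverse row from $\|\cdot\|_\ST=(\|\cdot\|_\SL)^+$---is exactly the content that phrase is abbreviating, so there is no divergence in method.

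One remark on scope: your items (a) and (b), where you plan to re-inspect configurations adjacent to a singular tile and to treat the segment-based stabilizations via the flip rules $(\mathrm{II}^\vee)$--$(\mathrm{IV}^\vee)$, are more than the paper asks for. The paper's position (made explicit just after this proposition, in the remark following Proposition~\ref{prop:invariantunderstabilizations}) is that the (de)stabilization pictures in Figure~\ref{fig:stabilizations} contain no singular tile, so the local analysis is literally the nonsingular one from \cite{NT}; the presence of a singular tile elsewhere in the diagram does not affect what the flip or the $45^\circ$ rotation does inside the stabilized patch. Your extra bookkeeping is not wrong, but you can shorten the argument by invoking that locality once and then citing \cite[Proposition~3]{NT} for every sub-case.
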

\begin{proof}
The proof is straightforward from the definition.
\end{proof}

\subsubsection{Rotations, swirl and flype}
We introduce three more moves which change the direction of the singular tile $t_\bullet$ as follows.
\begin{defn}
The {\em positive and negative rotations} $(Rot_\pm)$, {\em swirl} $(Swirl)$ and {\em flype} $(Flype)$ are defined as depicted in Figure~\ref{fig:rot_sw_fl}.
\end{defn}

\begin{figure}[ht]
\begin{align*}
\xymatrix{
\vcenter{\hbox{\includegraphics[scale=0.6]{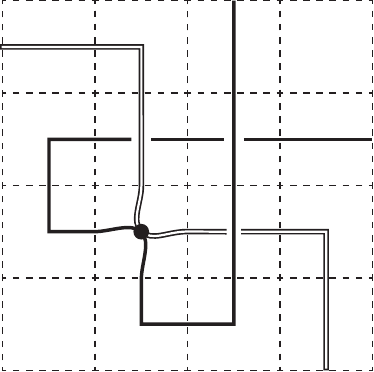}}}\quad&
\quad t_{\bullet}=\vcenter{\hbox{\includegraphics[scale=0.7]{grid_tile_Leg_singularcrossing.pdf}}}\quad\ar[r]^-{(Rot_-)}\ar[l]_-{(Rot_+)}&
\quad\vcenter{\hbox{\includegraphics[scale=0.6]{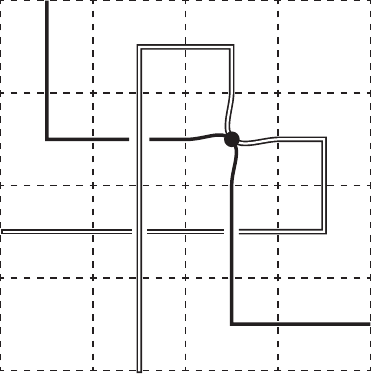}}}
}
\end{align*}
\begin{align*}
\xymatrix{
t_{\bullet}^W=\vcenter{\hbox{\includegraphics[scale=0.7]{grid_tile_Leg_singularcrossing_NW.pdf}}}\quad\ar[r]^-{(Swirl)}&
\quad\vcenter{\hbox{\includegraphics[scale=0.6]{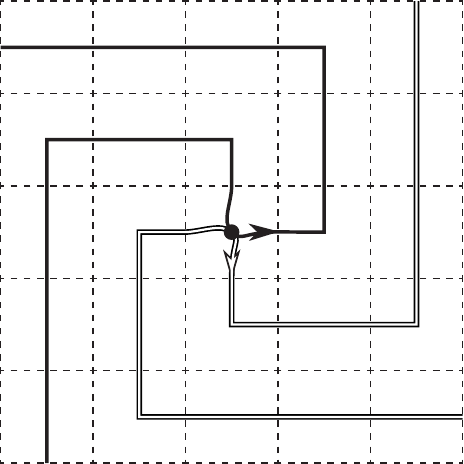}}}
}
\end{align*}
\begin{align*}
\xymatrix{
\vcenter{\hbox{\includegraphics[scale=0.7]{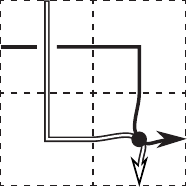}}}\quad\ar@{<->}[r]^{(Flype)}&
\quad\vcenter{\hbox{\includegraphics[scale=0.7]{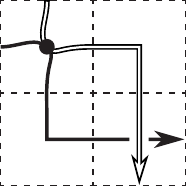}}}
}
\end{align*}
\caption{Rotations, swirl and flype}
\label{fig:rot_sw_fl}
\end{figure}

Two rotations $(Rot_+)$ and $(Rot_-)$ change the direction of the singular tile.
It can be easily verified by replacing $t_\bullet$ in Figure~\ref{fig:rot_sw_fl} with $t_\bullet^*$ for $*\in\{N,E,W,S\}$.
Especially, $(+)$ and $(-)$-rotations from $t_\bullet^S$ and $t_\bullet^N$ to $t_\bullet^E$ are denoted by $(Rot_+^*)$ and $(Rot_-^*)$, respectively.

Although $(Rot_\pm)$ do not depend on orientations given on $t_\bullet$, both $(Swirl)$ and $(Flype)$ depend on the orientation. That is, if orientations are not the same as given in Figure~\ref{fig:rot_sw_fl}, then neither $(Swirl)$ nor $(Flype)$ is applicable.
The diagram in Figure~\ref{fig:directions_of_t} summarizes the above discussion.

\begin{figure}[ht]
\[
\xymatrix@C=10pc@R=4pc{
t_\bullet^W\subset G_W\ar@<-.5ex>[r]_{(Rot_-)}\ar@<2ex>[d]_{(Rot_+)}\ar[rd]^*[@]=0+!D{\labelstyle(Swirl)} & 
G_N\supset t_\bullet^N\ar@<-3ex>[d]_{(Rot_-^*)} \ar@<-.5ex>[l]_{(Rot_+)}\\
t_\bullet^S\subset G_S\ar@<-3ex>[u]_{(Rot_-)}\ar@<-.5ex>[r]_{(Rot_+^*)} & 
G_E\supset t_\bullet^E\ar@<-.5ex>[l]_{(Rot_-)} \ar@<2ex>[u]_{(Rot_+)}\ar@<3ex>@(ld,rd)_{(Flype)}
}
\]
\caption{A way how moves $(Rot_\pm), (Swirl)$ and $(Flype)$ change the directions of $t_\bullet^*$}
\label{fig:directions_of_t}
\end{figure}
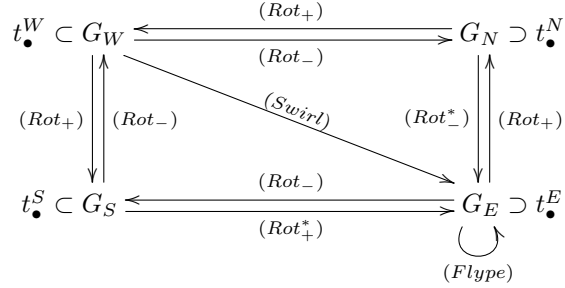

\begin{rmk}\label{rmk:rotations}
It is important to note that two rotations $(Rot_+)$ and $(Rot_-)$ are not inverse to each other.
Furthermore three moves $(Rot_+)^2$, $(Rot_-)^2$ and $(Swirl)$ are independent in the sense that any one of three moves can not be generated by others unless (de)stabilizations are allowed.
For example, a braid coming from $(Rot_+)^2(t_\bullet^W)$ or $(Rot_-)^2(t_\bullet^W)$ via $\|\cdot\|_\SB$ can be obtained from $(Swirl)(t_\bullet^W)$ with one positive braid stabilization $(BrS_+)$. 

However, all three coincide in $\ST$ since $(BrS_+)$ induces an equivalence in $\ST$.
\end{rmk}

\subsection{Invariances}
Now we consider the invariances of the maps $\|\cdot\|_\mathcal{SC}:\SG\to\mathcal{SC}$ for $\mathcal{C}\in\{\B,\L,\T,\K\}$ under (i) translations and commutations, (ii) (de)stabilizations, and (iii) rotations, swirl and flype.

\begin{prop}[Invariance under translations and commutations]\label{prop:basicmoves}
For any $\mathcal{C}\in\{\B,\L,\T,\K\}$, the map $\|\cdot\|_\mathcal{SC}$ is invariant under both translations and commutations.
\end{prop}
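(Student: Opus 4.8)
The plan is to establish the statement for $\mathcal{C}=\B$ and $\mathcal{C}=\L$ directly, and to deduce the cases $\mathcal{C}=\T,\K$ from the braid case. For the deduction, recall from the proof of Theorem~\ref{thm:extension} that $\|G\|_\ST=\widehat{(\|G\|_\SB)}_\T$ and $\|G\|_\SK=\widehat{\|G\|_\SB}$ for every $G\in\SG$, and that the closure maps $\widehat{(\cdot)}_\T:\SB\to\ST$ and $\widehat{(\cdot)}:\SB\to\SK$ are well defined by Theorems~\ref{thm:SBtoST} and \ref{thm:SBtoSK}. Hence, once we know that $\|G_1\|_\SB=\|G_2\|_\SB$ in $\SB$ whenever $G_1$ and $G_2$ differ by a translation or a commutation, the equalities $\|G_1\|_\ST=\|G_2\|_\ST$ and $\|G_1\|_\SK=\|G_2\|_\SK$ follow by applying these maps.

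For $\|\cdot\|_\SL$, I would first treat commutations: if $G_1\to G_2$ is a commutation $(Cm_H)$ (resp.\ $(Cm_V)$) of two non-interleaving contiguous columns $c,c'$ (resp.\ rows $r,r'$), then, since the associated systems of open intervals $I(c),I(c')$ (resp.\ $I(r),I(r')$) are non-interleaving, the two strands of the $45^\circ$-rotated front coming from $c,c'$ (resp.\ $r,r'$)—together with any singular tile they contain, which simply travels with them—can be slid past one another without creating or destroying a crossing, a cusp, or a singular point; thus $\|G_1\|_\SL$ and $\|G_2\|_\SL$ differ by a planar isotopy. For a horizontal translation $(Tr_H)$ on an admissible decomposition $(H_1^L|H_2^R)$, I would realize the passage from $\|G_1\|_\SL$ to $\|G_2\|_\SL$ as a finite sequence of the Legendrian translations $(LTr_H)$ of Corollary~\ref{cor:LegTr}, one for each segment of $H_1^L$ meeting the vertical seam: admissibility guarantees that every such segment ends at a corner, so after the rotation it appears either as a smooth strand or as one of the local pictures of $(LTr_H)$ (including its singular variant), never as one half of a $t_\bullet$ straddling the seam. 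The case $(Tr_V)\mapsto$ sequence of $(LTr_V)$-moves is symmetric. Each ingredient is an equivalence in $\SL$ by Corollary~\ref{cor:LegTr} and Proposition~\ref{prop:frontmove}, so $\|\cdot\|_\SL$ is invariant.

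For $\|\cdot\|_\SB$, recall $\|G\|_\SB=sl(G^\vee)$. A commutation of two non-interleaving columns (resp.\ rows) exchanges two vertical segments of $G^\vee$ whose supporting height-intervals are non-interleaving, so, after slanting, the corresponding sub-words of the braid differ by the defining relations $(Br1),(Br2)$ of $\mathbf{SB}_n$ together with a planar isotopy of the rectilinear diagram; hence $\|G_1\|_\SB=\|G_2\|_\SB$ already in $\mathbf{SB}$. A horizontal translation $(Tr_H)$ cyclically permutes the columns of $G$, i.e.\ the letters of the braid word read along the time direction of $G^\vee$, so $\|G_2\|_\SB$ is a conjugate of $\|G_1\|_\SB$ in $\mathbf{SB}$; a vertical translation $(Tr_V)$ cyclically permutes the strand heights, which amounts to conjugation by the cyclic-permutation braid $\delta_n$, and here the admissibility of the decomposition is exactly what ensures that this relabeling keeps every image of a singular tile assembled as a single generator $\tau_i$ or $\xi_i^{\pm1}$ (cf.\ Figure~\ref{fig:flipsingulartile}) rather than splitting it across the seam. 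In either case $\|G_1\|_\SB$ and $\|G_2\|_\SB$ differ by a conjugation $(BrC)$ and therefore coincide in $\SB=\mathbf{SB}/\{(BrC),(BrE)\}$.

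The main difficulty I anticipate is the case analysis at the singular tiles: one must verify, running through the four orientations $t_\bullet^N,t_\bullet^E,t_\bullet^W,t_\bullet^S$ and the local images in Figure~\ref{fig:flipsingulartile}, that the hypotheses ``non-interleaving'' and ``admissible'' are precisely the conditions under which the seam of a commutation or translation never cuts a $t_\bullet$ in a way that changes the target object, and---for $(Tr_V)$ in the braid case---that the cyclic relabeling of strand heights is genuinely a conjugation even when a long (non-adjacent) vertical segment or a singular tile sits across the cut. Everything not involving $t_\bullet$ is already contained in Ng--Thurston's proof of Proposition~\ref{prop:COSTNT}, so the genuinely new work is confined to these finitely many local configurations.
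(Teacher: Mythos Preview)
Your reduction to the cases $\mathcal{C}=\B$ and $\mathcal{C}=\L$ matches the paper's approach, as does your treatment of translations in both cases via conjugation $(BrC)$ and the Legendrian translations $(LTr_H),(LTr_V)$. However, your handling of commutations contains a genuine gap in the braid case and an inaccuracy in the Legendrian case.

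For $\|\cdot\|_\SB$, you assert that a vertical commutation $(Cm_V)$ of two non-interleaving rows yields braids that agree already in $\mathbf{SB}$, via the relations $(Br1),(Br2)$ and a planar isotopy. This fails when one of the rows contains a \emph{backward} horizontal segment (oriented right to left). Under the flip $(\cdot)^\vee$, such a segment is sent to a strand that wraps around the closure; commuting its row with the adjacent one changes which strand wraps over and which under, and this is precisely an exchange move $(BrE)$, not a braid relation. Thus $\|G_1\|_\SB$ and $\|G_2\|_\SB$ need not agree in $\mathbf{SB}$ but only in $\SB=\mathbf{SB}/\{(BrC),(BrE)\}$. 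The paper's proof makes this explicit (following \cite[Proposition~7]{NT}) and, in the singular setting, shows that when a singular point lies on the row one needs a \emph{sequence} of exchange moves together with the relations $(Br2)$ and $(BrF)$; see Figure~\ref{fig:Cm_V}. Your proposal does not account for this mechanism.

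For $\|\cdot\|_\SL$, your claim that a commutation of non-interleaving columns or rows is realized by a planar isotopy of the front, ``without creating or destroying a crossing'', is too strong. When the two intervals are nested rather than disjoint, crossings \emph{are} created and destroyed; the resulting change of front is not a planar isotopy but one of the Legendrian Reidemeister moves $(LRM2)$, $(LRM3)$, or $(LRM5)$, as the paper notes. This is easily repaired, but as written the argument is incorrect.
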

\begin{proof}
Since $\|\cdot\|_\ST$ and $\|\cdot\|_\SK$ factor through both $\|\cdot\|_\SB$ and $\|\cdot\|_\SL$, it suffices to prove the invariances for $\|\cdot\|_\SB$ and $\|\cdot\|_\SL$ under translations and commutations.

Notice that under the map $\|\cdot\|_\SL$, both commutations $(Cm_H)$ and $(Cm_V)$ are one of Legendrian Reidemeister moves $(LRM2)$, $(LRM3)$ or $(LRM5)$, and both translations $(Tr_H)$ and $(Tr_V)$ can be expressed as compositions of Legendrian translations $(LTr_H)$ and $(LTr_V)$ defined in Corollary~\ref{cor:LegTr} up to planar isotopy.
Therefore $\|\cdot\|_\SL$ is invariant under translations and commutations.

For $\SB$, by regarding a grid diagram as drawn on a torus, it is straightforward that if $G'\in\SG$ is obtained from $G\in\SG$ by $(Tr_H)$ and $(Tr_V)$, then $\|G\|_{\SB}$ and $\|G'\|_{\SB}$ are the same up to conjugacy $(BrC)$. 
See Figure~\ref{fig:HTransBraid} for $(Tr_H)$ and Figure~\ref{fig:VTransBraid} for $(Tr_V)$.
Moreover, horizontal commutation $(Cm_H)$ yields braid isotopy and it defines the same braids in $\mathbf{SB}$. Therefore the only case we concern is vertical commutation $(Cm_V)$.

\begin{figure}[ht]
\[
\xymatrix@R=2pc{
\vcenter{\hbox{\includegraphics[scale=0.7]{Translation_H_1.pdf}}}\ar@{<->}[d]_{(Tr_H)}\quad\ar@{|->}[r]^-{(\cdot)^\vee}& \quad\vcenter{\hbox{\includegraphics[scale=0.7]{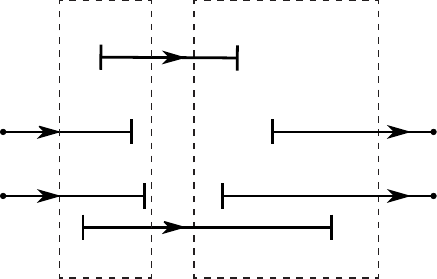}}} \ar@{|->}[r]^-{sl}& \beta_1\beta_2\ar@{<->}[d]^{(BrC)}\\
\vcenter{\hbox{\includegraphics[scale=0.7]{Translation_H_2.pdf}}}\quad\ar@{|->}[r]^-{(\cdot)^\vee}& \quad\vcenter{\hbox{\includegraphics[scale=0.7]{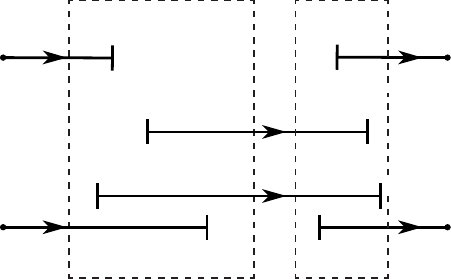}}} \ar@{|->}[r]^-{sl}&\beta_2\beta_1
}
\]
\caption{A horizontal translation $(Tr_H)$ under $\|\cdot\|_\SB=sl\circ (\cdot)^\vee$}
\label{fig:HTransBraid}
\end{figure}
\begin{figure}[ht]
\[
\xymatrix@C=2pc@R=1pc{
\vcenter{\hbox{\includegraphics[scale=0.7]{Translation_V_1.pdf}}}\ar@{<->}[dd]_{(Tr_V)}\quad\ar@{|->}[r]^-{(\cdot)^\vee}&\quad\vcenter{\hbox{\includegraphics[scale=0.7]{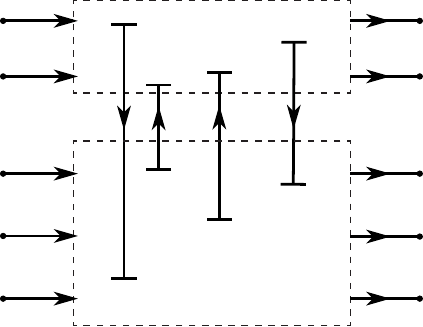}}}\quad\ar@{|->}[rr]^-{sl} & &\beta\ar@{<->}[d]^{(BrC)}\\
& \hspace{-1cm}\qquad\qquad\vcenter{\hbox{\includegraphics[scale=0.7]{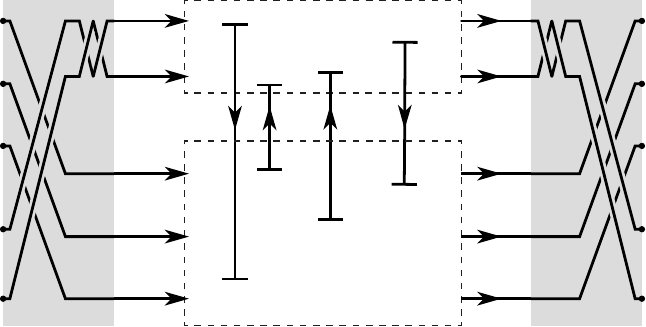}}}\quad\ar@{=}[d]\ar@{|->}[rr]^-{sl}& &
\gamma^{-1}\beta\gamma\ar@{=}[d]\\
\vcenter{\hbox{\includegraphics[scale=0.7]{Translation_V_2.pdf}}}\quad\ar@{|->}[r]^-{(\cdot)^\vee}& \quad\vcenter{\hbox{\includegraphics[scale=0.7]{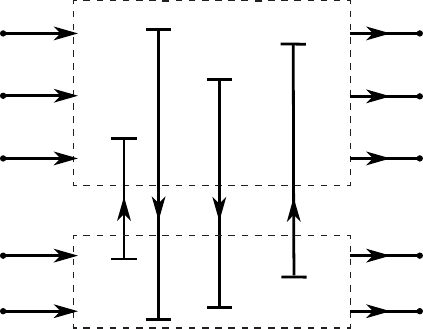}}}\quad\ar@{|->}[rr]^-{sl}&  &\beta'
}
\]
\caption{A vertical translation $(Tr_V)$ under $\|\cdot\|_\SB=sl\circ (\cdot)^\vee$}
\label{fig:VTransBraid}
\end{figure}

Let $G\in\SG$ and let $r$ and $r'$ be the non-interleaving contiguous rows of $G$. Then by commutation of $r$ and $r'$, we obtain $G'\in \SG$.
If none of $r$ and $r'$ contain backward horizontal segments, then the vertical commutation defines the same braids in $\mathbf{SB}$ and so $\|G\|_\SB=\|G'\|_\SB\in\SB$.

On the other hand, if one of $r$ and $r'$ contains backward horizontal segments, this is the case when the exchange move $(BrE)$ occurs. This is essentially the same as described in \cite[Proposition~7]{NT} but possibly the sequence of exchange moves and braid isotopies are needed when $r$ or $r'$ contains singular points as Figure~\ref{fig:Cm_V}.

The other cases are essentially same as above pictures and the detail will be omitted.
\end{proof}

\begin{figure}[ht]
\[
\xymatrix{
\vcenter{\hbox{\includegraphics[scale=0.8]{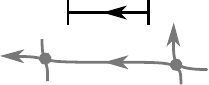}}}\ar@{<->}[dd]_{(Cm_V)}\ar@{|->}[r]^-{(\cdot)^\vee} &
\vcenter{\hbox{\includegraphics[scale=0.8]{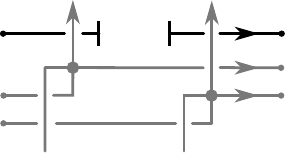}}}\ar@{<->}[r]^{(BrF)}&
\vcenter{\hbox{\includegraphics[scale=0.8]{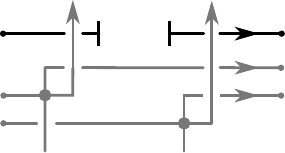}}}\ar@{<->}[r]^{(BrE)}&
\vcenter{\hbox{\includegraphics[scale=0.8]{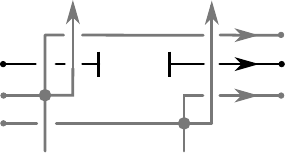}}}\ar@{<->}[d]^{(BrF)}_{(Br2)}\\
& & & \vcenter{\hbox{\includegraphics[scale=0.8]{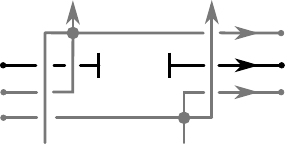}}}\ar@{<->}[d]^{(BrE)}\\
\vcenter{\hbox{\includegraphics[scale=0.8]{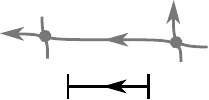}}}\ar@{|->}[r]^-{(\cdot)^\vee} & 
\vcenter{\hbox{\includegraphics[scale=0.8]{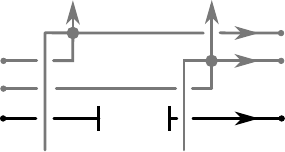}}}\ar@{<->}[r]^{(BrE)}&
\vcenter{\hbox{\includegraphics[scale=0.8]{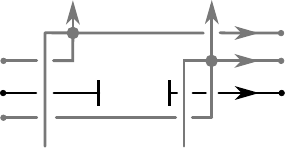}}}\ar@{<->}[r]^{(Br2)}_{(BrF)}&
\vcenter{\hbox{\includegraphics[scale=0.8]{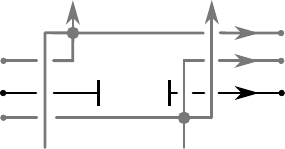}}}\\
}
\]
\caption{A vertical commutation $(Cm_V)$ under $\|\cdot\|_\SB$}
\label{fig:Cm_V}
\end{figure}

\begin{prop}[Invariance under (de)stabilizations]\label{prop:invariantunderstabilizations}
The following holds.
\begin{enumerate}
\item $\|\cdot\|_\SB$ is invariant under $\{(NE), (SE)\}$.
\item $\|\cdot\|_\SL$ is invariant under $\{(NE), (SW)\}$.
\item $\|\cdot\|_\ST$ is invariant under $\{(NE), (SE), (SW)\}$.
\item $\|\cdot\|_\SK$ is invariant under $\{(NE), (NW), (SE), (SW)\}$.
\end{enumerate}
\end{prop}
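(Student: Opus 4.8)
The plan is to prove (1) and (2) directly and then deduce (3) and (4) by composing with $(\cdot)^+$ and $\|\cdot\|$. Recall from Lemma~\ref{lem:map_st} and the commutativity established in Theorem~\ref{thm:extension} that $\|\cdot\|_\ST=(\cdot)^+\circ\|\cdot\|_\SL$ and $\|\cdot\|_\SK=\|\cdot\|\circ\|\cdot\|_\SL$. Granting (2), a $(NE)$- or $(SW)$-(de)stabilization $G\rightsquigarrow G'$ gives $\|G\|_\SL=\|G'\|_\SL$ in $\SL$, hence equality also in $\ST$ and in $\SK$ after applying $(\cdot)^+$ or $\|\cdot\|$. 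For the remaining moves I would use Proposition~\ref{prop:stabilizations} to identify a corner $(SE)$-stabilization with $(LS_-)$ and a corner $(NW)$-stabilization with $(LS_+)$ (and check in the case analysis below that the same holds, up to Legendrian Reidemeister moves, for the segment versions at a singular tile); then Theorem~\ref{thm:SLST} says $(LS_-)$ becomes trivial in $\ST$ under $(\cdot)^+$, and Proposition~\ref{prop:SLSK} says both $(LS_\pm)$ become trivial in $\SK$ under $\|\cdot\|$. This yields (3) and (4). Alternatively (4) follows at once, since $\|G\|_\SK$ is the underlying link of the grid diagram, the singular tiles are left untouched by any (de)stabilization, and a grid (de)stabilization of either kind does not change the underlying link type, in the sense of Cromwell~\cite{C}.

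It therefore suffices to treat (1) and (2). For each move of Figure~\ref{fig:stabilizations} I would distinguish two cases. In the first, the local picture of the (de)stabilization contains neither a singular tile nor a reversed horizontal segment of type (II)--(IV); this covers in particular all four corner (de)stabilizations occurring in the nonsingular theory. There $\|\cdot\|_\SB$ and $\|\cdot\|_\SL$ restrict to the nonsingular maps of Ng--Thurston~\cite{NT} and Ozsv\'ath--Szab\'o--Thurston~\cite{OST}, so invariance of $\|\cdot\|_\SB$ under $\{(NE),(SE)\}$ and of $\|\cdot\|_\SL$ under $\{(NE),(SW)\}$ is exactly Proposition~\ref{prop:COSTNT}. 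In the second case the (de)stabilization is performed on a horizontal or vertical segment terminating at a singular tile $t_\bullet^*$, equivalently it interacts with a type (II)--(IV) reversed segment; this is the genuinely new content. Since $\|\cdot\|_\SL$ is defined tile by tile via Figure~\ref{fig:G2L} and the flip $(\cdot)^\vee$ acts locally on reversed segments, the whole effect of such a move on $\|G\|_\SL$ and on $\|G\|_\SB$ is confined to a neighbourhood of the singular tile, so only finitely many local configurations need to be examined.

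These configurations I would check one by one, organized by the orientation $*\in\{N,E,W,S\}$ of $t_\bullet^*$, in the spirit of the proof of Proposition~\ref{prop:stabilizations}. Under $\|\cdot\|_\SL$ one rotates by $45^\circ$, turns corners into cusps or smoothings and draws $t_\bullet^*$ as the dotted non-transverse crossing; the zigzag created by a $(NE)$- or $(SW)$-stabilization then sits next to a Legendrian singular point and is undone by a single $(LRM1)$, while the zigzag created by an $(SE)$- or $(NW)$-stabilization is exactly an $(LS_-)$- or $(LS_+)$-move, as the reductions of (3) and (4) require. Under $\|\cdot\|_\SB=sl\circ(\cdot)^\vee$ one uses the flip pictures of Figure~\ref{fig:flipsingulartile} together with the flip rules $(\mathrm{II}^\vee_*)$--$(\mathrm{IV}^\vee_*)$, and recognizes the change of braid word as a braid isotopy, a conjugation $(BrC)$, or an exchange move $(BrE)$, by the same mechanism as in Figure~\ref{fig:Cm_V}. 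One also checks that each stabilized pseudo grid diagram is again a genuine singular grid diagram, which is automatic because a stabilization introduces one new row and one new column, each carrying exactly two corners.

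The hard part will be the bookkeeping for $\|\cdot\|_\SB$ in this second case: when the segment being stabilized is a reversed segment of type (II), (III) or (IV), flipping it produces corners immediately adjacent to the flipped singular tile whose images do not simply cancel, and one must verify in every orientation that the resulting braid word differs from the original only by the relations $(Br1)$, $(Br2)$, $(BrF)$, conjugation $(BrC)$ and the exchange move $(BrE)$ — possibly through a short sequence of such moves, as in Figure~\ref{fig:Cm_V}, rather than a single one. The remaining cases, and all of the $\|\cdot\|_\SL$ computations, are routine once the local pictures are drawn.
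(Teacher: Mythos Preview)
Your proposal is correct, but considerably more elaborate than the paper's proof. The paper dispatches the whole proposition in one sentence: none of the (de)stabilization moves in Figure~\ref{fig:stabilizations} touches a singular tile---each replaces a non-singular tile (an oriented horizontal or vertical segment, or a corner) by a small non-singular pseudo grid diagram---so the local effect on every one of the maps $\|\cdot\|_{\mathcal{SC}}$ is literally identical to the nonsingular situation already handled in \cite{C,NT,OST}, and the proof is omitted.

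Your ``second case'', where the segment being stabilized terminates at a singular tile $t_\bullet^*$, therefore does not require a separate analysis. The map $\|\cdot\|_\SL$ is defined tile by tile, and the tile $t_\bullet^*$ is unchanged by the move; the flip $(\cdot)^\vee$ is determined by what happens at each endpoint of a reversed horizontal segment, and the stabilization alters neither the singular endpoint nor its flip rule $(\mathrm{II}^\vee_*)$--$(\mathrm{IV}^\vee_*)$, introducing only new \emph{corner} endpoints that are governed by the type~$(\mathrm{I}^\vee)$ rule exactly as in $\G$. So the ``hard part'' you anticipate---tracking braid words through type (II)--(IV) flips---never materializes. Your deduction of (3) and (4) from (2) via Proposition~\ref{prop:stabilizations}, Theorem~\ref{thm:SLST} and Proposition~\ref{prop:SLSK} is a perfectly valid alternative organization, but once one observes that the local picture is the nonsingular one, all four items follow simultaneously from Proposition~\ref{prop:COSTNT} without this extra step.
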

This is exactly the same as nonsingular cases considered in \cite{C, NT, OST} since these stabilizations do not involve any singular tile, and so we omit the proof.


\begin{prop}[Invariance under rotations, swirl and flype]\label{prop:rotations}The following holds.
\begin{enumerate}
\item $\|\cdot\|_\SB$ is invariant under $(Flype)$, $(Swirl)$ and $(Rot_\pm^*)$.
\item $\|\cdot\|_\SL$ is invariant under $(Rot_\pm)$.
\item $\|\cdot\|_\ST$ and $\|\cdot\|_\SK$ are invariant under $(Flype)$, $(Swirl)$ and $(Rot_\pm)$.
\end{enumerate}
\end{prop}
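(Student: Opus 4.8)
The plan is to reduce to the two maps $\|\cdot\|_\SB$ and $\|\cdot\|_\SL$, exactly as at the start of the proof of Proposition~\ref{prop:basicmoves}: since $\|\cdot\|_\ST=\hat{(\|\cdot\|_\SB)}_\T$ and $\|\cdot\|_\SK=\widehat{\|\cdot\|_\SB}=\|\,\|\cdot\|_\SL\,\|$, any invariance established for $\|\cdot\|_\SB$ or for $\|\cdot\|_\SL$ automatically propagates through the transverse closure and the forgetful map. So I would first prove statements (1) and (2) directly, then read off (3).

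\textbf{Invariance of $\|\cdot\|_\SB$.} I would argue entirely at the level of braid words, using $\|\cdot\|_\SB=sl\circ(\cdot)^\vee$ and the local flip images of $t_\bullet^*$ recorded in Figure~\ref{fig:flipsingulartile}. Each of $(Flype)$, $(Swirl)$ and $(Rot_\pm^*)$ is supported in a bounded block of the grid, so applying $(\cdot)^\vee$ and then $sl$ to the two sides of the move yields two elements of $\mathbf{SB}$, and it suffices to check that they agree modulo conjugation $(BrC)$. I expect $(Flype)$ to turn into one of the mixed braid relations $(Br2)$ or $(BrF)$, being the grid incarnation of sliding the singular generator $\xi_i$ past an adjacent crossing $\sigma_i^{\pm1}$; $(Swirl)$ applied to $t_\bullet^W$ to produce a word differing from $\|t_\bullet^W\|_\SB$ by a conjugation whose conjugating factor is read off from the loop created by the swirl; and $(Rot_+^*)$ sending $t_\bullet^S$ to $t_\bullet^E$, together with $(Rot_-^*)$ sending $t_\bullet^N$ to $t_\bullet^E$, to become $(BrF)$-type identities up to conjugation. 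It is essential to invoke only these three flavours: by Remark~\ref{rmk:rotations} the \emph{general} $(Rot_\pm)$ is genuinely not a braid identity, and the bookkeeping for $(Swirl)$ must be done so that no extra $\sigma_n^{\pm1}$, i.e.\ no unwanted positive stabilization $(BrS_+)$, slips in.

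\textbf{Invariance of $\|\cdot\|_\SL$ under $(Rot_\pm)$, and deduction of (3).} This is a picture-chase through Figure~\ref{fig:G2L} and the rule $\|t_\bullet^*\|_\SL=\pi_F(L_\bullet^*)$ (Figure~\ref{fig:singularpoints}). A single $(Rot_\pm)$ is a $45^\circ$ turn of the singular tile together with an adjustment of the neighbouring corner tiles; after $\|\cdot\|_\SL$ this becomes a short slide of the singular point of the front past a cusp or corner, realized principally by $(LRM4)$ (equivalently $(LRM6)$ up to planar isotopy, cf.\ the remark after Proposition~\ref{prop:frontmove}), with $(LRM5)$ when a strand is crossed and $(LRM1)$ when a kink appears, followed by the Legendrian translations $(LTr_H)$, $(LTr_V)$ of Corollary~\ref{cor:LegTr} to return the diagram to grid form. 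I would carry this out for the four occurrences of $(Rot_+)$ in Figure~\ref{fig:directions_of_t}, the $(Rot_-)$ cases being the reverses, cutting down the work by the reflection symmetries in Figure~\ref{fig:frontmove}. Granting (1) and (2), statement (3) is immediate: $(Flype)$ and $(Swirl)$ are invariant for $\|\cdot\|_\SB$, hence for $\|\cdot\|_\ST=\hat{(\|\cdot\|_\SB)}_\T$ and $\|\cdot\|_\SK=\widehat{\|\cdot\|_\SB}$; and by Lemma~\ref{lem:map_st} one has $\|G\|_\ST=(\|G\|_\SL)^+$ and $\|G\|_\SK=\|\,\|G\|_\SL\,\|$, so invariance of $\|\cdot\|_\SL$ under $(Rot_\pm)$ forces the same for $\|\cdot\|_\ST$ and $\|\cdot\|_\SK$.

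\textbf{Main obstacle.} The delicate step is the Legendrian picture-chase for $(Rot_\pm)$: one must verify that each rotation expands using only planar isotopy, the translations $(LTr_H)$, $(LTr_V)$, and the moves $(LRM1)$, $(LRM4)$, $(LRM5)$, $(LRM6)$ — and, crucially, \emph{without} ever invoking a Legendrian flype $(LF)$ or a destabilization, since either would destroy invariance in $\SL$ and would collapse exactly the distinction between $(Rot_\pm)$ (legal in $\SL$) and $(Flype)$, $(Swirl)$ (legal only after passing to $\SK$). A secondary nuisance is the orientation bookkeeping for $(Swirl)$ in Step (1), where one has to be sure the flipped picture is exactly a conjugation and not a conjugation composed with a positive stabilization $(BrS_+)$.
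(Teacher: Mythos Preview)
Your approach is essentially the paper's: check $(1)$ and $(2)$ locally via the flip pictures and Legendrian Reidemeister moves, then deduce $(3)$ from the factorizations $\|\cdot\|_\ST=\hat{(\|\cdot\|_\SB)}_\T=(\|\cdot\|_\SL)^+$ and $\|\cdot\|_\SK=\|\,\|\cdot\|_\ST\,\|$. You are, however, anticipating more work than is actually needed: the paper's computation shows that the flips $\bigl((Rot_+^*)(t_\bullet^S)\bigr)^\vee$, $\bigl((Rot_-^*)(t_\bullet^N)\bigr)^\vee$, and $\bigl((Swirl)(t_\bullet^W)\bigr)^\vee$ are \emph{literally equal} to $(t_\bullet^S)^\vee$, $(t_\bullet^N)^\vee$, $(t_\bullet^W)^\vee$ respectively (no conjugation or $(BrF)$ required), and for $(2)$ a single application of $(LRM6)$ suffices --- so your ``main obstacle'' dissolves and none of $(LRM1)$, $(LRM5)$, or the translations $(LTr_H)$, $(LTr_V)$ are needed.
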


As mentioned in Remark~\ref{rmk:rotations}, the move $(Swirl)$ is the same as $(Rot_\pm)^2$ in $\ST$ and therefore in $\SK$ as well
\begin{proof}
\noindent (1) At first, we need to check that the flips $\left((Rot_+^*)(t_\bullet^S)\right)^\vee$, $\left((Rot_-^*)(t_\bullet^N)\right)^\vee$ and $\left((Swirl)(t_\bullet^W)\right)^\vee$.
\[
\left((Rot_+^*)(t_\bullet^S)\right)^\vee=\vcenter{\hbox{\includegraphics{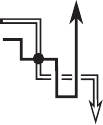}}}=\vcenter{\hbox{\includegraphics{flip_SW.pdf}}}=(t_\bullet^S)^\vee\]
\[
\left((Rot_-^*)(t_\bullet^N)\right)^\vee=\vcenter{\hbox{\includegraphics{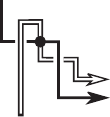}}}=\vcenter{\hbox{\includegraphics{flip_NE.pdf}}}=(t_\bullet^N)^\vee\]
\[
\left((Swirl)(t_\bullet^W)\right)^\vee=\vcenter{\hbox{\includegraphics{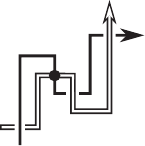}}}=\vcenter{\hbox{\includegraphics{flip_NW.pdf}}}=(t_\bullet^W)^\vee
\]

Since it is obvious that $\|\cdot\|_\SB$ is invariant under $(Flype)$, and so we are done.

\noindent (2) It is not hard to see that $(Rot_+)$ and $(Rot_-)$ are inverses to each other in $\SL$ and correspond to the singular Legendrian Reidemeister move $(LRM6)$.

\noindent (3) Since $\hat{(\|\cdot\|_\SB)}_\T=(\|\cdot\|_\SL)^+$, the map $\|\cdot\|_\ST$ is invariant under $(Flype), (Swirl)$ and $(Rot_\pm)$. 

For $\|\cdot\|_\SK$, this is obvious since $\|\cdot\|_\SK = \big\|\|\cdot\|_\ST\big\|$.
\end{proof}

\subsection{Proof of main theorem}

From now on, we denote $\SG$ modulo translations and commutations by $\tilde\SG$
\[
\tilde\SG=\SG/\{(Tr_H), (Tr_V),(Cm_H),(Cm_V)\},
\]
and for each $\mathcal{C}\in\{\B, \L, \T, \K\}$, we regard the map $\|\cdot\|_{\SC}$ as defined on $\tilde\SG$.

\begin{prop}\label{prop:SGSB}
The map $\|\cdot\|_\SB$ induces the bijection
\[
\|\cdot\|_\SB:\tilde\SG/\{(NE), (SE), (Flype), (Swirl), (Rot_\pm^*)\}\to\SB.
\]
\end{prop}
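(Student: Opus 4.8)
The plan is to adapt the Ng--Thurston proof of the nonsingular bijection $\B\longleftrightarrow\tilde\G/\{(NE),(SE)\}$ \cite{NT}, adding the bookkeeping that the singular tile $t_\bullet$ and the moves $(Flype)$, $(Swirl)$, $(Rot_\pm^*)$ require.

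\emph{Well-definedness and surjectivity.} By Proposition~\ref{prop:basicmoves} the map $\|\cdot\|_\SB$ already factors through $\tilde\SG$, and by Propositions~\ref{prop:invariantunderstabilizations}(1) and \ref{prop:rotations}(1) it is invariant under $(NE)$, $(SE)$, $(Flype)$, $(Swirl)$ and $(Rot_\pm^*)$; hence it descends to the quotient in the statement. For surjectivity, given $\beta\in\SB$ I would represent it by a word in the $\sigma_i^{\pm1}$ and $\xi_i$, concatenate the rectilinear pieces $\bar\sigma_i^{\pm1},\bar\xi_i$ to obtain $\bar\beta\in\bar{\mathbf{SB}}$ with $sl(\bar\beta)=\beta$, and close the strands of $\bar\beta$ up through fresh rows and columns; the result is a grid $G\in\SG$ with $G^\vee=\bar\beta$, so $\|G\|_\SB=\beta$. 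The grid axioms are arranged by translations exactly as in the nonsingular case, and the local tiles produced are tiles of $\mathbf{T}_s$ because $(\cdot)^\vee$ carries $t_\bullet^*$ to the rectilinear pieces of Figure~\ref{fig:flipsingulartile}.

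\emph{Injectivity.} Suppose $G_1,G_2\in\SG$ with $\|G_1\|_\SB=\|G_2\|_\SB$ in $\SB=\mathbf{SB}/\{(BrC),(BrE)\}$. Since un-flipping is well defined, it suffices to realize, by our grid moves, every modification of the rectilinear diagram $G_i^\vee$ that leaves $sl(G_i^\vee)$ unchanged in $\SB$. These modifications split into two families. First, those involving only the braid-group part: the relations of $\mathbf{B}_n$, conjugation and exchange among the $\sigma_i^{\pm1}$. These are handled by commutations, translations and $(NE)/(SE)$ precisely as in \cite{NT}; in particular a $(Br0)$-cancellation becomes a commutation followed by an $(NE)$- or $(SE)$-destabilization. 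Second, those that touch a singular letter: the genuinely singular relations $(BrF)$ and $\sigma_i\sigma_j\tau_i=\tau_j\sigma_i\sigma_j$ of $(Br2)$, conjugation and exchange involving a $\xi_i$, and the choice of rectilinear picture for a singular generator (equivalently, the orientation label carried by the corresponding singular tile). The relations $(BrF)$ and the singular part of $(Br2)$ are realized by $(Flype)$ together with commutations; conjugation is realized by translations (Figures~\ref{fig:HTransBraid} and \ref{fig:VTransBraid}); exchange involving a singular letter is realized by the augmented sequence of Figure~\ref{fig:Cm_V}, in which $(BrF)$ and $(Br2)$ replace the bare column commutation used in \cite[Proposition~7]{NT}; and the orientation freedom of $t_\bullet^*$ is realized by $(Rot_\pm^*)$ and $(Swirl)$, which by Proposition~\ref{prop:rotations}(1) carry the labels $N$, $S$ and $W$ to the label $E$ without changing the flip. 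Collecting these, $G_1$ and $G_2$ become related by translations, commutations, $(NE)$, $(SE)$, $(Flype)$, $(Swirl)$ and $(Rot_\pm^*)$, which is injectivity.

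\emph{Main obstacle.} The delicate point is the singular half of the injectivity step: showing that the exchange move, and the passage of a reversed segment or an ordinary crossing past a singular tile, can \emph{always} be carried out with the listed moves --- the diagrams of Figure~\ref{fig:Cm_V} cover the generic configuration, but one must run the remaining case analysis over the four orientations $t_\bullet^N,t_\bullet^E,t_\bullet^W,t_\bullet^S$ and over the positions of the reversed horizontal segments --- and, correlated with this, that the orientation ambiguity of the singular tile is \emph{exactly} absorbed by $(Rot_\pm^*)$ and $(Swirl)$, so that no additional move is needed. The nonsingular scaffolding carries over verbatim from \cite{NT}; it is precisely the interaction of the singular tile with the braid-theoretic moves that requires care.
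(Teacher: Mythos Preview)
Your proposal is correct and follows essentially the same route as the paper: well-definedness from Propositions~\ref{prop:basicmoves}, \ref{prop:invariantunderstabilizations}(1), \ref{prop:rotations}(1); surjectivity via the rectilinear closure of a word in $\sigma_i^{\pm1},\xi_i$; and injectivity by matching each relation $(Br0)$--$(BrF)$, $(BrC)$, $(BrE)$ of $\SB$ with a grid move. The only organizational difference is that the paper first uses $(Swirl)$ and $(Rot_\pm^*)$ to retract $\tilde\SG$ onto the subset $\tilde{\SG^E}$ where every singular tile is $t_\bullet^E$, and then proves the bijection $\tilde{\SG^E}/\{(NE),(SE),(Flype)\}\to\SB$; this front-loads the orientation bookkeeping and makes the remaining case analysis (your ``main obstacle'') lighter, since only $t_\bullet^E$ ever appears.
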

\begin{proof}
Let $\SG^E$ be the subset of $\SG$ consisting of singular grid diagrams extended with only the singular tile $t_\bullet^E$,
and let
\[
\tilde{\SG^E}=\SG^E/\{(Tr_H), (Tr_V),(Cm_H),(Cm_V)\}\subset\tilde\SG.
\]
Then the quotient map by $(Swirl)$ and $(Rot_\pm^*)$ induces the maps $r:\SG\to\SG^E$
and $\tilde r:\tilde\SG\to\tilde{\SG^E}$ which are retractions.
Therefore it suffices to show that $\|\cdot\|_\SB$ induces the bijection
\[
\tilde{\SG^E}/\{(NE),(SE),(Flype)\}\to\SB,
\]
which is well-defined by Proposition~\ref{prop:basicmoves}, Proposition~\ref{prop:invariantunderstabilizations}~(1) and Proposition~\ref{prop:rotations}~(1).

The rest of the proof is essentially the same as the proof of \cite[Proposition~7~(3)]{NT}, and we will construct an inverse map as follows. Let $\beta\in\SB$ be given by a word on $\{\sigma_i,\sigma_i^{-1},\xi_i|1\le i<n\}$.
Then we have a rectilinear diagram $\bar\beta\in\bar{\mathbf{SB}}$ obtained from $\beta$ as described in Section~\ref{sec:rectilinear}.

By perturbing horizontal segments of $\bar\beta$ slightly, we obtain another rectilinear diagram $\bar\beta'$ so that no horizontal and vertical segments are colinear.
We close $\bar\beta'$ in the {\em rectilinear way} as shown in Figure~\ref{fig:rectilinear_closure} to obtain a singular grid diagram $G(\beta)\in \SG$. Then it is straightforward that $\|G(\beta)\|_\SB = \beta$. Therefore $\|\cdot\|_\SB$ is surjective.

\begin{figure}[ht]
\[
\xymatrix{
\vcenter{\hbox{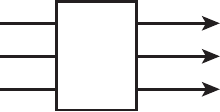}}\ar@{=>}[r]^-{Perturb}\quad&\quad
\vcenter{\hbox{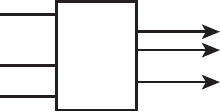}}\ar@{=>}[r]^-{Close}\quad&\quad
\vcenter{\hbox{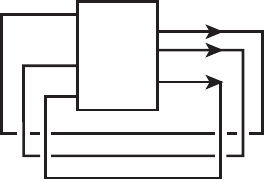}}\in \SG
}
\]
\caption{A rectilinear closure $G(\beta)$ of $\bar\beta'$}
\label{fig:rectilinear_closure}
\end{figure}

Moreover, all equivalence relations of $\mathbf{SB}$ together with conjugations and exchange moves are coming from translations and commutations up to $(NE), (SE)$ and $(Flype)$.
\begin{align*}
(Br0) &\equiv (Cm_V)&
(Br1) &\equiv (Cm_H)&
(Br2) &\equiv (Cm_H)\text{ or }(CM_V)\\
(BrF) &\equiv (Flype) &
(BrC) &\equiv (Tr_H) &
(BrE) &\equiv (Tr_V)
\end{align*}

Therefore if $\beta_1=\beta_2\in\SB$, then $G(\beta_1)$ and $G(\beta_2)$ are equivalent in $\tilde{SG}$ up to $(NE), (SE)$ and $(Flype)$. This implies the injectivity of $\|\cdot\|_\SB$ and completes the proof.
\end{proof}

\begin{prop}\label{prop:SGSL}
The map $\|\cdot\|_\SL$ induces the bijection
\[
\|\cdot\|_\SL:\tilde\SG/\{(NE), (SW), (Rot_\pm)\}\to\SL.
\]
\end{prop}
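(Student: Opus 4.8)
The plan is to mimic the proof of Proposition~\ref{prop:SGSB}, with singular Legendrian links in place of singular braids and with the combinatorial description of equivalence in $\SL$ (Proposition~\ref{prop:frontmove}) in place of the monoid presentation of $\mathbf{SB}$. First I would record well-definedness: by Proposition~\ref{prop:basicmoves}, Proposition~\ref{prop:invariantunderstabilizations}~(2) and Proposition~\ref{prop:rotations}~(2) the map $\|\cdot\|_\SL$ is invariant under translations, commutations, $(NE)$, $(SW)$ and $(Rot_\pm)$, so it descends to a map on $\tilde\SG/\{(NE),(SW),(Rot_\pm)\}$.

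For surjectivity, given $L\in\SL$ I would start from a regular front projection $\pi_F(L)$. By regularity the front near each singular point is one of the four local pictures $\pi_F(L_\bullet^*)$ of Figure~\ref{fig:singularpoints}, and after rotating the whole front $45^\circ$ (clockwise) these are exactly the four oriented singular tiles $t_\bullet^*$. Away from the singular points the usual argument of the nonsingular case shows that a small Legendrian isotopy brings the front into the shape of a $45^\circ$-rotated grid diagram, with left and right corners becoming cusps and up and down corners becoming smoothings; a further small perturbation arranges that no vertical or horizontal line contains more than two corners. This produces $G\in\SG$ with $\|G\|_\SL=L$, so $\|\cdot\|_\SL$ is onto, and also supplies a candidate inverse $L\mapsto G$.

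For injectivity I would show this candidate inverse is well-defined, i.e. that $\|G_1\|_\SL=\|G_2\|_\SL$ in $\SL$ forces $G_1$ and $G_2$ to be related by the allowed grid moves. By Proposition~\ref{prop:frontmove} the two regular fronts are connected by planar isotopies and finitely many moves $(LRM1)$--$(LRM6)$ and their reflections, so it suffices to realize each such move --- performed between fronts that are already in grid form --- on the grid side using only translations, commutations, $(NE)$, $(SW)$ and $(Rot_\pm)$. Planar isotopy and the nonsingular moves $(LRM1)$--$(LRM3)$ are handled verbatim as in the nonsingular theorem of Proposition~\ref{prop:COSTNT}, using only translations, commutations and $(NE)$/$(SW)$-(de)stabilizations (and, by Proposition~\ref{prop:stabilizations}, never $(NW)$ or $(SE)$, which would change the Legendrian type). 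The move $(LRM6)$ is exactly $(Rot_\pm)$ by Proposition~\ref{prop:rotations}~(2), after using translations and commutations to make the relevant singular tile occupy an isolated small sub-grid; $(LRM4)$ reduces to $(LRM6)$ and a planar isotopy by the Remark following Proposition~\ref{prop:frontmove}; and $(LRM5)$ is obtained by first commuting the strand in question next to the singular tile and then checking the local equivalence directly on the corresponding small grid piece (which uses only commutations and $(NE)$/$(SW)$).

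The hard part is the bookkeeping in the injectivity step: before and after each Reidemeister move one must restore the front to grid form, which forces one to manufacture auxiliary translations and commutations, and in $\SG$ these are more constrained than in $\G$ (admissible decompositions, non-interleaving columns and rows) precisely because of the singular tiles. Verifying that enough preparatory moves are always available, and --- most importantly --- that none of the singular moves $(LRM4)$--$(LRM6)$ ever requires a grid move outside $\{(NE),(SW),(Rot_\pm)\}$, is where the real work lies.
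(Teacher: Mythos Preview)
Your proposal is correct and follows essentially the same approach as the paper: reduce injectivity to realizing each Legendrian Reidemeister move $(LRM1)$--$(LRM6)$ by grid moves from the allowed list. The paper's proof is terser---it simply records the direct correspondences $(LRM1)\leftrightarrow(NE)$ or $(SW)$, $(LRM2),(LRM3),(LRM5)\leftrightarrow(Cm_*)$, and $(LRM4),(LRM6)\leftrightarrow(Rot_\pm)$---and in particular treats $(LRM5)$ as a single commutation rather than needing auxiliary $(NE)/(SW)$ moves, and does not separately discuss the bookkeeping about admissible decompositions that you flag as ``the hard part.''
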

\begin{proof}
It is obvious that $\|\cdot\|_\SL$ is surjective. Therefore for the injectivity, it suffices to show that each of the moves $(LRM1)\sim(LRM6)$ comes from (a sequence of) moves $(NE)$, $(SW)$ and $(Rot_\pm)$.
Notice that the moves $(LRM1)\sim(LRM6)$ can be viewed as compositions of moves on a grid diagram and the map $\|\cdot\|_\SL$ as follows.
\begin{align*}
(LRM1)=\vcenter{\hbox{\includegraphics{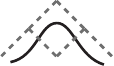}}}
&\longleftrightarrow\vcenter{\hbox{\includegraphics{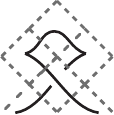}}}
=\|(NE)\|_\SL \text{ or }\|(SW)\|_\SL\\
(LRM2)=\vcenter{\hbox{\includegraphics{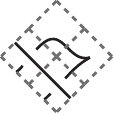}}}
&\longleftrightarrow\vcenter{\hbox{\includegraphics{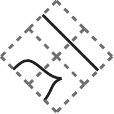}}}
=\|(Cm_*)\|_\SL\\
(LRM3)=\vcenter{\hbox{\includegraphics{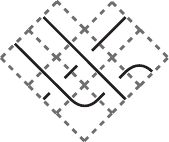}}}
&\longleftrightarrow\vcenter{\hbox{\includegraphics{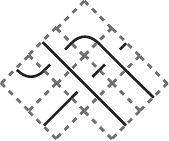}}}
=\|(Cm_*)\|_\SL\\
(LRM4)=\vcenter{\hbox{\includegraphics{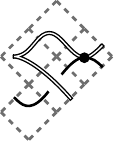}}}
&\longleftrightarrow\vcenter{\hbox{\includegraphics{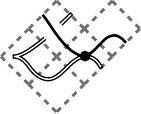}}}
=\|(Rot_\pm)\|_\SL\\
(LRM5)=\vcenter{\hbox{\includegraphics{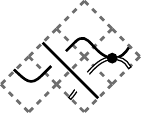}}}
&\longleftrightarrow\vcenter{\hbox{\includegraphics{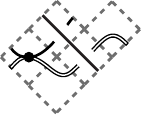}}}
=\|(Cm_*)\|_\SL\\
(LRM6)=\vcenter{\hbox{\includegraphics{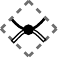}}}
&\longleftrightarrow\vcenter{\hbox{\includegraphics{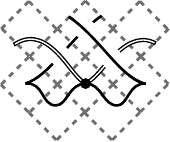}}}
=\|(Rot_\pm)\|_\SL
\end{align*}

Therefore $L=\|G\|_\SL$ and $L'=\|G'\|_\SL$ in $\SL$ are equivalent if and only if $G$ and $G'$ in $\tilde\SG$ are the same up to $(NE)$, $(SW)$ and $(Rot_\pm)$ as desired.
\end{proof}

We give a proof of the main theorem as follows.

\begin{proof}[Proof of Theorem~\ref{thm:maintheorem}]
The first and second statements of the theorem about $\SB$ and $\SL$ are already proved in Proposition~\ref{prop:SGSB} and in Proposition~\ref{prop:SGSL}, respectively.

Moreover, by Proposition~\ref{prop:SLSK}, Theorem~\ref{thm:SLST} and Theorem~\ref{thm:STSK}, we have the following commutative diagram.
\[
\xymatrix@C=6pc@R=4pc{
\tilde\SG\ar[r]^-{\|\cdot\|_\SL}\ar[dr]_-{\|\cdot\|_\ST}&\SL \ar[dr]^{/\{(LS_\pm),(LF)\}}\ar[d]|{/\{(LS_-),(LF)\}}\\
& \ST\ar[r]_-{/\{(TS)\}} & \SK.
}
\]

Since $(LS_+)$, $(LS_-)$, $(TS)$ and $(LF)$ correspond to $(NW)$, $(SE)$, $(NW)$ and $(Flype)$ by Proposition~\ref{prop:stabilizations}, respectively, we have 
\begin{align*}
\ST &\stackrel{(\cdot)^+}{=\joinrel=\joinrel=\joinrel=\joinrel=\joinrel=} \SL /\{(LS_-),(LF)\}\\
&\stackrel{(\|\cdot\|_\SL)^+}{=\joinrel=\joinrel=\joinrel=\joinrel=\joinrel=}\left(\tilde\SG/\{ (NE), (SW), (Rot_\pm)\}\right)\big/\{(SE), (Flype)\}
\\
&\stackrel{\|\cdot\|_\ST}{=\joinrel=\joinrel=\joinrel=\joinrel=\joinrel=} \tilde\SG/\{(NE), (SE), (SW), (Flype), (Rot_\pm)\}
\end{align*}
and
\begin{align*}
\SK &\stackrel{\|\cdot\|_\SK}{=\joinrel=\joinrel=\joinrel=\joinrel=\joinrel=} \SL /\{(LS_\pm),(LF)\}\\
&\stackrel{\big\|\|\cdot\|_\SL\big\|_\SK}{=\joinrel=\joinrel=\joinrel=\joinrel=\joinrel=} \left(\tilde\SG/\{ (NE), (SW), (Rot_\pm)\}\right)\big/\{(NW), (SE), (Flype)\}\\
&\stackrel{\|\cdot\|_\SK}{=\joinrel=\joinrel=\joinrel=\joinrel=\joinrel=} \tilde\SG/\{(NE), (NW), (SE), (SW), (Flype), (Rot_\pm)\}
\end{align*}
as desired.
\end{proof}

\end{document}